\documentclass[11pt]{amsart}
\usepackage{babel}
\usepackage[all]{xy}
\usepackage{amssymb}
\usepackage{enumitem}
\usepackage{amsthm}
\usepackage{colortbl}
\usepackage[dvipsnames]{xcolor}
\usepackage{hyperref}
\usepackage{amsmath}
\usepackage{amscd,enumitem}
\usepackage{verbatim}
\usepackage{eurosym}
\usepackage{float}
\usepackage{cleveref}
\usepackage{color}
\usepackage{rotating}
\usepackage{dcolumn}
\usepackage[mathscr]{eucal}
\usepackage[all]{xy}
\usepackage{bbm}
\usepackage{makecell}
\usepackage{wasysym}
\usepackage[margin=1in]{geometry} 
\usepackage{tikz}
\usepackage{graphicx}
\newcommand\myhdots{\hbox to 1.5em{.\hss.\hss.}}
\usetikzlibrary{automata,arrows,positioning,calc}
\DeclareFontFamily{U}{wncy}{}
    \DeclareFontShape{U}{wncy}{m}{n}{<->wncyr10}{}
    \DeclareSymbolFont{mcy}{U}{wncy}{m}{n}
    \DeclareMathSymbol{\Sh}{\mathord}{mcy}{"58} 
\newtheorem*{thm*}{Theorem}
\newtheorem*{conj*}{Conjecture}

\newtheorem*{remark}{Remark}

\newtheorem{theorem}{Theorem}[section]

\newtheorem{lemma}[theorem]{Lemma}

\newtheorem{proposition}[theorem]{Proposition}

\newtheorem{corollary}[theorem]{Corollary}

\theoremstyle{definition}
\newtheorem{definition}[theorem]{Definition}
\newtheorem{example}[theorem]{Example}

\newtheorem*{question}{Question}

\newcommand{\NN}{\mathbb{N}}

\newcommand{\ceil}[1]{\lceil #1 \rceil}

\newcommand{\floor}[1]{\lfloor #1 \rfloor}

\numberwithin{equation}{section}

\renewenvironment{quote}%
  {\list{}{\leftmargin=0.25in}\item[]}%
  {\endlist}

  \usepackage{tikz}

\hypersetup{
    colorlinks=true,
    linkcolor=blue,
    citecolor=red
    }

\usetikzlibrary{calc}
\tikzset{vtx/.style={circle, fill, inner sep=1.5pt}}
\tikzset{openvtx/.style={circle, draw, inner sep=1.5pt}}

\usepackage[dvipsnames]{xcolor}
\usepackage{enumitem}
\usepackage{tikz}
\usetikzlibrary{automata,arrows,positioning,calc}
\DeclareFontFamily{U}{wncy}{}
    \DeclareFontShape{U}{wncy}{m}{n}{<->wncyr10}{}
    \DeclareSymbolFont{mcy}{U}{wncy}{m}{n}
    \DeclareMathSymbol{\Sh}{\mathord}{mcy}{"58} 

\usetikzlibrary{calc}
\tikzset{vtx/.style={circle, fill, inner sep=1.5pt}}
\tikzset{smallvtx/.style={circle, fill, inner sep=1.5pt, scale=0.9}}
\tikzset{openvtx/.style={circle, draw, inner sep=1.5pt}}

\usetikzlibrary{decorations.pathmorphing, shapes, patterns}

\usepackage{diagbox}

\title{Counting numerical semigroups by Frobenius number, multiplicity, and depth}
\author{Sean Li}
\address{Department of Mathematics, Massachusetts Institute of Technology, Cambridge, MA 02139, USA}
\email{seanjli@mit.edu}
\makeatletter
\renewcommand{\paragraph}[1]{%
  \par
  \addvspace{\medskipamount}
  \textit{#1\@addpunct{.}}\enspace\ignorespaces
}
\makeatother

\makeatletter
\renewcommand{\subparagraph}[1]{%
  \par
  \addvspace{\medskipamount}

  \noindent
  \textbf{#1\@addpunct{.}}\enspace\ignorespaces
}
\makeatother

\linespread{1}

\usepackage{comment}

\usepackage{epigraph}
\usepackage{stmaryrd}
\usetikzlibrary{patterns}
\usetikzlibrary{decorations.pathmorphing}

\newcommand{\fr}{\operatorname{Fr}}
\newcommand{\ah}{\operatorname{st}}
\newcommand{\MED}{\operatorname{MED}}

\newcommand{\eps}{\varepsilon}
\renewcommand{\hom}{\operatorname{hom}}
\newcommand{\cK}{\mathcal{K}}

\newcommand{\mult}{\operatorname{Mult}}

\usepackage[textwidth=25mm]{todonotes}

\newcommand{\vocab}{\emph}

\usepackage{thmtools}

\newlength{\hatchspread}
\newlength{\hatchthickness}
\newlength{\hatchshift}
\newcommand{\hatchcolor}{}
\tikzset{hatchspread/.code={\setlength{\hatchspread}{#1}},
         hatchthickness/.code={\setlength{\hatchthickness}{#1}},
         hatchshift/.code={\setlength{\hatchshift}{#1}},
         hatchcolor/.code={\renewcommand{\hatchcolor}{#1}}}
\tikzset{hatchspread=3pt,
         hatchthickness=0.4pt,
         hatchshift=0pt,
         hatchcolor=black}
\pgfdeclarepatternformonly[\hatchspread,\hatchthickness,\hatchshift,\hatchcolor]
   {custom north west lines}
   {\pgfqpoint{\dimexpr-2\hatchthickness}{\dimexpr-2\hatchthickness}}
   {\pgfqpoint{\dimexpr\hatchspread+2\hatchthickness}{\dimexpr\hatchspread+2\hatchthickness}}
   {\pgfqpoint{\dimexpr\hatchspread}{\dimexpr\hatchspread}}
   {
    \pgfsetlinewidth{\hatchthickness}
    \pgfpathmoveto{\pgfqpoint{0pt}{\dimexpr\hatchspread+\hatchshift}}
    \pgfpathlineto{\pgfqpoint{\dimexpr\hatchspread+0.15pt+\hatchshift}{-0.15pt}}
    \ifdim \hatchshift > 0pt
      \pgfpathmoveto{\pgfqpoint{0pt}{\hatchshift}}
      \pgfpathlineto{\pgfqpoint{\dimexpr0.15pt+\hatchshift}{-0.15pt}}
    \fi
    \pgfsetstrokecolor{\hatchcolor}
    \pgfusepath{stroke}
   }

\begin{document}
\maketitle

\begin{abstract}
  In 1990, Backelin showed that the number of numerical semigroups with Frobenius number $f$ approaches $C_i \cdot 2^{f/2}$ for constants $C_0$ and $C_1$ depending on the parity of $f$. In this paper, we generalize this result to semigroups of arbitrary depth by showing there are $\floor{(q+1)^2/4}^{f/(2q-2)+o(f)}$ semigroups with Frobenius number $f$ and depth $q$. More generally, for fixed $q \geq 3$, we show that, given $(q-1)m < f < qm$, the number of numerical semigroups with Frobenius number $f$ and multiplicity $m$ is
  \[\left(\left\lfloor \frac{(q+2)^2}{4} \right\rfloor^{\alpha/2} \left \lfloor \frac{(q+1)^2}{4} \right\rfloor^{(1-\alpha)/2}\right)^{m + o(m)}\]
where $\alpha = f/m - (q-1)$. Among other things, these results imply Backelin's result, strengthen bounds on $C_i$, characterize the limiting distribution of multiplicity and genus with respect to Frobenius number, and resolve a recent conjecture of Singhal on the number of semigroups with fixed Frobenius number and maximal embedding dimension.
\end{abstract}

\section{Introduction}

A \vocab{numerical semigroup} $\Lambda$ is a subset of the nonnegative integers $\mathbb{N}_0$ that has finite complement and is also an additive monoid, that is,~it contains 0 and is closed under addition. Numerical semigroups were first studied by Frobenius and Sylvester in the context of the Frobenius coin problem, and more recently have been of interest in constructions in algebraic geometry. See~\cite{kaplan_2017, rosales_2009} for more background. Given a numerical semigroup $\Lambda$, one can define its \vocab{genus} $g(\Lambda) := \#(\mathbb{N}_0 \setminus \Lambda)$; its \vocab{multiplicity} $m(\Lambda) := \min (\Lambda \setminus \{0\})$; its \vocab{conductor} $c(\Lambda) := \min \{ c \in \mathbb{N}_0 : c + \mathbb{N}_0 \subseteq \Lambda \}$; its \vocab{Frobenius number} $f(\Lambda) := c(\Lambda) - 1$; and its \vocab{depth} $q(\Lambda) = \ceil{c(\Lambda)/m(\Lambda)}$, the last of which was recently introduced by Eliahou and Fromentin~\cite{ef_2020}.

A central problem in the study of numerical semigroups is enumeration after ordering by a specific invariant. For instance, in 2011, Zhai~\cite{zhai_2012} proved that the number of numerical semigroups with genus $g$ is asymptotically $C\left( \frac{1+\sqrt5}{2} \right)^g$, where $C$ is a positive constant, settling two conjectures of Bras-Amor\'os~\cite{bras-amoros_2007}. In 1990, Backelin~\cite{backelin_1990} estimated the number $\fr(f)$ of numerical semigroups with Frobenius number $f$, answering a question of Wilf:

\begin{theorem}[{Backelin \cite[Prop.~1]{backelin_1990}}]
  There are constants $C_0$ and $C_1$ for which
  \[\fr(f) \sim \begin{cases}
    C_0 \cdot 2^{f/2} & \text{if }f \text{ even}, \\
    C_1 \cdot 2^{f/2} & \text{if }f \text{ odd}.
  \end{cases}\]

  \label{thm:back0}
\end{theorem}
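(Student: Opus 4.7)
The plan is to encode a numerical semigroup $\Lambda$ with Frobenius number $f$ by $S := \Lambda \cap [1, f-1]$, which must satisfy $f \notin S+S$ and be closed under addition within $[1, f-1]$. Let $N = \lfloor f/2 \rfloor$ and split $S$ into a lower half $A = S \cap [1,N]$ and an upper half $B = S \cap (N, f-1]$. The key observation is that any two elements of $(N, f-1]$ sum to strictly more than $f$, so the upper half is internally unconstrained. Taking $A = \emptyset$ and $B$ any subset of $(N, f-1]$ produces a valid semigroup $\{0\} \cup B \cup [f+1,\infty)$ with Frobenius number $f$, yielding $2^{\lceil f/2 \rceil - 1}$ examples and establishing the lower bound.

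For the upper bound, I would sum over admissible lower halves $A$ (those closed under addition in $[1,N]$, and avoiding $N$ when $f$ is even). Given such $A$, the compatible subsets $B$ must (a) contain $(A+A) \cap (N, f-1]$, (b) avoid $\{f-a : a \in A\}$, and (c) be closed under $b \mapsto b+a$ for each $a \in A$ as long as $b+a \leq f-1$. Letting $\beta(A)$ denote this count, we have $\fr(f) = \sum_A \beta(A)$. Each nonempty $A$ introduces forbidden positions and closure chains within $(N, f-1]$ that cut down $\beta(A)$, and I would show $\beta(A)/2^{\lceil f/2 \rceil - 1}$ decays quickly enough in $|A|$ that the total $\sum_A \beta(A)$ remains of order $2^{f/2}$, not larger.

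The main obstacle is upgrading this to the sharp asymptotic $\fr(f) \sim C_i \cdot 2^{f/2}$, rather than merely $\Theta(2^{f/2})$. My strategy is to group admissible $A$'s by a translation-invariant combinatorial type (for instance, by the reverse Ap\'ery structure of $A$ relative to its minimum element) and to show that, for each type, the normalized count $\beta(A)/2^{\lceil f/2 \rceil - 1}$ stabilizes as $f \to \infty$ along a fixed parity class. Summing termwise would then define the constants $C_0, C_1$. The parity dependence arises because $f$ even forbids $N = f/2$ from lying in $A$, while $f$ odd has no such restriction, slightly altering the local counts. The key technical step is a dominated-convergence-style argument justifying the interchange of limit and infinite sum over types, which requires a strong enough tail bound to control large $|A|$ uniformly in $f$.
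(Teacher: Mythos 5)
Your proposal takes a genuinely different route from the paper. The paper obtains Backelin's theorem as an essentially free corollary of its own machinery: \Cref{cor:small-depth} (which rests on \Cref{thm:frob-count}) shows that almost all numerical semigroups with Frobenius number $f$ have depth $2$ or $3$, and the exact counts of \Cref{prop:2-3-count} then give the asymptotic, with constants as in \Cref{cor:back-const}. Your approach works directly with $S = \Lambda \cap [1,f-1]$ and its bisection at $N = \lfloor f/2 \rfloor$, never touching Kunz words; this is much closer in spirit to Backelin's original argument.

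That said, there is a genuine gap in the convergence step. You claim $\beta(A)/2^{\lceil f/2 \rceil - 1}$ ``decays quickly enough in $|A|$'' for $\sum_A \beta(A)$ to stay of order $2^{f/2}$, but $|A|$ is not the right parameter: there are on the order of $\binom{N}{k}$ admissible $A$ with $|A|=k$, which a decay bound depending only on $|A|$ cannot overcome, and in any case $\beta(A)$ depends essentially on $\min A$ rather than $|A|$. The singleton $A=\{m\}$ with $m$ a bounded distance below $N$ already contributes a constant fraction of $2^{f/2}$, while $A=\{m\}$ with $m$ much smaller contributes far less because the closure map $b \mapsto b+m$ fragments $(N,f-1]$ into many short chains. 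The correct parameter is $\min A$ (equivalently $f - 2\min A$, i.e.\ the depth), and establishing geometric decay in this parameter --- in particular, that the contribution from $\min A$ well below $N$ (depth $\geq 4$) is $o(2^{f/2})$ --- is the technical heart of the whole theorem. In this paper that is exactly what \Cref{thm:frob-count} accomplishes, via the graph-homomorphism lemmas of Section~3; in Backelin's original it occupies several pages of ad hoc estimates. \Cref{rem:thisishard} even records that the naive bound one would write down from the pairwise constraints fails to give $o(2^{f/2})$ for the tail, so this step genuinely requires a new idea and cannot be waved through. Your ``translation-invariant type'' grouping for the main term is a reasonable device and parallels the paper's parametrization of depth-$3$ semigroups by stressed words $\cK_3(\ah,j)$, but the dominated-convergence argument cannot close without the tail estimate you have left as a black box.
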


A key ingredient in both of these proofs is that, after ordering, almost all numerical semigroups have small depth. Zhai shows that numerical semigroups with genus $g$ are almost all of depth 2 or 3; Backelin shows that numerical semigroups with Frobenius number $f$ are almost all of depth 2, 3, or 4. They then estimate the number of numerical semigroups with these depths. For instance, letting $\fr_q(f)$ denote the number of numerical semigroups with Frobenius number $f$ and depth $q$, Backelin shows the following enumerative result.

\begin{proposition}[{Backelin \cite[pg.~202]{backelin_1990}}]
  We have $\fr_2(f) + \fr_3(f) + \fr_4(f) \leq 3 \cdot 2^{\floor{(f-1)/2}}$.
\end{proposition}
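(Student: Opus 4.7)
The plan is to bound each of $\fr_2(f)$, $\fr_3(f)$, and $\fr_4(f)$ individually by $2^{\floor{(f-1)/2}}$ and then sum the three bounds.

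For depth $2$, I argue directly: a depth-$2$ semigroup $\Lambda$ with Frobenius number $f$ has multiplicity $m \geq \ceil{(f+1)/2}$, so $\Lambda \cap [1, f-1]$ lies entirely in the upper half $I := [\ceil{(f+1)/2}, f-1]$, a set of size $\floor{(f-1)/2}$. Since any two elements of $I$ sum to at least $f+1$, the semigroup closure is automatic, and $\Lambda$ is in bijection with the nonempty subsets of $I$ (the multiplicity being the minimum of the chosen subset). This yields $\fr_2(f) = 2^{\floor{(f-1)/2}} - 1$.

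For depth $q \in \{3, 4\}$, I fix the multiplicity $m$ in its admissible range and decompose $\Lambda \cap [m, f-1]$ into ``levels'' $L_k = \Lambda \cap [km, \min((k+1)m-1, f-1)]$ for $k = 1, \ldots, q-1$. The set $L_1$ is constrained by the ``pair rule'' (since $f \notin \Lambda$, no two elements of $\Lambda$ sum to $f$): in particular $f - m \notin L_1$, and no two $a, a' \in L_1$ satisfy $a + a' = f$. Higher levels $L_k$ must contain all forced elements (sums $a + a' \leq f - 1$ of elements of $\Lambda$ already chosen), while the remaining elements of $[km, (k+1)m - 1]$ are free. The pair constraint cuts the choices per pair from $4$ to $3$, and the forcing of $L_k$ by $L_{k-1} + m$ and by sums in $L_1$ further reduces the free choices. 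A careful enumeration gives $\fr_q(f) \leq 2^{\floor{(f-1)/2}}$ for $q = 3, 4$.

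Summing yields $\fr_2(f) + \fr_3(f) + \fr_4(f) \leq 3 \cdot 2^{\floor{(f-1)/2}}$. The main obstacle will be establishing the per-depth bound for $q \in \{3, 4\}$: a naive per-multiplicity estimate $\fr_q(f, m) \leq 2^{(f-5)/2}$ from the pair-and-forcing argument alone becomes loose when summed over the $\Theta(f)$ admissible multiplicities, and the tightening requires exploiting the interaction between $L_1$'s pair structure (stronger for small $m$) and the cascade of forced elements in $L_2$ (and $L_3$ for depth $4$), especially the sums $L_1 + L_1$ which impose many forcing relations when the pair-active region $[m+1, f-m-1]$ is large.
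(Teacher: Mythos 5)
Your depth-$2$ computation is correct and in fact yields the exact count $\fr_2(f) = 2^{\floor{(f-1)/2}} - 1$, consistent with \Cref{prop:2-3-count}(i). However, the plan of bounding each of $\fr_2(f)$, $\fr_3(f)$, $\fr_4(f)$ separately by $2^{\floor{(f-1)/2}}$ cannot succeed, because the target inequality $\fr_3(f) \leq 2^{\floor{(f-1)/2}}$ is \emph{false} for large $f$. By \Cref{cor:small-depth} and \Cref{cor:back-const}, for $f$ even one has $\fr(f) \sim C_0 \cdot 2^{f/2}$ and $\fr_2(f) \sim \tfrac12 \cdot 2^{f/2}$, hence $\fr_3(f) \sim (2C_0 - 1)\cdot 2^{\floor{(f-1)/2}}$; using the bound $C_0 > 1.26$ stated in \Cref{sec:direcs}, this exceeds $1.5 \cdot 2^{\floor{(f-1)/2}}$. (An analogous computation with $C_1$ handles odd $f$.) So the factor of $3$ in Backelin's bound has to be split unevenly --- $\fr_3$ genuinely consumes more than one unit of the budget while $\fr_4$ consumes far less --- and no uniform per-depth split is available.

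Separately, the depth-$3$ and depth-$4$ cases are not actually proved: you sketch a pair-plus-forcing scheme and then concede that the per-multiplicity estimate it yields is off by a factor of $\Theta(f)$ once summed over multiplicities. This is precisely the obstruction the paper flags in the remark following \Cref{prop:2-3-count}: the naive constraint $w_x + w_y \geq 3$ for $x+y=j$ gives only $\#\cK_3(\ah,j) \leq 8^{j/2 + O(1)}$, which leads to $\#\cK_3(f,\NN) \leq \tfrac16 f \cdot 2^{f/2}$ --- not even enough to reprove \Cref{thm:back0}, let alone the sharper constant in the present proposition. Backelin's actual argument depends on the strictly stronger estimate $\#\cK_3(\ah,\ell) \leq 2^{\floor{(3\ell-3)/2}} (11/12)^{\floor{(\ell-1)/2}}$; the extra $(11/12)^{\ell/2}$ factor is what turns the sum over multiplicities into a convergent geometric series rather than a linearly growing one. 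Your outline names the qualitative mechanism that should produce such savings but offers no quantitative bound, so the argument does not close. (For context, the paper cites this proposition from Backelin rather than proving it; the critique here is of the attempt on its own terms.)
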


It is natural to consider the number of numerical semigroups of larger depth, especially since such bounds would sharpen the results of Zhai and Backelin to lower-order terms. For instance, in 2023, Zhu~\cite{zhu_2022} proves the sharpest known asymptotics of the number of numerical semigroups by bounding the number of semigroups of depth at least $4$, refining estimates of Bacher~\cite{bacher_2019}.

In the realm of Frobenius numbers, Backelin~\cite[eq.~13, eq.~15]{backelin_1990} proves~\Cref{thm:back0} by bounding on the number of numerical semigroups with depths $3$, $4$, and at least $5$. In particular, Backelin shows that the number of depth-3 and depth-4 numerical semigroups with Frobenius number $f$ and multiplicity $m$ is at most $\frac12 (11/6)^{\floor{(f-3)/2}} (12/11)^m$. The results of Backelin have since remained the best bounds on the number of numerical semigroups of large depth, but unfortunately they are not sharp. The bottleneck seems to be that numerical semigroups of large depth are significantly more challenging to characterize than semigroups of depth 2 and 3.

To aid with computation, Blanco and Rosales~\cite{blanco_rosales_2012} develop algorithms to compute the number of numerical semigroups with fixed Frobenius number, and recently Branco, Ojeda, and Rosales~\cite{bor_2021} developed algorithms to compute the number of numerical semigroups with fixed multiplicity and Frobenius number. However, enumerations for small $f$ and $m$ say little about the behavior as these invariants grow large.

In this paper, we determine the number of numerical semigroups with fixed Frobenius number, multiplicity, and depth. Our results are sharp up to a subexponential factor, thus improving Backelin's bounds by an exponential factor for all $q \geq 3$ and yielding an alternative proof to \Cref{thm:back0}. We abbreviate the constant $c_q := \sqrt{\floor{(q+2)^2/4}}$ for $q \geq 1$.

\begin{theorem}
  \label{thm:frob-count}
  Fix $q \geq 3$, and let $(f,m)$ vary subject to the condition $(q-1)m < f < qm$. Then the number of numerical semigroups with multiplicity $m$ and Frobenius number $f$ is $\left(c_q^{\alpha} c_{q-1}^{1-\alpha}\right)^{m+o(m)}$, where $\alpha := f/m - (q-1)$.
\end{theorem}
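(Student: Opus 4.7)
The plan is to translate the problem into the language of Kunz (Apéry) coordinates: a numerical semigroup with multiplicity $m$ is equivalent to a tuple $(k_1,\dots,k_{m-1})\in\ZZ_{\geq 1}^{m-1}$ satisfying the Kunz inequalities $k_i+k_j+[i+j\geq m]\geq k_{(i+j)\bmod m}$ for all $i,j\in\{1,\dots,m-1\}$, where $k_i=(a_i-i)/m$ and $a_i$ is the minimum element of $\Lambda$ congruent to $i\pmod m$. Writing $r:=f-(q-1)m\in\{1,\dots,m-1\}$, the Frobenius condition becomes $k_r=q$, $k_i\leq q$ for $i\leq r$, and $k_i\leq q-1$ for $i>r$. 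The theorem then reduces to counting such tuples.

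The form of $c_q^2=\lfloor(q+2)^2/4\rfloor=\lfloor(q+2)/2\rfloor\cdot\lceil(q+2)/2\rceil=\max\{ab:a+b=q+2,\ a,b\in\ZZ_{\geq 0}\}$ strongly hints at a pairing argument: decompose $\{1,\dots,m-1\}$ into pairs $\{i,m-i\}$ and bound each pair's contribution. For the lower bound, I would produce many Kunz tuples by, for each pair, independently choosing $(k_i,k_{m-i})$ from an explicit set of admissible pairs---roughly $\lfloor(q+2)/2\rfloor\cdot\lceil(q+2)/2\rceil$ pairs when both indices are high ($\leq r$), $\lfloor(q+1)/2\rfloor\cdot\lceil(q+1)/2\rceil$ when both low, and a geometric-mean-sized set when mixed---then verify that any joint choice satisfies all Kunz inequalities simultaneously. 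Taking the product across pairs, together with forcing $k_r=q$ and checking boundary effects, should yield the lower bound $(c_q^\alpha c_{q-1}^{1-\alpha})^{m-o(m)}$.

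For the upper bound, I would use an entropy/induction argument on the same pair decomposition. The crucial inequalities are the self-conditions $2k_i+[2i\geq m]\geq k_{2i\bmod m}$, which propagate constraints through orbits of the doubling map $i\mapsto 2i\bmod m$ on $\ZZ/m\ZZ$. By ordering pairs carefully (e.g., along these orbits) and bounding the number of admissible $(k_i,k_{m-i})$ conditional on previously-chosen values, I aim to show that each pair contributes at most $\lfloor(q+2)^2/4\rfloor$, $c_q c_{q-1}$, or $\lfloor(q+1)^2/4\rfloor$ choices depending on whether it is high-high, mixed, or low-low. Summing across pairs gives the matching upper bound.

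The main obstacle is obtaining tight per-pair upper bounds: the Kunz inequalities are naturally stated on triples $(i,j,(i+j)\bmod m)$, so extracting a clean per-pair count requires separating each pair's degrees of freedom from cross-pair interactions. I expect that a careful combinatorial accounting, together with sub-exponential slack to handle exceptional orbits (short orbits of the doubling map and the pair containing $r$, whose contribution absorbs into the $o(m)$), will deliver the sharp bound. The clean identity of $\lfloor(q+2)^2/4\rfloor$ as a max product suggests the extremal configuration arises from a balanced split with $k_i+k_{m-i}$ concentrated near $q+1$; making this intuition into a rigorous ceiling on admissible pairs---despite the Kunz constraints coupling triples rather than pairs---is the technical heart of the argument.
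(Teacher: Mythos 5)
Your lower bound is in the right spirit: the paper's construction also achieves $(c_q^\alpha c_{q-1}^{1-\alpha})^{m+O(1)}$ by forcing $w_j=q$ at the appropriate position $j=\alpha m$ and then letting each $w_i$ range independently over an interval (of size $\floor{(q+2)/2}$, $\floor{(q+3)/2}$, $\floor{(q+1)/2}$, or $\floor{(q+2)/2}$ depending on where $i$ sits relative to $j$), chosen so that $w_x+w_y$ is always large enough that all Kunz inequalities are automatic. The balanced-split heuristic behind $c_q^2=\floor{(q+2)/2}\cdot\lceil(q+2)/2\rceil$ is exactly the right one, and making your pairing version rigorous should not be hard, though the paper's ``independent intervals'' formulation avoids having to argue about the pairing of indices $\{i,m-i\}$, for which there is no direct Kunz constraint (the index $i+j=m$ corresponds to $0\pmod m$ and is vacuous).

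The upper bound is where the gap lies, and it is substantial. Your proposed mechanism, the self-conditions $2k_i + [2i\ge m]\ge k_{2i\bmod m}$ propagated along doubling orbits, is far too weak: for a typical Kunz word of depth $q$, only $o(m)$ entries equal $q$, so for most $i$ the quantity $k_{2i\bmod m}$ is small and the self-condition constrains $k_i$ not at all. The genuine constraints come from the relations $w_x+w_y\ge q$ that appear \emph{only} when $w_{x+y}=q$, so to get any leverage you must first locate a \emph{cluster} of positions carrying the value $q$. The paper handles this by a dichotomy: either no block of $\eps\ell$ consecutive positions contains more than $\sqrt{\ell}$ copies of $q$ (in which case the word is essentially $(q-1)$-Kunz and induction on $q$ finishes), or there is a ``heavy'' chunk. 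In the heavy case, the constraints $w_x+w_y\ge q$ arising from the $n>\sqrt{\ell}$ many $q$'s in the chunk define a nearly $n$-regular graph $G$ on the other indices, and the count of admissible words is bounded by $\hom(G,H_q)$ for a threshold graph $H_q$. The crucial tool, which your sketch does not replace, is Zhao's theorem $\hom(G,H_q)\le\hom(K_{n,n},H_q)^{\#V(G)/(2n)}$, combined with the direct computation $\hom(K_{n,n},H_q)\le 2q\cdot c_q^{2n}$; this is what converts a large collection of coupled triple-wise inequalities into a clean per-vertex bound of $c_q$. Your proposal explicitly flags the difficulty of ``separating each pair's degrees of freedom from cross-pair interactions'' and defers it to a hoped-for ``careful combinatorial accounting,'' but that accounting is precisely the hard content, and without a Zhao-type graph-homomorphism inequality (or an equivalently strong substitute) a per-pair or per-orbit argument has no way to beat the trivial $q$ or $q-1$ choices per coordinate. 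To close the gap you would need, at minimum, (i) the heavy/light dichotomy and the induction on depth in the light case, and (ii) a global inequality controlling colorings of the ``sum graph'' $\{x\sim y: x+y\text{ lies where }w=q\}$ by threshold-graph vertices, which is exactly what~\Cref{thm:zhao-hom} together with~\Cref{lem:perturb} and~\Cref{lem:hom-cnt} provides.
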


As a corollary, we obtain sharp asymptotics on the number $\fr_q(f)$ of depth-$q$ numerical semigroups with Frobenius number $f$, which generalizes Backelin's count of semigroups of depth $2$, $3$, and $4$ to arbitrary depth. We also obtain sharp asymptotics on the number of depth-$q$ numerical semigroups with multiplicity $m$, which we denote by $\mult_q(m)$. 

\begin{corollary}
  \label{cor:q-depth}
  We have $\fr_q(f) = \left(c_{q-1}\right)^{f/(q-1) + o(f)}$ for $q \geq 2$.
\end{corollary}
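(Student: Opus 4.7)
To prove the corollary for $q \geq 3$, my plan is to stratify $\fr_q(f)$ by multiplicity and apply \Cref{thm:frob-count} to each stratum, then optimize. A numerical semigroup of depth $q$ with Frobenius number $f$ has multiplicity $m$ satisfying $(q-1)m < f+1 \leq qm$, so $m$ varies over an interval of the form $(f/q, f/(q-1)]$ and contributes (ignoring boundary indices absorbed into lower-order terms) $N(f,m) = (c_q^{\alpha}c_{q-1}^{1-\alpha})^{m+o(m)}$ with $\alpha := f/m - (q-1) \in [0,1)$. Since $m = f/(q-1+\alpha) = O(f)$, I would rewrite this as $\log N(f,m) = f \cdot h(\alpha) + o(f)$, where
\[h(\alpha) := \frac{\alpha \log c_q + (1-\alpha) \log c_{q-1}}{q-1+\alpha}.\]

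The next step is to show that $h$ attains its maximum on $[0,1)$ at $\alpha = 0$, where it equals $\log c_{q-1}/(q-1)$. A direct differentiation gives
\[h'(\alpha) = \frac{(q-1)\log c_q - q\log c_{q-1}}{(q-1+\alpha)^2},\]
so $h$ has constant sign of derivative, and it is strictly decreasing precisely when $c_q^{q-1} < c_{q-1}^q$. I would verify this inequality by splitting on the parity of $q$ and using $c_q^2 = \lfloor (q+2)^2/4\rfloor$: the even-$q$ case reduces to $q^q > 4(q+2)^{q-2}$ and the odd-$q$ case to $(q+1)^{q+1} > 4(q+3)^{q-1}$, both of which can be checked for small $q$ by hand and deduced for large $q$ from $(1+2/q)^{q-2} \to e^2$. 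I expect this elementary but parity-sensitive verification to be the main obstacle.

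Given the inequality, $h$ is strictly decreasing on $[0,1)$, so each term $N(f,m)$ is at most $c_{q-1}^{f/(q-1)+o(f)}$, and summing the $O(f)$ contributions yields $\fr_q(f) \leq c_{q-1}^{f/(q-1)+o(f)}$. The matching lower bound comes from the single term near $m = \lfloor f/(q-1)\rfloor$, i.e., $\alpha \to 0$. Hence $\fr_q(f) = c_{q-1}^{f/(q-1)+o(f)}$ for $q \geq 3$.

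Finally, for $q = 2$ the target count reads $c_1^{f+o(f)} = 2^{f/2+o(f)}$. The upper bound $\fr_2(f) \leq \fr(f) = O(2^{f/2})$ is immediate from Backelin's \Cref{thm:back0}, and the matching lower bound can be extracted from the same theorem (alternatively, from an explicit construction producing $2^{m - O(1)}$ depth-$2$ semigroups with multiplicity $m = \lceil (f+1)/2 \rceil$), completing the proof.
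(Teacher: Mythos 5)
Your proposal is correct and follows essentially the same strategy as the paper: stratify by multiplicity, apply \Cref{thm:frob-count} to each value of $m$, show the exponent is monotone in $\alpha$, and conclude that the dominant contribution comes from $m$ near $f/(q-1)$. The only notable differences are cosmetic: the paper packages the monotonicity claim as \Cref{cor:cq-dec} (relying on \Cref{lem:annoying}), whereas you re-derive the equivalent inequality $c_q^{q-1} < c_{q-1}^q$ from scratch via the parity of $q$ -- this works, but you are reproving a special case (shift the index in \Cref{lem:annoying} and take $r=0$) of a lemma the paper already has available; and for the small depths the paper invokes the exact counts in \Cref{prop:2-3-count} for both $q=2$ and $q=3$, while you apply \Cref{thm:frob-count} for $q=3$ and fall back on Backelin's \Cref{thm:back0} for $q=2$ (your alternative explicit-construction route for $q=2$ is preferable, since the paper later reproves \Cref{thm:back0} from this corollary, and relying on it directly would be slightly circular in spirit, even though Backelin's result is an external citation).
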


\begin{corollary}
  \label{cor:q-depth-mult}
  We have $\mult_q(m) = c_q^{m + o(m)}$ for $q \geq 2$.
\end{corollary}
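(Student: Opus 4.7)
The plan is to decompose $\mult_q(m)$ by Frobenius number and apply \Cref{thm:frob-count} to each term. Writing $N(m,f)$ for the number of numerical semigroups with multiplicity $m$ and Frobenius number $f$, we have
\[\mult_q(m) = \sum_{f=(q-1)m}^{qm-1} N(m,f),\]
since depth $q$ corresponds to $(q-1)m < c \leq qm$, i.e., $(q-1)m \leq f \leq qm-1$. The case $q = 2$ requires a direct count since \Cref{thm:frob-count} is stated only for $q \geq 3$: a depth-$2$ semigroup with multiplicity $m$ is determined by $\Lambda \cap \{m+1,\ldots,2m-1\}$ (closure is automatic, as pairwise sums of elements at least $m$ lie in $\{2m, 2m+1, \ldots\} \subseteq \Lambda$), yielding $\mult_2(m) = 2^{m-1} - 1$ upon excluding the ordinary semigroup of depth $1$. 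This confirms $\mult_2(m) = c_2^{m+o(m)}$ since $c_2 = 2$.

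For $q \geq 3$, first observe that $N(m, (q-1)m) = 0$: since $m \in \Lambda$ forces $(q-1)m \in \Lambda$ by closure, $(q-1)m$ cannot be the Frobenius number. Thus the sum effectively runs over $f \in \{(q-1)m+1,\ldots,qm-1\}$, precisely the range where \Cref{thm:frob-count} applies, with $\alpha := f/m - (q-1) \in (0,1)$. For the upper bound, since $c_{q-1} < c_q$, the expression $c_q^\alpha c_{q-1}^{1-\alpha}$ is strictly increasing in $\alpha$ with supremum $c_q$, so \Cref{thm:frob-count} gives $N(m,f) \leq c_q^{m+o(m)}$; summing at most $m-1$ terms absorbs a factor of $m$ into the exponent, yielding $\mult_q(m) \leq c_q^{m+o(m)}$.

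The main technical subtlety---and what I expect to be the chief obstacle---arises in the lower bound, where one wants to single out a Frobenius number with $\alpha$ close to $1$. Taking $\alpha = 1 - 1/m$ directly is delicate, because the $o(m)$ error in \Cref{thm:frob-count} is not a priori uniform in $\alpha$. I would handle this via a two-step limit: for each fixed $\alpha \in (0,1)$, pick $f_m = \lfloor (q-1+\alpha)m \rfloor$ (in the valid range for all sufficiently large $m$) and apply \Cref{thm:frob-count} to obtain $\mult_q(m) \geq N(m, f_m) = (c_q^\alpha c_{q-1}^{1-\alpha})^{m+o(m)}$, so
\[\liminf_{m \to \infty} \frac{\log \mult_q(m)}{m} \geq \alpha \log c_q + (1-\alpha)\log c_{q-1}.\]
Letting $\alpha \to 1$ yields $\liminf \geq \log c_q$, which combined with the upper bound gives $\mult_q(m) = c_q^{m+o(m)}$, completing the proof.
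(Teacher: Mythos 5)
Your proof is correct and takes essentially the same route as the paper: decompose $\mult_q(m)$ over the admissible Frobenius numbers, apply Theorem~\ref{thm:frob-count}, and observe the summand is maximized near $\alpha = 1$. The paper's version is terser---it substitutes $\alpha = (m-1)/m$ directly and does not spell out $q=2$, which it dispatches in the remark preceding the proof of Theorem~\ref{thm:frob-count}---whereas your two-step limit and explicit $q=2$ base case are sound extra care (though the intended reading of Theorem~\ref{thm:frob-count}, with $(f,m)$ varying jointly, already asserts a uniform $o(m)$ term, so the direct substitution is also valid).
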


\Cref{thm:frob-count} also allows us to determine the asymptotic number of semigroups with maximal embedding dimension, settling a recent conjecture of Singhal~\cite{singhal_2022}. Let $\MED(f)$ denote the number of numerical semigroups with maximal embedding dimension and Frobenius number $f$.

\begin{corollary}
  \label{thm:med-count}
  We have $\MED(f) = 2^{f/3 + o(f)}$.
\end{corollary}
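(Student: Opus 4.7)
The plan is to reduce to Theorem~\ref{thm:frob-count} via the classical Kunz-shift correspondence for MED semigroups. Write the Apéry set of a semigroup $\Lambda$ with multiplicity $m$ in Kunz coordinates as $w_i = m h_i + i$ for $i \in \{1,\ldots,m-1\}$, with $h_i \geq 1$. The MED condition is equivalent to the \emph{strict} Kunz inequality $w_i + w_j > w_{(i+j)\bmod m}$ for all $i, j$ with $i + j \not\equiv 0 \pmod m$, and the substitution $\tilde h_i := h_i - 1$ transforms these strict inequalities into the ordinary (non-strict) Kunz inequalities for $(\tilde h_1, \ldots, \tilde h_{m-1})$ with $\tilde h_i \geq 0$. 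When all $\tilde h_i \geq 1$, this shifted tuple is the Kunz tuple of a genuine numerical semigroup $\Lambda'$ with the same multiplicity $m$ and Frobenius number $f - m$. Thus the shift gives a near-bijection between MED semigroups with multiplicity $m$ and Frobenius $f$ and ordinary numerical semigroups with multiplicity $m$ and Frobenius $f - m$.

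Next, I apply Theorem~\ref{thm:frob-count} to the shifted parameters. Writing $t := f/m$, $q := \lceil t \rceil$, and $\alpha := t - (q-1) \in (0,1)$, the shifted pair $(m, f-m)$ has depth $q - 1$ with the same $\alpha$, so the count is $\bigl(c_{q-1}^{\alpha} c_{q-2}^{1-\alpha}\bigr)^{m + o(m)}$. Summing over $m$ and optimizing, the base-$2$ exponent on $t \in (1, 3)$ simplifies to $(t-1)/(2t) = (f-m)/(2f)$, reflecting that the shifted semigroups have depth $\leq 2$ and are counted by Backelin's $\sim 2^{(f-m)/2}$ formula; so the supremum is attained at $t = 3$ (i.e., $m \approx f/3$), where the exponent equals $1/3$ since $c_2 = 2$. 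For $t > 3$ (where shifted semigroups have depth $\geq 3$) direct calculation shows the exponent strictly decreases. This yields $\MED(f) \leq 2^{f/3 + o(f)}$. For the matching lower bound, unshift the $2^{m + o(m)} = 2^{f/3 + o(f)}$ depth-$3$ numerical semigroups with multiplicity $m = \lfloor f/3 \rfloor$ and Frobenius $\approx 2m$ produced by Theorem~\ref{thm:frob-count}; each yields a distinct MED semigroup with Frobenius $f$.

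The main obstacle is handling the degenerate cases of the shift where some $\tilde h_i = 0$---equivalently, MED semigroups whose Apéry set meets the interval $(m, 2m)$. Such semigroups correspond under the shift to tuples representing numerical semigroups of smaller multiplicity dividing $m$, rather than of multiplicity $m$ exactly. I would bound their contribution by exploiting the structural closure property that $I := \{i : h_i = 1\}$ is closed under addition modulo $m$ (for sums below $m$), a direct consequence of the MED inequality applied to pairs in $I$, forcing $I$ to be a subgroup-like subset of $\mathbb{Z}/m$. A union bound over the limited configurations of $I$, together with induction on $m$ via the theorem, confines the degenerate contribution to a subexponential overcount and completes the proof.
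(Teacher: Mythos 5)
Your high-level strategy is the same as the paper's: exploit the bijection sending an MED semigroup of multiplicity $m$ and Frobenius number $f$ to the semigroup with Frobenius number $f-m$ obtained by subtracting $1$ from every Kunz coordinate (the paper's Lemma~\ref{lem:med-cnt}, due to Singhal), and then feed the result into Theorem~\ref{thm:frob-count}. Your computation of the non-degenerate exponent and the optimization at $t = f/m \to 3$ are also consistent with the paper's Propositions~\ref{prop:2-med}--\ref{prop:3-med} and Lemma~\ref{lem:4-med}.

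However, your treatment of the degenerate case has a genuine gap. First, the claim that when some $\tilde h_i = 0$ the shifted tuple represents a semigroup of multiplicity \emph{dividing} $m$ is false: the multiplicity of $\Lambda'$ is $\min I$ where $I = \{i : h_i = 1\}$, which need not divide $m$. For example, the Kunz word $(2,2,2,2,1)$ is a valid MED Kunz word with $m = 6$, and the shifted semigroup has multiplicity $5$. Second, while it is true that $I$ is closed under sums below $m$, this does not make $I$ ``subgroup-like'' or confine it to few configurations: any subset of $(m/2, m)$ is vacuously sum-closed in this sense, so there are at least $2^{m/2}$ candidate sets $I$, and a direct union bound over them is exponential, not subexponential. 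The set $I$ is simply $\Lambda' \cap [1, m-1]$ and is determined by $\Lambda'$, so the right thing to do is not to enumerate $I$ but to sum over the actual multiplicity $m'$ of $\Lambda'$ and bound each term via Theorem~\ref{thm:frob-count} (or, even simpler for the upper bound, drop the ``contains $m$'' constraint entirely and use $\fr^{\{m\}}(f-m) \le \fr(f-m)$), which is exactly what the paper's Propositions~\ref{prop:2-med}--\ref{prop:3-med} and Lemma~\ref{lem:4-med} do. You also omit the uniformity issue: the $o(f)$ terms in Theorem~\ref{thm:frob-count} and Corollary~\ref{cor:q-depth} are not uniform in $q$, so the tail over large $q$ must be controlled separately via a crude uniform bound such as Corollary~\ref{cor:stupid-f} before you can sum over all depths. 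Finally, your lower-bound construction needs a minor adjustment when $3 \mid f$, since then $f - m = 2m$ is a multiple of $m$ and cannot be a Frobenius number of a semigroup of multiplicity $m$; shifting $m$ by $1$ fixes this.
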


Our methods to count semigroups of large depth differ from the literature (for example,~from those in~\cite{zhai_2012}) in that they are very additive-combinatorial in nature. To bound the number of semigroups of fixed genus, Zhu~\cite{zhu_2022} uses Kunz words and reduces the problem to a graph homomorphism inequality due to Zhao~\cite{zhao_2011}. We extensively use and refine these methods to prove bounds in the case of fixed Frobenius number. Many of our results from these methods can be phrased in terms of coloring problems on Schur triples. This furthers the connection between numerical semigroups, graph homomorphisms, and additive combinatorics. 

Finally, we strengthen existing results on the distribution of multiplicity and genus with respect to Frobenius number, akin to Zhu's results for the distribution of multiplicity with respect to genus~\cite[\S 6]{zhu_2022}. 
Backelin~\cite{backelin_1990} shows that for a fixed Frobenius number $f$, almost all semigroups have multiplicity $f/2+o(f)$; Singhal~\cite{singhal_2022} shows that almost all semigroups with Frobenius number $f$ have genus $3f/4 + o(f)$. We strengthen these results by computing the exact limiting distributions in~\Cref{thm:mult-dist} and~\Cref{thm:gen-dist}. 
These results differ from the first half of the paper in that they arise from an analysis of semigroups of depths $2$ and $3$ instead of arbitrary depth, though some of the results depend on \Cref{thm:frob-count} for $q=3$. We also show that~\Cref{thm:frob-count} implies that almost all depth-$q$ numerical semigroups with Frobenius number $f$ have multiplicity close to $f/(q-1)$ for arbitrary depth $q \geq 3$, though we do not determine the exact limiting distributions in this case.

\subsection*{Outline} In~\Cref{sec:defs}, we introduce Kunz words and discuss the properties of the constants $c_q$. \Cref{sec:fixed-depth} establishes the key enumerative results on numerical semigroups of fixed depth which form the backbone of our paper. We apply these results in~\Cref{sec:count} to count numerical semigroups and semigroups with maximal embedding dimension. In~\Cref{sec:stats}, we prove some statistics on the distribution of multiplicity and genus. Finally, we discuss possible directions of research---sharpening asymptotics, enumerating other kinds of numerical semigroups, and studying general polychromatic Schur problems---in~\Cref{sec:direcs}.

\section*{Acknowledgements} 

This work was done at the University of Minnesota Duluth with support from Jane Street Capital, the National Security Agency (grant number H98230-22-1-0015), and the Izzo Fund. The author especially thanks Joseph Gallian for his mentorship and for nurturing a wonderful environment for research. We are also very grateful to Amanda Burcroff and Brian Lawrence for detailed revisions; to Colin Defant, Nathan Kaplan, Noah Kravitz, Deepesh Singhal, and Daniel Zhu for helpful discussions; and to the anonymous referees for their helpful comments. 

\section{Kunz words and the constants $c_q$}
\label{sec:defs}

In this section, we define Kunz words, prove some initial bounds on the number of Kunz words, and discuss some properties of the sequence $(c_q)_{q \geq 1}$.

\subsection{Kunz words}

Following Zhu~\cite[\S 3]{zhu_2022}, we define the Kunz word of a numerical semigroup.

\begin{definition}
  Let $\Lambda$ be a numerical semigroup with multiplicity $m$. The \vocab{Kunz word} $\cK(\Lambda)$ of $\Lambda$ consists of $m-1$ integers $w_1 \cdots w_{m-1}$ where $m \cdot w_i + i$ is the least element of $\Lambda$ that is equivalent to $i \pmod{m}$.
\end{definition}

All of our examples will have $w_i$ a digit from $1$ to $9$ for clarity, but the entries can be arbitrarily large in the general case.

\begin{example}
  If $\Lambda = \NN_0 \setminus \{1,2,3,4,5,7,9,10,13\}$, then $\cK(\Lambda) = 31221$.
\end{example}

Note that $\Lambda$ is generated by $\{m, m\cdot w_1 + 1, \dots, m \cdot w_{m-1} + (m-1)\}$ (known as the \vocab{Ap\'ery set} of $\Lambda$), so $\cK$ is a bijection between numerical semigroups of multiplicity $m$ and Kunz words of length $m-1$.

The quantity $\cK(\Lambda)$ is more traditionally regarded as a vector of scalars $(w_1, \dots, w_{m-1})$, known as the \vocab{Kunz coordinate vector} of $\Lambda$. Introduced by Kunz in 1987, the Kunz coordinate vector is typically regarded as a point in $(m-1)$-dimensional space; authors have used lattice methods, such as Ehrhart theory~\cite[Thm.~9]{kaplan_2011}, to count numerical semigroups. Recently, Bacher~\cite{bacher_2019} and Zhu~\cite{zhu_2022} regard $\cK(\Lambda)$ as a word whose entries are constrained by the Kunz conditions, then enumerate these combinatorially. We adopt the latter approach for this paper, and hence we opt to speak in terms of Kunz words (as opposed to coordinates) to emphasize our approach within our notation.

Since $\cK(\Lambda)$ determines $\Lambda$, we can read of the invariants of $\Lambda$ from $\cK(\Lambda)$ (for a proof of this result, see, for instance,~\cite[Prop.~3.4]{zhu_2022}). Let $[\ell] = \{1,\dots, \ell\}$.

\begin{proposition}[{Kunz~\cite[$\S2$]{kunz_1987}}]
  \label{prop:read-read-read}
  Let $\Lambda$ be a numerical semigroup and $\cK(\Lambda) = w_1 \cdots w_\ell$. Then
  \begin{itemize}
    \item $m = \ell+1$;
    \item $g = \sum_{i \in [\ell]} w_i$;
    \item $q = \max_{i \in [\ell]} w_i$;
    \item $f = (\ell+1)(q-1) + j$, where $j$ is maximal such that $w_j = q$.
  \end{itemize}
\end{proposition}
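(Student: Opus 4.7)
The plan is to unpack the definition of the Kunz word and read off each invariant directly, handling the four items in the order $m$, $g$, $f$, $q$ since the depth is defined in terms of the conductor $f+1$.

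The first two items are immediate. By definition the Kunz word has exactly one entry $w_i$ for each nonzero residue class $i \pmod m$, so its length is $\ell = m - 1$. For the genus, I would argue residue-class-by-residue-class: the elements of $\Lambda$ congruent to $i \pmod m$ (for $1 \le i \le m-1$) are precisely $m w_i + i, m w_i + i + m, m w_i + i + 2m, \ldots$, so the gaps in that class are the $w_i$ integers $i, m+i, \ldots, (w_i - 1)m + i$. The class $0 \pmod m$ contains $0, m, 2m, \ldots \in \Lambda$ and contributes no gaps. Summing over $i$ gives $g = \sum_{i \in [\ell]} w_i$.

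For the Frobenius number, I would observe that $f$ is the maximum over all nonzero residue classes of the largest gap in that class, namely $f = \max_{i \in [\ell]} \bigl((w_i - 1) m + i\bigr)$. Since $1 \le i \le m - 1 < m$, comparing pairs $\bigl((w_i - 1), i\bigr)$ lexicographically shows the maximum is attained at the largest value of $w_i$, and among indices achieving this value, at the largest $i$. Setting $q^* := \max_{i \in [\ell]} w_i$ and letting $j$ be the largest index with $w_j = q^*$, this gives $f = (q^* - 1)m + j = (\ell + 1)(q^* - 1) + j$, matching the stated formula once we verify $q = q^*$.

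For the depth, I would compute directly from the definition: $q = \lceil (f+1)/m \rceil = \lceil ((q^* - 1)m + j + 1)/m \rceil$. Since $1 \le j + 1 \le m$, the ceiling evaluates to $(q^* - 1) + 1 = q^*$, so indeed $q = q^*$. This entire argument is a routine unpacking, and there is no substantial obstacle; the only subtlety worth checking carefully is that the tiebreaking rule for $j$ (largest index among those with $w_j = q$) matches the one forced by maximizing $(w_i - 1)m + i$, which the computation above confirms.
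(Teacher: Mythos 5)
Your proof is correct and complete. The paper does not actually prove this proposition; it attributes the result to Kunz and points the reader to Zhu~\cite[Prop.~3.4]{zhu_2022} for a modern proof. Your direct computation — reading off each invariant residue-class-by-residue-class, observing that the gaps in class $i$ are exactly $i, m+i, \ldots, (w_i-1)m + i$, maximizing $(w_i - 1)m + i$ lexicographically in $(w_i, i)$, and then verifying $\lceil (f+1)/m\rceil = \max_i w_i$ — is precisely the standard argument given in those references, and your choice to derive $f$ before $q$ (reversing the order in the statement) is the right one, since the depth is defined via the conductor. One minor remark: you implicitly use that $w_i \geq 1$ for all $i$ (so every nonzero class contributes at least one gap and the lexicographic comparison is clean); this holds because $i < m = \min(\Lambda \setminus \{0\})$ forces $i \notin \Lambda$, and it is worth a half-sentence if you want the argument fully airtight.
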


We often refer to invariants of a numerical semigroup via its Kunz word; for instance, the genus of the Kunz word $31221$ is $9$. However, we never refer to the multiplicity of a Kunz word and instead speak of its length, which we denote $\ell := m-1$ for brevity.

\begin{definition}
  A Kunz word is \vocab{$q$-Kunz} if its depth (that is, its maximum entry) is at most $q$.
\end{definition}

It is natural to ask which words correspond to valid Kunz words of a numerical semigroup. This question was resolved by Kunz:

\begin{proposition}[Kunz~{\cite{kunz_1987,rosales_2002}}]
  A word $w_1 \cdots w_{\ell}$ with positive integer entries is a valid Kunz word if:
  \begin{itemize}
    \item $w_i + w_j \geq w_{i+j}$ for all $i,j$ with $i+j \leq \ell$; and
    \item $w_i + w_j + 1 \geq w_{i+j-\ell-1}$ for all $i,j$ with $i+j > \ell + 1$.
  \end{itemize}
\end{proposition}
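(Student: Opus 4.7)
The plan is to prove both directions by using the bijection between numerical semigroups of multiplicity $m$ and their Apéry sets. Given $\cK(\Lambda) = w_1 \cdots w_\ell$ with $\ell = m-1$, set $a_0 := 0$ and $a_i := m \cdot w_i + i$ for $1 \leq i \leq \ell$. By definition of the Kunz word, $a_i$ is the smallest element of $\Lambda$ congruent to $i \pmod m$, so
\[
  \Lambda = \bigsqcup_{i=0}^{\ell} \bigl( a_i + m \cdot \NN_0 \bigr).
\]
Conversely, for any tuple $(w_1,\dots,w_\ell)$ of positive integers the set displayed above has finite complement and contains $0$, so it suffices to characterize when it is closed under addition.

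First I would handle necessity. Assume $\Lambda$ is a numerical semigroup and fix $i,j \in [\ell]$. Then $a_i + a_j = m(w_i+w_j) + (i+j) \in \Lambda$. If $i + j \leq \ell$, the sum has residue $i+j \pmod m$ and therefore must be $\geq a_{i+j}$; this reduces to $w_i + w_j \geq w_{i+j}$. If $i+j = \ell + 1$, the sum is a positive multiple of $m$ and lies in $\Lambda$ automatically, giving no constraint. If $i+j > \ell+1$, the sum has residue $i+j-m$ and must be $\geq a_{i+j-m} = a_{i+j-\ell-1}$; this reduces to $w_i + w_j + 1 \geq w_{i+j-\ell-1}$.

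For sufficiency, I would check directly that these two conditions imply closure of $\Lambda$ under addition. Any element of $\Lambda$ can be written $a_i + m k$ with $i \in \{0,\dots,\ell\}$ and $k \in \NN_0$. Given two such elements $a_i + m k_1$ and $a_j + m k_2$, their sum equals $m(w_i + w_j + k_1 + k_2) + (i+j)$ (with the convention $w_0 = 0$). The case $i=0$ or $j=0$ is trivial, and for positive $i,j$ the three subcases $i+j < \ell+1$, $i+j = \ell+1$, and $i+j > \ell+1$ parallel the necessity argument: in the first the Kunz inequality $w_i+w_j \geq w_{i+j}$ lets the sum be rewritten as $a_{i+j}$ plus a nonnegative multiple of $m$; in the second the sum is a positive multiple of $m$ and hence lies in $a_0 + m\NN_0$; and in the third the inequality $w_i + w_j + 1 \geq w_{i+j-\ell-1}$ lets the sum be rewritten as $a_{i+j-\ell-1}$ plus a nonnegative multiple of $m$.

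There is no real obstacle here; the only care needed is bookkeeping in the arithmetic of the third case, where the ``$+1$'' arises precisely because carrying $i+j$ past $m$ contributes an additional $m$ to the coefficient. I would present the argument as a single lemma combining necessity and sufficiency, noting that each $w_i$ must be at least $1$ (so that $a_i > i \geq 0$ genuinely represents the residue $i$). Once closure is established, $\Lambda$ is a submonoid of $\NN_0$; its complement is finite because the $a_i$ cover all nonzero residues modulo $m$, so every sufficiently large integer lies in some $a_i + m\NN_0$.
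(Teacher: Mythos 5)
Your proof is correct. The paper states this proposition with citations to Kunz and to Rosales et al.\ and does not prove it internally, so there is no paper argument to compare against; what you have written out is the standard proof. You identify a numerical semigroup of multiplicity $m = \ell+1$ with the disjoint union of Ap\'ery translates $\bigsqcup_{i=0}^{\ell}(a_i + m\NN_0)$, where $a_i = mw_i + i$, and reduce closure under addition to the question of whether $a_i + a_j$ lands in $\Lambda$, which by minimality of $a_{i+j}$ (resp.\ $a_{i+j-m}$) in its residue class is equivalent to the two Kunz inequalities; the ``$+1$'' in the wraparound inequality is exactly the extra copy of $m$ absorbed when $i+j \geq m$. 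The bookkeeping in your sufficiency cases, including the benign case $i+j = m$ where the sum is a positive multiple of $m$, is correct, and you rightly note that positivity of the $w_i$ guarantees $m$ is the multiplicity and the complement is finite. One small remark: the proposition as stated in the paper is phrased as a one-directional ``if,'' so strictly only your sufficiency direction is required, but proving the converse as well is natural and costs nothing since it is the same arithmetic read backwards.
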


These inequalities are the \vocab{Kunz conditions}.

\begin{definition}
For a fixed length $\ell$, we say the word $w_1 \cdots w_{\ell}$ is \vocab{Kunz} if its entries satisfy the Kunz conditions.
\end{definition}

It will be useful to work exclusively with sets of Kunz words with fixed invariants. Hence, we let $\cK(f,\ell)$ denote the set of Kunz words with Frobenius number $f$ and length $\ell$. When a parameter is set to $\NN$, we let that parameter vary; for instance, $\cK(\NN,\ell)$ denotes the set of Kunz words with length $\ell$. In lieu of $\cK$, we use $\cK_q$ to denote $q$-Kunz words instead of general Kunz words. 

A word with depth $q$ and length $\ell$ has Frobenius number at most $\ah(q,\ell) := (\ell+1)q - 1$, so we occasionally write $\cK_q(\ah, \ell) := \cK_q(\ah(q,\ell), \ell)$ for the set of $q$-Kunz words with maximum Frobenius number. These correspond to words whose last entry is $q$.\footnote{For $q=3$, Zhu~\cite[Def.~3.7]{zhu_2022} calls these \emph{stressed} $q$-Kunz words, hence the notational choice of $\ah$.}

\begin{example}
  We have 
  \begin{itemize}
    \item $\cK(5,\NN) = \{11111, 12, 211, 22, 3\}$;
    \item $\cK_3(\NN,2) = \{11,12,21,22,23,31,32,33\}$; 
    \item $\cK(5,2) = \cK_3(5,2) = \{12, 22\}$.
    \item $\cK_3(\ah;2) = \{23, 33\}$.
  \end{itemize}
\end{example}

Note that we have $\fr(f) = \# \cK(f,\NN)$ and $\fr_q(f) = \# \cK_q(f,\NN)$, as well as $\mult_q(m) = \#\cK_q(\NN,m-1)$. (Here, we let $\#\mathcal{S}$ be the cardinality of the set $\mathcal{S}$.) Thus, we first focus on bounding the size of families of Kunz words like $\#\cK_q(f,\ell)$ in \Cref{sec:fixed-depth}, which in turn helps us bound the number of numerical semigroups with fixed invariants in \Cref{sec:count}. 

\subsection{Initial bounds on $\#\cK_q$}

We prove the following initial bounds on $\# \cK_q(\NN, \ell)$ and $\# \cK_q( f, \NN)$ which, while much weaker than our main results, helps with some technicalities in~\Cref{sec:count}. 

\begin{proposition}
  \label{prop:stupid-ell}
  We have $\# \cK_q(\NN,\ell) \leq q^{\ell}$.
\end{proposition}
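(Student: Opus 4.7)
The proposition is essentially an upper bound by pure word-counting, with no need to engage the Kunz conditions at all. My plan is to observe that a $q$-Kunz word $w_1 \cdots w_\ell$ has, by definition, all entries positive integers with $\max_i w_i \leq q$, so each entry $w_i$ lies in the set $\{1, 2, \ldots, q\}$.

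Hence there are at most $q$ choices for each of the $\ell$ entries, giving at most $q^\ell$ words of length $\ell$ with entries in $[q]$, and $\cK_q(\NN,\ell)$ is a subset of these (since Kunz words must additionally satisfy the Kunz conditions). The bound follows.

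The only potential subtlety is ensuring we have the right definition: the depth of a Kunz word is its maximum entry (as noted after \Cref{prop:read-read-read}), so ``depth at most $q$'' is literally the same as ``all entries at most $q$''. No obstacle is expected; this is a single sentence. The reason the paper includes it is presumably to have a cheap but nontrivially finite bound on $\#\cK_q(\NN,\ell)$ available when handling edge cases in \Cref{sec:count}, before the sharper asymptotics of \Cref{thm:frob-count} kick in.
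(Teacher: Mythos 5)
Your proof is correct and matches the paper's, which simply notes that $\cK_q(\NN,\ell) \subseteq [q]^\ell$ and hence has at most $q^\ell$ elements. Nothing further to add.
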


\begin{proof}
  This is immediate from the definition since $\cK_q(\NN,\ell) \subseteq [q]^\ell$.
\end{proof}

\begin{corollary}
  \label{cor:stupid-f}
  We have $\# \cK_q(f,\NN) \leq f\cdot q^{f/(q-1)}$. 
\end{corollary}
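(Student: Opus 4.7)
The plan is to decompose $\#\cK_q(f,\NN)$ according to the length $\ell$ of the Kunz word, apply \Cref{prop:stupid-ell} to each fixed-length piece, and use \Cref{prop:read-read-read} to constrain the range of $\ell$.

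First I would invoke \Cref{prop:read-read-read}: any Kunz word in $\cK_q(f,\NN)$ of length $\ell$ whose depth equals the ambient $q$ satisfies $f = (\ell+1)(q-1) + j$ for some $j \in [\ell]$. Since $j \geq 1$, this forces $(\ell+1)(q-1) \leq f - 1$, i.e.~$\ell < f/(q-1)$. In particular, there are fewer than $f$ valid values of $\ell$ that can contribute.

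Next, by \Cref{prop:stupid-ell}, for each such $\ell$ we have
\[
\#\cK_q(f,\ell) \leq \#\cK_q(\NN,\ell) \leq q^\ell \leq q^{f/(q-1)}.
\]
Summing over the (at most $f$) valid lengths yields
\[
\#\cK_q(f,\NN) = \sum_{\ell < f/(q-1)} \#\cK_q(f,\ell) \leq f \cdot q^{f/(q-1)},
\]
which is the desired bound.

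The only real content is the length bound $\ell < f/(q-1)$ extracted from \Cref{prop:read-read-read}; the rest is immediate from the trivial enumeration. The main subtlety I would flag is simply verifying that any contributions from words of depth smaller than $q$ also satisfy compatible length bounds, so that the overall envelope $f\cdot q^{f/(q-1)}$ is not exceeded; this can be checked directly using the same identity $f = (\ell+1)(q'-1)+j$ applied to each depth $q' \leq q$.
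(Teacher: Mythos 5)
Your main argument is correct and is essentially the same as the paper's: decompose by length, bound each length's contribution via \Cref{prop:stupid-ell}, and use \Cref{prop:read-read-read} to bound both the number of admissible lengths and each term $q^\ell$ by $q^{f/(q-1)}$.

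However, the concern you flag in the final paragraph is not a ``subtlety that checks out'' — it reflects a misreading of the statement, and the proposed check would in fact fail. For a fixed Frobenius number $f$, the length $\ell$ determines the depth uniquely as $\lceil (f+1)/(\ell+1)\rceil$, and the paper uses $\cK_q(f,\NN)$ to mean precisely the words of depth \emph{equal to} $q$ (this is what makes $\fr_q(f) = \#\cK_q(f,\NN)$); accordingly the paper's own proof restricts the sum to $\ell \in L_{f,q} = \left[\frac{f+1-q}{q},\frac{f+1-q}{q-1}\right) \cap \NN$. If one instead tried to include words of strictly smaller depth $q' < q$, the identity $f = (\ell+1)(q'-1)+j$ only gives $\ell < f/(q'-1)$, which is a much weaker constraint; for $q'=2$ one has $\ell$ up to about $f-1$, and then $q^\ell$ can be on the order of $q^{f}$, vastly exceeding the envelope $q^{f/(q-1)}$. (Equivalently, there are roughly $2^{f/2}$ depth-$2$ semigroups with Frobenius number $f$, which already exceeds $f\cdot q^{f/(q-1)}$ once $q$ is large.) So the statement is only true with ``depth exactly $q$,'' and your main proof already handles that case; the flagged verification should be dropped rather than carried out.
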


\begin{proof}
  A Kunz word with Frobenius number $f$ and length $\ell$ has depth $q$ if and only if we have $q = \ceil{(f+1)/(\ell+1)}$, that is,~when $\ell \in L_{f,q} := \left[ \frac{f+1-q}{q}, \frac{f+1-q}{q-1}\right) \cap \NN$. Hence, we can use~\Cref{prop:stupid-ell} to bound:
  \[
  \begin{aligned}
    \# \cK_q(f,\NN) &= \sum_{\ell \in L_{f,q}} \# \cK_q(f,\ell) 
    \leq \sum_{\ell \in L_{f,q}} \# \cK_q(\NN,\ell) \\
    &\leq \sum_{\ell \in L_{f,q}} q^{\ell}
    < \sum_{\ell \in L_{f,q}} q^{(f+1-q)/(q-1)} \\
    &= \frac1q \left| L_{f,q} \right| q^{f/(q-1)} 
  < f \cdot q^{f/(q-1)},
\end{aligned}
\]
  as desired.
\end{proof}

We also exactly count Kunz words of small depth, which mirrors Backelin's enumeration of numerical semigroups of small depth. This will later be useful to reprove~\Cref{thm:back0} and to determine distributions on numerical semigroups in~\Cref{sec:stats}.

\begin{proposition}
  \label{prop:2-3-count}
  We have the following exact enumerative results:
  \begin{itemize}
    \item[(i)] $\# \cK_2(f,\ell) = 2^{f-2-\ell}$ when $(f-1)/2 \leq \ell \leq f-2$.
    \item[(ii)] $\# \cK_3(f,\ell) = 2^{\ell-j} \cdot \# \cK_3(\ah,j)$ when $(f-2)/3 \leq \ell \leq (f-3)/2$, where $j = f-2-2\ell$.
  \end{itemize}
  For other $\ell$, these quantities are $0$.
\end{proposition}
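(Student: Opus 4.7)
The plan is to read off the depth and Frobenius number directly from the word via Proposition~\ref{prop:read-read-read}, which pins down the position of the last maximal entry, and then to argue that the remaining entries are essentially free subject to the depth bound.

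For part (i), I would first observe that \emph{every} word in $\{1,2\}^\ell$ automatically satisfies the Kunz conditions: the first condition reduces to $u_a + u_b \geq 2 \geq u_{a+b}$, and the second reduces to $u_a + u_b + 1 \geq 3 \geq u_{a+b-\ell-1}$. Proposition~\ref{prop:read-read-read} then forces a $2$-Kunz word of length $\ell$ with Frobenius number $f$ to have $w_j = 2$ for $j := f - \ell - 1$ and $w_{j+1} = \cdots = w_\ell = 1$, while $w_1 \cdots w_{j-1}$ is arbitrary in $\{1,2\}^{j-1}$. This yields the count $2^{j-1} = 2^{f-\ell-2}$, and the range $1 \leq j \leq \ell$ translates exactly to $(f-1)/2 \leq \ell \leq f-2$.

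For part (ii), I would set up a bijection
\[
  \cK_3(f,\ell) \;\longleftrightarrow\; \cK_3(\ah,j) \times \{1,2\}^{\ell-j}
\]
by splitting a depth-$3$ word at position $j := f - 2\ell - 2$ into a prefix $P = w_1 \cdots w_j$ (ending in $3$, since Proposition~\ref{prop:read-read-read} identifies $j$ as the last occurrence of the depth) and a suffix $S = w_{j+1} \cdots w_\ell$ (which then avoids $3$). In the forward direction, the first Kunz condition for $P$ is inherited from the full word, and the second Kunz condition for $P$ is automatic since $u_a + u_b + 1 \geq 3 \geq u_{a+b-j-1}$ when all entries are at most $3$; hence $P \in \cK_3(\ah,j)$. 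In the reverse direction, I would check that concatenating any $P \in \cK_3(\ah,j)$ with any $S \in \{1,2\}^{\ell-j}$ yields a valid $3$-Kunz word of length $\ell$: each instance of the first Kunz condition for $a+b \leq \ell$ either reduces to the first Kunz condition for $P$ (when $a+b \leq j$) or is trivial because $u_{a+b} \in S$ is at most $2$ while $u_a + u_b \geq 2$; the second Kunz condition is trivial because LHS $\geq 3 \geq$ RHS. The range $1 \leq j \leq \ell$ translates to $(f-2)/3 \leq \ell \leq (f-3)/2$.

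The main obstacle is the case bookkeeping in part (ii), but every sub-case collapses to either a condition inherited from $P$ or a trivial consequence of the depth-$3$ bound, so no nontrivial inequality ever needs to be verified.
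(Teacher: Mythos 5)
Your proof is correct and takes essentially the same approach as the paper: for (i) the paper cites Singhal's Theorem~10 after noting all words over $\{1,2\}$ are Kunz, and for (ii) the paper uses exactly the same prefix/suffix decomposition at position $j = f-2-2\ell$ into a stressed $3$-Kunz prefix and a $\{1,2\}$-suffix. You simply spell out in more detail the sufficiency of the two conditions (which the paper asserts in one line), correctly observing that the second Kunz condition is vacuous throughout because $w_a + w_b + 1 \geq 3$ dominates any entry of a depth-$3$ word.
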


\begin{proof}
  Part (i) is not difficult and is shown in, for instance,~\cite[Thm.~10]{singhal_2022}, since all words with 1's and 2's are 2-Kunz. For part (ii), a $3$-Kunz word $W = w_1 \cdots w_\ell$ has $2\ell + j = f-2$ by~\Cref{prop:read-read-read}, which means $\ell \in [(f-2)/3, (f-3)/2]$. For $W$ to be 3-Kunz, it must satisfy the following conditions: 
  \begin{itemize}
    \item $w_{j+1} \cdots w_{\ell} \in [2]^{\ell-j}$, since otherwise $j$ is not maximal for which $w_j = 3$; and
    \item $w_1 \cdots w_j \in \cK_3(\ah, j)$, since $w_1\dots w_j$ is itself $3$-Kunz and $w_j = 3$.
  \end{itemize}
  These conditions are also sufficient, so $\# \cK_3(f,\ell) = 2^{\ell-j} \cdot \# \cK_3(\ah,j)$. The result follows.
\end{proof}

\begin{remark}
  \label{rem:thisishard}
  It is tempting to directly estimate the size of $\cK_3(\ah,j)$ using the Kunz condition arising from $w_j = 3$. Namely, since $w_1 \cdots w_{j} \in \cK_3(\ah,j)$ forces $w_j = 3$, we have that $w_i + w_{j-1-i} \geq 3$ for $i=1$, $2$, \dots, $\floor{(j-1)/2}$. Hence, $w_i$ and $w_{j-1-i}$ cannot both be $1$'s, which implies that
  \begin{itemize}
    \item $\#\cK_3(\ah,j) \leq 8^{(j-1)/2}$ when $j$ is odd, and
    \item $\#\cK_3(\ah,j) \leq 2 \cdot 8^{(j-2)/2}$ when $j$ is even.
  \end{itemize}
  However, these naive bounds are not sharp, since not all words with $w_i + w_{j-1-i} \geq 3$ are necessarily Kunz. In fact, \Cref{thm:frob-count} implies that $\#\cK_3(\ah,j) = 6^{j/2 + o(j)}$, so the naive bounds are an exponential factor away from the truth. Even worse, one can check that the naive bounds give a bound of roughly $\#\cK_3(f, \NN) \leq \frac{1}{6}f 2^{f/2}$, which is not even enough to reprove~\Cref{thm:back0}.

  The naivet\'e is similarly bad for large depth: there are $\frac12 (q^2 + 3q - 2)$ solutions to $a + b \geq q$ where $a,b \in [q]$, so the naive method gives a bound of $\#\cK_q(\ah, j) \leq \left( \frac12 (q^2 + 3q - 2) \right)^{(j-1)/2}$, which is off from the true answer given by~\Cref{thm:frob-count} by a factor of about $2^{j/2}$. This is essentially Backelin's bound for $q \geq 5$; see~\cite[eq.~15]{backelin_1990}. Thus, we must be more careful in bounding quantities like $\#\cK_q(\ah, j)$ for $q \geq 3$, which is essentially the focus of~\Cref{ss:tail-heavy}.
\end{remark}

\subsection{The constants $c_q$}

Throughout the paper, we abbreviate the constant $c_q := \sqrt{\floor{(q+2)^2/4}}$. For even $q$, we have $c_q = (q+2)/2$; for odd $q$ we have $c_q = \frac12 \sqrt{(q+1)(q+3)}$. Evidently, $(c_q)_{q \geq 1}$ is an increasing sequence. The following inequalities will be helpful for bounding purposes.

\begin{lemma}
  \label{lem:annoying}
  For any $r \in [0,1]$, the sequence $(c_q^{1/(q+r)})_{q \geq 2}$ is decreasing.
\end{lemma}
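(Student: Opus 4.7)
The plan is to recast the inequality $c_q^{1/(q+r)} > c_{q+1}^{1/(q+1+r)}$ as $(c_{q+1}/c_q)^{q+r} < c_q$, bound the ratio $c_{q+1}/c_q$ explicitly, and compare it with $c_q$.

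Using the closed forms $c_q = (q+2)/2$ for $q$ even and $c_q = \sqrt{(q+1)(q+3)}/2$ for $q$ odd, one directly computes
\[
  \left(\frac{c_{q+1}}{c_q}\right)^{\!2} = 1 + \frac{2}{q+s}, \qquad s := \begin{cases} 2 & q \text{ even}, \\ 1 & q \text{ odd}. \end{cases}
\]
Taking logarithms and applying the strict inequality $\log(1+x) < x$ for $x > 0$, we get
\[
  (q+r)\log\!\frac{c_{q+1}}{c_q} = \frac{q+r}{2}\log\!\left(1+\frac{2}{q+s}\right) < \frac{q+r}{q+s} \leq 1,
\]
where the final estimate uses $r \leq 1 \leq s$.

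For $q \geq 4$ we have $c_q \geq c_4 = 3 > e$, so $\log c_q > 1$ and the desired inequality follows immediately from the display above. The remaining cases $q \in \{2,3\}$ are handled by a direct check: since the left side of the target inequality is maximized at $r = 1$, it suffices to verify $(3/2)^{3/2} < 2$ when $q = 2$ (i.e.,~$27/8 < 4$) and $(3/2)^{2} < \sqrt{6}$ when $q = 3$ (i.e.,~$81/16 < 6$), both of which are elementary.

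The only subtlety (and hardly an obstacle) is cosmetic: the clean bound $(q+r)/(q+s) \leq 1$ is not strong enough to dominate $\log c_q$ when $q \in \{2,3\}$, because $c_2, c_3 < e$. This forces the separate numerical check at small $q$, but those two cases reduce to trivial integer comparisons.
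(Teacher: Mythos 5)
Your proof is correct and takes essentially the same route as the paper's: reduce to $(c_{q+1}/c_q)^{q+r} < c_q$, show the left side is bounded by $e$ (you via $\log(1+x)<x$, the paper via $(1+1/n)^n<e$), conclude for $q\geq 4$ since $c_4=3>e$, and check $q\in\{2,3\}$ by hand. Your parity-dependent constant $s$ makes the intermediate computation a touch tidier than the paper's one-sided bound $c_{q+1}/c_q \leq \sqrt{(q+3)/(q+1)}$, but the argument is the same in substance.
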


\begin{proof}
  We wish to show $c_q^{q+r+1} > c_{q+1}^{q+r}$, or $c_q > \left( c_{q+1}/c_q \right)^{q+r}$. We have that
  \[c_{q+1}/c_q = \begin{cases}
    \sqrt{\frac{q+4}{q+2}} & \text{if }q\text{ even}, \\[8pt]
    \sqrt{\frac{q+3}{q+1}} & \text{if }q\text{ odd}.
  \end{cases}\]
  Hence, $c_{q+1}/c_q \leq \sqrt{\frac{q+3}{q+1}}$ in all cases. Therefore,
  \[\left( \frac{c_{q+1}}{c_q} \right)^{q+r} \leq \left( 1 + \frac{2}{q+1} \right)^{\frac{q+r}{2}} < e,\]
  since $r \in [0,1]$. This shows the result for $q \geq 4$. The result can be checked manually for $q=2$, $3$.

\end{proof}

\begin{corollary}
  \label{cor:cq-dec}
  For fixed $q \geq 3$ and $r \in [0,1]$, the quantity $F(t) = (c_q^t c_{q-1}^{1-t})^{1/(q+t-r)}$ is decreasing on $t \in [0,1]$.
\end{corollary}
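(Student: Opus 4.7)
The plan is to pass to logarithms, observe that $\log F(t)$ is a ratio of two linear functions of $t$, and then reduce the monotonicity claim to a single inequality that is an immediate consequence of \Cref{lem:annoying}.

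First I would write $\log F(t) = \dfrac{(1-t)\log c_{q-1} + t \log c_q}{q + t - r}$. Since $q \geq 3$ and $r \in [0,1]$, the denominator is strictly positive on $[0,1]$, so $F$ is smooth. A direct differentiation (quotient rule applied to a linear numerator and linear denominator) gives
\[
\frac{d}{dt} \log F(t) \;=\; \frac{(q-r)\bigl(\log c_q - \log c_{q-1}\bigr) - \log c_{q-1}}{(q+t-r)^2},
\]
whose numerator does not depend on $t$. Hence $\frac{d}{dt}\log F$ has constant sign on $[0,1]$, so $F$ itself is monotonic on $[0,1]$. It therefore suffices to verify the endpoint inequality $F(0) \geq F(1)$.

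Unpacking $F(0) = c_{q-1}^{1/(q-r)}$ and $F(1) = c_q^{1/(q+1-r)}$, the inequality $F(0) \geq F(1)$ rearranges to $c_{q-1}^{\,q+1-r} \geq c_q^{\,q-r}$. Setting $s := 1 - r \in [0,1]$, this becomes $c_{q-1}^{\,q+s} \geq c_q^{\,q-1+s}$, or equivalently
\[
c_{q-1}^{1/(q-1+s)} \;\geq\; c_q^{1/(q+s)}.
\]
This is exactly the conclusion of \Cref{lem:annoying} applied to the consecutive indices $q-1$ and $q$ with parameter $s \in [0,1]$; since \Cref{lem:annoying} is valid for all indices $\geq 2$ and we have $q - 1 \geq 2$, we are done.

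There is no real obstacle here: the only subtlety is matching parameters, namely the change of variable $s = 1 - r$ that turns the corollary's $[0,1]$-range for $r$ into the lemma's $[0,1]$-range for its own parameter. Everything else is bookkeeping about the sign of the derivative of a ratio of linear functions.
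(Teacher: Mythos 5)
Your proposal is correct and follows essentially the same route as the paper: differentiate $\log F$, observe that the sign of the derivative is governed by the $t$-independent quantity $(q-r)\log c_q - (q+1-r)\log c_{q-1}$, and negate it via~\Cref{lem:annoying} with parameter $s = 1-r$. Your extra step of phrasing this as an endpoint comparison $F(0) \geq F(1)$ is a harmless repackaging of the same inequality, though note that \Cref{lem:annoying} actually gives \emph{strict} inequality, which is what you need to conclude $F$ is strictly decreasing rather than merely non-increasing or constant.
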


\begin{proof}
  Note that $\ln F(t) = \frac{t}{q+t-r} \ln c_q + \frac{1-t}{q+t-r} \ln c_{q-1}$, which has derivative 
  \[\frac{F'(t)}{F(t)} = \frac{1}{(q+t-r)^2} \left( (q-r) \ln c_q - (q+1-r) \ln c_{q-1} \right).\]
  By~\Cref{lem:annoying}, this derivative is always negative, which implies $F'(t) < 0$ since $F(t) > 0$.
\end{proof}

\section{Kunz words of fixed depth}
\label{sec:fixed-depth}

In this section, we prove~\Cref{thm:frob-count}, which forms the backbone of our paper by implying a large number of enumerative results, namely Corollaries \ref{cor:q-depth}, \ref{cor:q-depth-mult}, and \ref{thm:med-count}. Moreover, the case of $q=3$ plays an important role in showing the convergence of certain sums in the proofs of~\Cref{cor:mult-const} and \Cref{thm:genus-const}. 

The central idea is that every instance of $q$ in a $q$-Kunz words imposes a strong Kunz condition on the other entries. Thus, a cluster of $q$'s imposes a large number of restrictions and limits the number of valid Kunz words, so much so that we can actually disregard the other conditions and still show the necessary upper bound. Then, we will represent each of these restrictions with an edge to form an almost-regular graph on the entries of the word.

\subsection{Graph lemmas} To bound the number of $q$-Kunz words, we bound the number of colorings of certain graphs arising from these Kunz words. In this section, we establish the graph lemmas needed for our proofs. Of great use to us is the following graph homomorphism lemma due to Zhao~\cite{zhao_2011}.

\begin{definition}
  A graph $H$, possibly with loops, is a \vocab{threshold graph} if there exists a labeling $g: V(H) \to \mathbb{R}$ and threshold $\lambda$ such that for (possibly equal) $u,v \in V(H)$, we have $uv \in E(H)$ if and only if $g(u) + g(v) \geq \lambda$.

  The graph $H_{q}$ is the threshold graph with $q$ vertices labeled $1$, $2$, \dots, $q$ and threshold $\lambda = q$.
\end{definition}

\begin{theorem}[{Zhao \cite[$\S2$, pg.~663]{zhao_2011}\footnote{Strangely, the result is not explicitly written within the paper, but rather follows from a series of implications at the bottom of pg.~663 in~\cite{zhao_2011}; Zhao proves that all threshold graphs are GT graphs, that is, they satisfy the given inequality.}}]

  \label{thm:zhao-hom}
  If $G$ is a loop-free, $d$-regular graph and $H$ is a thres\-hold graph, then $\operatorname{hom}(G,H) \leq \operatorname{hom}(K_{d,d},H)^{\# V(G)/(2d)}$.

  (Here, $\hom(G,H)$ is the number of graph homomorphisms from $G$ to $H$. Recall a \emph{graph homomorphism} is a function $\varphi : V(G) \to V(H)$ such that for any $x, y \in V(G)$, we have $xy \in E(G)$ only if $\varphi(x)\varphi(y) \in E(H)$.)
\end{theorem}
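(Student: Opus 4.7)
The plan is to reduce the general case to the bipartite case via the bipartite double cover, and to handle the bipartite case with a Shearer-type entropy argument.

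First I would treat bipartite $d$-regular $G$ with bipartition $A \sqcup B$ and $|V(G)| = 2n$. Let $\Phi$ be a uniformly random homomorphism $G \to H$ and write $\mathbb{H}$ for Shannon entropy, so $\log \hom(G, H) = \mathbb{H}(\Phi)$. Expanding via the chain rule,
\[\mathbb{H}(\Phi) = \mathbb{H}(\Phi_A) + \mathbb{H}(\Phi_B \mid \Phi_A),\]
and applying Shearer's lemma to the conditional term (each $b \in B$ lies in exactly $d$ neighborhoods $N(a)$) yields
\[\mathbb{H}(\Phi_B \mid \Phi_A) \leq \frac{1}{d} \sum_{a \in A} \mathbb{H}(\Phi_{N(a)} \mid \Phi(a)).\]
Each right-hand term is the entropy of extending one value to its $d$ neighbors, i.e.,~a star $K_{1,d}$. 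Summing across $A$ and comparing to the entropy of a uniform homomorphism $K_{d,d} \to H$ (which is the union of two such stars glued at their leaves) gives the Galvin--Tetali bound $\hom(G, H) \leq \hom(K_{d,d}, H)^{n/d}$.

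For general loop-free $d$-regular $G$, I would form the bipartite double cover $\widetilde G := G \times K_2$, with vertex set $V(G) \times \{0,1\}$ and an edge from $(u,0)$ to $(v,1)$ whenever $uv \in E(G)$; this is bipartite, $d$-regular, loop-free, on $2|V(G)|$ vertices. The central reduction is Zhao's bipartite swapping inequality
\[\hom(G, H)^2 \leq \hom(\widetilde G, H).\]
Using that $H$ is a threshold graph with weight $g: V(H) \to \RR$ and threshold $\lambda$, for each ordered pair $(\phi, \psi)$ of homomorphisms $G \to H$ define $\phi'$ and $\psi'$ to be the pointwise max and min of $\phi, \psi$ under $g$. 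Then $(u, 0) \mapsto \phi'(u)$ and $(v, 1) \mapsto \psi'(v)$ is a homomorphism $\widetilde G \to H$: for $uv \in E(G)$, after WLOG taking $g(\phi(u)) \geq g(\psi(u))$ so that $\phi'(u) = \phi(u)$, the bound $g(\phi(u)) + g(\psi'(v)) \geq \lambda$ follows by splitting on the sign of $g(\phi(v)) - g(\psi(v))$ and using that $\phi$ and $\psi$ are both homomorphisms. Injectivity of $(\phi, \psi) \mapsto \widetilde \Phi$ requires additional bookkeeping, e.g.,~fixing a canonical direction of swap on each connected component of the disagreement set $\{v : \phi(v) \neq \psi(v)\}$, which uses the loop-freeness of $G$.

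Combining, $\hom(G, H)^2 \leq \hom(\widetilde G, H) \leq \hom(K_{d,d}, H)^{|V(G)|/d}$, and taking square roots yields the theorem. \emph{The main obstacle} is the swapping trick: verifying that the max/min swap produces a valid double-cover homomorphism is a direct case analysis leveraging the threshold condition, but making $(\phi, \psi) \mapsto \widetilde \Phi$ genuinely injective requires a canonical choice of swap on each disagreement component, and this is where the threshold (decreasing-edge-set) structure of $H$ is essential; without it one could only show a weaker inequality.
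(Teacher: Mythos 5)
The paper does not prove this theorem---it cites Zhao and, per the footnote, notes that it follows from Zhao's chain of implications (threshold graphs are GT graphs). Your proposal correctly reconstructs the architecture of that argument: the Galvin--Tetali entropy inequality handles bipartite $d$-regular $G$, and Zhao's bipartite swapping inequality $\hom(G,H)^2 \le \hom(G\times K_2, H)$ transfers it to general loop-free $G$ via the double cover. This is the right plan, and your case analysis showing that the pointwise $g$-max/$g$-min pair is a valid homomorphism of the double cover is correct.

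However, the injectivity step is where the actual content of ``threshold graphs are GT'' lives, and your sketch of it has a genuine gap. The pointwise max/min map is far from injective (its fiber can have size $2^{|D|}$, $D = \{v : \phi(v) \neq \psi(v)\}$, since one cannot tell at each $v\in D$ which of $\phi,\psi$ held the larger value). The repair you suggest---a canonical keep-or-swap per connected component of $G[D]$---does not work: when both endpoints of an edge lie in the same component, a whole-component swap replaces $(\phi,\psi)$ by $(\psi,\phi)$ on that edge, which leaves the double-cover constraint unchanged and so cannot repair a bad edge internal to a component. The correct object to decompose is the graph $B$ of \emph{bad edges}, those $uv\in E(G)$ with $\phi(u)\psi(v)\notin E(H)$ or $\psi(u)\phi(v)\notin E(H)$. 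Zhao's key observation is precisely that the threshold structure forces $B$ to be bipartite between $\{v: g(\phi(v))>g(\psi(v))\}$ and $\{v: g(\psi(v))>g(\phi(v))\}$ (summing the two homomorphism constraints at $uv$ shows at most one of the two cross-constraints can fail, which forces the $g$-dominance to flip across a bad edge). Swapping on exactly one canonical side of each component of $B$ then repairs every bad edge, creates none (a swap across one endpoint trivially satisfies both cross-constraints), and is reversible. Finally, loop-freeness of $G$ is not what rescues injectivity; it is needed so that the double cover is a simple bipartite graph suitable for the Galvin--Tetali step.
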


The hypothesis of~\Cref{thm:zhao-hom} requires that $G$ is $d$-regular, but many of the graphs we work with are ``almost regular'' in the sense that almost, but not all, vertices have degree $d$. We perturb these graphs to be $d$-regular without decreasing the number of graph homomorphisms using the following technical lemma.

\begin{definition}
  For a graph $G$, let $D_d(G) := d \cdot \# V(G) - 2 \cdot \# E(G)$. The quantity $D_d(G)$ can be viewed as the ``discrepancy'' $\sum_{v \in V(G)} (d - \deg v)$ between $G$ and an $d$-regular graph.
\end{definition}

\begin{lemma}
  \label{lem:perturb}
 Let $G$ be any graph with maximum degree at most $d$, and abbreviate $D := D_d(G)$. There is a $d$-regular graph $G'$ with the following properties:
  \begin{itemize}
    \item $\# V(G') \leq 1 + \max\{3 + D/d, 2\ceil{d/2}\} + \#V(G)$; and
    \item $ \hom(G,H_q) \leq \hom(G',H_q)$. 
  \end{itemize}
\end{lemma}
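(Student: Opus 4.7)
The plan is to build $G'$ by adjoining $N$ new vertices $u_1, \dots, u_N$ together with additional edges, and to extend any homomorphism $\varphi\colon V(G) \to V(H_q)$ to $\varphi'\colon V(G') \to V(H_q)$ by setting $\varphi'(u_i) = q$ for every new vertex. The key point is that $q$ is a \emph{universal vertex} of $H_q$: since $q + v \geq q$ for all $v \in [q]$, the vertex $q$ has a loop and is adjacent to every vertex of $H_q$. Hence any new edge (old--new or new--new) is carried to an edge of $H_q$ automatically, so $\varphi'$ is a valid homomorphism, and $\hom(G, H_q) \leq \hom(G', H_q)$ follows at once.

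To build the edges, I first attach $d - \deg_G(v)$ \emph{old--new} edges from each old vertex $v$ to distinct new vertices, using $D$ edges in total. Writing $a_i$ for the number of old--new edges at $u_i$ and $r_i := d - a_i$, the remaining task is to realize the degree sequence $(r_i)$ on $\{u_1, \dots, u_N\}$ by \emph{new--new} edges (so that $G'$ becomes a simple $d$-regular graph). Distributing the old--new edges round-robin, so that the $a_i$'s differ by at most one, makes $(r_i)$ a near-regular sequence with $\max_i r_i \leq d$. I then take $N := \max\{d + 1, \lceil D/d\rceil\}$, bumped up by one when needed to force $Nd - D$ to be even (which can only fail when $d$ is odd). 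Under this choice: $N \geq d + 1$ yields $\max_i r_i \leq d \leq N - 1$; $N \geq \lceil D/d\rceil$ yields $\sum r_i = Nd - D \geq 0$; and near-regular sequences with bounded maximum and even sum are graphical by a standard realization (for instance via Erd\H{o}s--Gallai or a direct cyclic construction). A short case check on whether $d + 1$ or $\lceil D/d\rceil$ dominates, combined with the parity bump, confirms $N \leq 1 + \max\{3 + D/d, 2\lceil d/2\rceil\}$.

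The main obstacle is the realizability of the near-regular sequence $(r_i)$: for an arbitrary distribution of the old--new edges, $(r_i)$ could fail to be graphical even under the parity and max-degree constraints. The balanced round-robin distribution is essential---it forces $(r_i)$ to be near-regular, which is enough for realizability. Once this is arranged, the stated bound on $\#V(G')$ emerges from the $O(1)$ overhead of the parity bump and the rounding inherent in $\lceil D/d\rceil$, absorbed into the constants $3$ and $1$ of the bound.
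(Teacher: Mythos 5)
Your proof is correct and takes essentially the same route as the paper: both arguments rest on the observation that the label-$q$ vertex of $H_q$ is universal (looped and adjacent to everything), so any homomorphism of $G$ extends by sending all added vertices to $q$, giving an injection $\operatorname{Hom}(G,H_q)\hookrightarrow\operatorname{Hom}(G',H_q)$. The paper packages this via ``admissible'' homomorphisms of a red/blue-colored graph; you do it directly, which is a touch cleaner. The bookkeeping differs slightly: the paper removes a few blue--blue edges from $G$ to make $D$ divisible by $d$ and adds an extra blue vertex for parity, whereas you keep $G$ intact, use $\lceil D/d\rceil$, and bump $N$ by one when $d$ is odd to make $Nd-D$ even; both land inside the stated vertex bound. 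The one place you should expand a bit is the realizability claim: ``near-regular with max $\le N-1$ and even sum is graphical'' is true, but you should either (i) note that the greedy-minimum assignment of the old--new edges (process old vertices one at a time, always pick the $d-\deg_G(v)$ new vertices of smallest current degree) maintains the within-one invariant on the $a_i$'s, and then (ii) give the explicit circulant-type realization of the resulting near-regular residual sequence, rather than just citing Erd\H{o}s--Gallai. The paper's analogue of (ii) (``add edges between two red vertices of least degree until all have degree $d$'') is similarly terse, so this is a shared, minor gap rather than an error in your argument.
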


\begin{proof}
  Before modifying $G$, color every vertex in $V(G)$ blue. For a two-colored graph $G$, say a homomorphism from $G$ to $H_q$ is \vocab{admissible} if every red vertex is mapped to the $H_q$-vertex labeled $q$. Then the following operations do not decrease the number of admissible homomorphisms: 
  \begin{itemize}
    \item removing an edge between blue vertices;
    \item adding isolated vertices (either red or blue);
    \item adding edges between two vertices, not both blue. 
  \end{itemize}
  The first operation decreases the number of restrictions on blue vertices, and for the last two operations, we can send every red vertex with the $H_q$-vertex labeled $q$ without adding restrictions to the blue vertices.
  Hence, we perform the following procedure on $G$ to get a new graph $G'$:
  \begin{itemize}
    \item[(1)] If $\# V(G)$ is odd, add an isolated, unlabeled blue vertex (so now $\# V(G)$ is even).
    \item[(2)] Remove edges between blue vertices until $D := D_d(G)$ is divisible by $d$. 
    \item[(3)] If $D/d > d$, add $D/d$ isolated red vertices. Otherwise, add $2\ceil {d/2}$ red vertices. 
    \item[(4)] For each blue vertex $v_b$ with degree $x < d$, draw $d-x$ edges from $v_b$ to the $d-x$ red vertices with least degree (breaking ties arbitrarily).
    \item[(5)] Add an edge between two red vertices of least degree (breaking ties arbitrarily) until all red vertices have degree $d$.
  \end{itemize}
  We claim that every step is valid and that the procedure finally yields a $d$-regular graph $G'$. Step (1) causes no issues and increases $D$ by $d$ if we add a new vertex. Step (2) is possible since removing an edge increases $D$ by $2$, and $\# V(G)$ is even so $D$ is even; we remove at most $d$ edges, so $D$ increases by at most $2d$. Step (3) is also fine; we add at most $\max \{ D/d, 2\ceil{d/2}\}$ edges at this point. 

  Step (4) allows all blue vertices to have degree $d$; since there are at least $D/d$ red vertices, no red vertex will have degree more than $d$, and since there at least $d$ red vertices, the $d-x$ red vertices we choose are all distinct. Step (5) is only done if $D/d < d$, in which case we have an even number of red vertices whose degrees total to $D$ and pairwise differ by at most $1$. Adding an edge maintains the latter property, while increasing the total degree by $2$. After adding $d\ceil{d/2} - D/2$ edges, the total degree is $d\cdot 2\ceil{d/2}$ and so every red vertex will have degree $d$.

  Voil\`a! The end graph $G'$ is $d$-regular and has at most $1 + \max\{3+D/d,2\ceil{d/2}\} + \#V(G)$ vertices. It has at least as many admissible homomorphisms as $G$, the latter of which has exactly $\hom(G,H_q)$ admissible homomorphisms. Every admissible homomorphism is a homomorphism, so we have $\hom(G',H_q) \geq \hom(G,H_q)$ as desired.
\end{proof}

We also use the following estimate on graph homomorphisms in our calculations.

\begin{lemma}
  \label{lem:hom-cnt}
  We have that $\hom(K_{d,d}, H_{q}) \leq 2q \cdot c_q^{2d}$.
\end{lemma}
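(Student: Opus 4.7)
The plan is to enumerate homomorphisms $\varphi : K_{d,d} \to H_q$ directly. Write $K_{d,d} = A \sqcup B$ with $|A| = |B| = d$, and view $\varphi$ as a pair of labelings $\varphi|_A, \varphi|_B : [d] \to [q]$. Since $H_q$ is the threshold graph with threshold $q$, the condition that $\varphi$ is a homomorphism is that $\varphi(a) + \varphi(b) \geq q$ for every $a \in A$ and $b \in B$, which collapses to the single inequality $\min \varphi(A) + \min \varphi(B) \geq q$.

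I would then stratify by $\alpha := \min \varphi(A) \in [q]$. The number of $A$-labelings attaining minimum exactly $\alpha$ is
\[(q - \alpha + 1)^d - (q - \alpha)^d\]
by inclusion--exclusion on the range $\{\alpha, \alpha+1, \dots, q\}$. Given $\min \varphi(A) = \alpha$, the valid $B$-labelings are precisely those taking values in $\{q - \alpha, q - \alpha + 1, \dots, q\}$, giving $(\alpha + 1)^d$ choices. Therefore
\[\hom(K_{d,d}, H_q) \;=\; \sum_{\alpha=1}^{q} \bigl((q - \alpha + 1)^d - (q - \alpha)^d\bigr)(\alpha + 1)^d.\]

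The remaining step is to bound this sum. Substituting $k = q - \alpha + 1$ and using $k^d - (k-1)^d \leq k^d$, each summand is at most $k^d (q - k + 2)^d = \bigl(k(q-k+2)\bigr)^d$. Treated as a continuous function of $k$, the quadratic $k(q-k+2)$ is maximized at $k = (q+2)/2$ with value $(q+2)^2/4$, so over integer $k \in [q]$ the maximum is $\lfloor (q+2)^2/4 \rfloor = c_q^2$. Summing the $q$ terms yields
\[\hom(K_{d,d}, H_q) \;\leq\; q \cdot c_q^{2d} \;\leq\; 2q \cdot c_q^{2d},\]
as claimed.

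There is no real obstacle here: the only content is recognizing that the threshold structure of $H_q$ reduces the global condition to one on minima, and that the resulting one-dimensional optimization produces exactly the constant $c_q^2 = \lfloor (q+2)^2/4 \rfloor$ that the paper has already isolated. The factor of $2$ in the stated bound is slack that is harmless for the applications in \Cref{sec:count}.
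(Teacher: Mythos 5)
Your proof is correct and follows essentially the same approach as the paper: stratify by a minimum label and bound $(q-\alpha+1)(\alpha+1)$ via AM-GM by $\lfloor(q+2)^2/4\rfloor = c_q^2$. The only difference is that the paper conditions on the \emph{global} minimum label over all $2d$ vertices, picking up a factor of $2$ for which bipartite half contains the minimizer, whereas you condition on $\min\varphi(A)$ alone, which is a bit cleaner and even saves that factor of $2$. One small nit: at $\alpha = q$ the valid $B$-labels lie in $[q]$, so there are $q^d$ choices rather than $(q+1)^d$, meaning your displayed formula should be a $\leq$ rather than an $=$ --- but this is harmless since the rest of the argument only uses the upper bound.
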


\begin{proof}
  Suppose some vertex $v$ in $K_{d,d}$ is sent to the $H_{q}$-vertex with label $m$, where $m$ is minimal. There are two ways to choose the bipartite half of $v$, and vertices in this half must be sent to $H_{q}$-vertices with labels in $[m, q]$. Then the other half must be sent to $H_{q}$ vertices with label at least $q-m$. Hence, for fixed $m$, there are at most $2 (q-m+1)^d (m+1)^d$ homomorphisms. So the total number of homomorphisms is at most
  \[\sum_{m=1}^q 2(q-m+1)^d (m+1)^d \leq \sum_{m=1}^q 2 \left\lfloor \frac{(q+2)^2}{4} \right\rfloor^d = 2q \cdot c_q^{2d},\]
  since $(q-m+1)(m+1)$ is at most $(q+2)^2/4$ by the AM-GM inequality and is an integer.
\end{proof}

\subsection{$t$-tail-heavy words}
\label{ss:tail-heavy}

In the spirit of using clusters of $q$'s to impose Kunz conditions, we make the following definition:

\begin{definition}
  A (not necessarily Kunz) word $w_1 \cdots w_{\ell} \in [q]^\ell$ is \vocab{$t$-tail-heavy} of depth $q$ if:
  \begin{itemize}
    \item there exist $n > \sqrt{\ell}$ indices\footnote{Here, the choice of $\sqrt{\ell}$ is not important; we may use any sub-$\left( \frac{\ell}{\log \ell} \right)$ function which goes to infinity as $\ell$ grows large.} $i_1, \dots, i_n$ greater than $\ell - t$ such that $w_{i_1} = \dots = w_{i_n} = q$;
    \item for any $x,y \leq \ell - t$ such that $x+y \in \{i_1, \dots, i_n\}$, we have $w_x + w_y \geq q$.
  \end{itemize}
\end{definition}

In essence, $t$-tail-heavy words of depth $q$ are words with a large number of $q$'s amongst the last $t$ entries, restricted by the Kunz conditions arising from these $q$'s. We prove the following estimate on the number of $t$-tail-heavy words of depth $q$.

\begin{theorem}
  \label{thm:tail-heavy}
  The number of $t$-tail-heavy words of length $\ell$ and depth $q$ is at most $tq^{t} c_q^{\ell + \sqrt{\ell}+10}$.
\end{theorem}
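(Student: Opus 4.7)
The plan is to associate to each $t$-tail-heavy word a graph homomorphism from a suitable graph to the threshold graph $H_q$, then apply Zhao's inequality together with the perturbation lemma.

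First, I would set up the graph. Given a $t$-tail-heavy word $w$ with witness set $I=\{i_1,\dots,i_n\}\subseteq(\ell-t,\ell]$, form the graph $G_I$ on vertex set $\{1,\dots,\ell-t\}$ by drawing an edge $\{x,y\}$ (with a loop if $2x\in I$) whenever $x+y\in I$ and $x,y\le\ell-t$. The second condition of $t$-tail-heaviness says exactly that the head $(w_1,\dots,w_{\ell-t})$, regarded as a labeling of $V(G_I)$ by elements of $[q]$, is a graph homomorphism $G_I\to H_q$. Consequently the number of $t$-tail-heavy $w$ with fixed witness set $I$ is at most $q^{t-|I|}\cdot\hom(G_I,H_q)$, the first factor enumerating the remaining tail positions and the second the heads.

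Second, I would bound $\hom(G_I,H_q)$. The graph $G_I$ has maximum degree at most $n=|I|$ (each $j\in I$ contributes at most one edge at any given vertex), and a direct calculation shows its discrepancy satisfies $D_n(G_I)=\sum_{j\in I}(j-\ell+t-1)\le n(t-1)$, so $D_n(G_I)/n\le t-1$. By Lemma~\ref{lem:perturb} with $d=n$ there is an $n$-regular graph $G'_I$ on at most $\ell+3$ vertices with $\hom(G_I,H_q)\le\hom(G'_I,H_q)$. Zhao's inequality (Theorem~\ref{thm:zhao-hom}) and Lemma~\ref{lem:hom-cnt} then give
\[\hom(G'_I,H_q)\le(2q\cdot c_q^{2n})^{(\ell+3)/(2n)}=(2q)^{(\ell+3)/(2n)}c_q^{\ell+3}.\]
The key algebraic input is $2q\le c_q^2$, which follows from $(q-2)^2\ge0$; this turns the factor $(2q)^{(\ell+3)/(2n)}$ into $c_q^{(\ell+3)/n}$, and for $n>\sqrt{\ell}$ yields $\hom(G_I,H_q)\le c_q^{\ell+\sqrt{\ell}+O(1)}$.

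Third, I would sum over witness sets. Each $t$-tail-heavy word has a unique maximal witness $I^{\max}_w$ (namely the intersection of the $q$-positions in the tail with the set of $j$ for which condition 2 holds), which has size $>\sqrt{\ell}$. Partitioning by $n=|I^{\max}_w|\in(\sqrt{\ell},t]$ — only $t$ possible values — and bounding
\[\#\{w:|I^{\max}_w|=n\}\le\binom{t}{n}q^{t-n}\cdot c_q^{\ell+O(\ell/n)}\]
(using the sharper $n$-dependent hom estimate above), the plan is to show that each summand is at most $q^t c_q^{\ell+\sqrt{\ell}+O(1)}$, whence summing over the $\le t$ values of $n$ yields the theorem.

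The main obstacle will be the final estimate $\binom{t}{n}q^{t-n}c_q^{O(\ell/n)}\le q^tc_q^{O(\sqrt{\ell})}$. This requires balancing two competing factors: for $n$ near $\sqrt{\ell}$, the binomial $\binom{t}{n}$ can be bounded by $t^{n}\le c_q^{O(\sqrt{\ell}\log t)}$ and the hom factor is already tight; for $n$ larger the binomial grows but the hom factor $c_q^{\ell/n}$ contracts correspondingly. The inequality $2q\le c_q^2$ drives exactly this trade-off, and the regime $t\ge\ell/2$ can be handled separately by the trivial bound $|X|\le q^\ell$, which comfortably fits into $tq^tc_q^{\ell+\sqrt{\ell}+10}$. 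Carrying out this balancing step cleanly, and collecting the constants into the $+10$ slack in the exponent, is the most delicate part of the argument.
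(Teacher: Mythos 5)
Your plan follows the same architecture as the paper's proof (build a constraint graph, invoke Zhao's inequality via the perturbation lemma and the $\hom(K_{n,n},H_q)$ estimate), but there are two genuine issues, one minor and one that I do not see how to repair within your framework.

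First, the loops. You include a loop at $x$ whenever $2x\in I$, but \Cref{thm:zhao-hom} requires $G$ to be loop-free, and the output of \Cref{lem:perturb} inherits any loops you feed it. Worse, keeping the loop can push $\deg(x)$ to $n+1$ (a loop contributes $2$), so the hypothesis ``maximum degree at most $d=n$'' in \Cref{lem:perturb} can fail. The paper simply discards those constraints: the graph $G$ has no loops, and the vertices $i_j/2$ drop to degree $n-1$. This only makes the upper bound on $\hom(G,H_q)$ larger, which is harmless. Your discrepancy computation $D_n(G_I)=\sum_{j\in I}(j-\ell+t-1)$ is the correct formula for the \emph{looped} graph; with loops removed (as the paper needs) you get $D_n(G)\le n(t+1)$, and $\#V(G')\le\ell+5$ rather than $\ell+3$. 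This part is fixable, but it needs to be done.

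Second, and more seriously, the final summation. You partition by the maximal witness $I^{\max}_w$ and bound $\#\{w:I^{\max}_w=I\}\le q^{t-|I|}\hom(G_I,H_q)$, arriving at $\sum_{n>\sqrt\ell}\binom{t}{n}q^{t-n}\max_I\hom(G_I,H_q)$. But $\sum_{n}\binom{t}{n}q^{t-n}=(q+1)^t$, so the best this decomposition can give is $(q+1)^t\,c_q^{\ell+\sqrt\ell+O(1)}$, which is larger than the target $tq^tc_q^{\ell+\sqrt\ell+10}$ by the exponential factor $(1+1/q)^t/t$ (for $t=\Theta(\ell)$ this is $e^{\Theta(\ell)}$, while the $c_q^{\sqrt\ell}$ slack is only subexponential). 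Your proposed per-summand estimate $\binom{t}{n}q^{t-n}c_q^{O(\ell/n)}\le q^tc_q^{O(\sqrt\ell)}$ already fails at $n\approx t/(q+1)$ (where the binomial weights peak) and at $n\approx\sqrt\ell$ (where $\binom{t}{n}q^{-n}\approx(t/q)^{\sqrt\ell}$ swamps $c_q^{O(\sqrt\ell)}$); the trade-off you invoke does not close. The paper sidesteps this entirely by never enumerating witness sets: it counts tails directly, with at most $q^t$ of them, and for each tail takes $n$ to be \emph{the number of $q$'s in that tail}, so the constraints on the head come from all $q$-positions and the residual $t-n$ tail entries lie in $[q-1]$. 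In effect the paper's implicit decomposition is $\sum_n\binom{t}{n}(q-1)^{t-n}(\cdots)=q^t\max_n(\cdots)$ rather than your $\sum_n\binom{t}{n}q^{t-n}(\cdots)$, which is exactly where the extra $(1+1/q)^t$ in your version comes from. To recover the stated bound you should count by tail (as the paper does) rather than by maximal witness.
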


\begin{proof}[Proof]
  Abbreviate $h := \ell - t$, and suppose $W = w_1 \cdots w_{\ell}$ is $t$-tail-heavy of depth $q$. There are at most $q^{t}$ ways to choose the last $t$ entries $w_{h+1}, \dots, w_{\ell}$, so it suffices to count the number of ways to choose $W_H := w_1 \cdots w_h$. Let $w_{i_1} = w_{i_2} = \dots = w_{i_n} = q$ with $n > \sqrt{\ell}$. Any choice of $W_H$ must have $w_x + w_y \geq q$ if $x+y \in \{i_1, \dots, i_n\}$. 

  Construct a graph $G$ with $h$ vertices labeled $1,2,\dots, h$ such that $xy \in E(G)$ if $x+y \in \{i_1, \dots, i_n\}$. By the above discussion, the number of choices for $W_H$ is at most $\operatorname{hom}(G,H_{q})$. We cannot directly use~\Cref{thm:zhao-hom} since $G$ is not regular, but $G$ has enough vertices of degree $n$ that we will perturb $G$ to make an $n$-regular graph. Namely, every vertex with labels $t+1, \dots, h$ has degree $n$ other than those with label in $\{i_1/2, \dots, i_n/2\}$, which have degree $n-1$. Hence, we have that $D_n(G) \leq n + nt$, so by~\Cref{lem:perturb} there is an $n$-regular graph $G'$ with 
  \[\#V(G') \leq 1 +\max\{3+D/n, 2\ceil{n/2}\} + \# V(G) = 1 + (4+t) + h = \ell + 5\] and $\hom(G',H_q) \geq \hom(G, H_q)$. Now, we have that
  \[\hom(G,H_q) \leq \hom(G',H_{q}) \leq \hom(K_{n,n}, H_{q})^{(\ell+5)/(2n)} \leq (2q \cdot c_q^{2n})^{(\ell+5)/(2n)}\]
  by~\Cref{lem:hom-cnt}. Hence, the number of $t$-tail-heavy words of depth $q$ is at most
  \begin{align*}
  q^t \sum_{\sqrt{\ell} < n \leq t} (2q \cdot c_q^{2n})^{(\ell+5)/(2n)} &\leq q^t \sum_{\sqrt{\ell} < n \leq t} (c_q^{2n+2})^{(\ell+5)/(2n)} \\
  & \leq q^t \sum_{\sqrt{\ell} < n \leq t} c_q^{(\ell+5)(1+1/\sqrt{\ell})} \\
  & \leq q^t \sum_{\sqrt{\ell} < n \leq t} c_q^{\ell + \sqrt{\ell}+10} \\
  & \leq tq^t c_q^{\ell + \sqrt{\ell}+10},
\end{align*}
as we had sought.\end{proof}

\subsection{$q$-Kunz words}

We are almost ready to prove~\Cref{thm:frob-count}. We only need a short lemma about an operation sending $q$-Kunz words to $(q-1)$-Kunz words.

\begin{lemma}
  \label{lem:decrease}
  If $w_1 \cdots w_\ell$ is a $q$-Kunz word and $v_i = \min \{ q-1, w_i \}$, then $v_1\cdots v_{\ell}$ is $(q-1)$-Kunz.
\end{lemma}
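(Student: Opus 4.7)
The plan is to verify directly that the word $V = v_1 \cdots v_\ell$ satisfies both Kunz conditions, via a short case analysis driven by which of the entries involved were actually capped by the $\min\{q-1,\cdot\}$ operation. Since $v_i \leq q-1$ everywhere by construction, the depth bound is automatic, and since $w_i \geq 1$ we have $v_i \geq 1$ (using $q \geq 2$), so we only need to check the two inequalities
\[
v_i + v_j \geq v_{i+j} \quad (i+j \leq \ell), \qquad v_i + v_j + 1 \geq v_{i+j-\ell-1} \quad (i+j > \ell+1).
\]

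For the first inequality, fix $i,j$ with $i+j \leq \ell$ and split on whether $v_{i+j} = w_{i+j}$ or $v_{i+j} = q-1 < w_{i+j}$. In the first subcase, if neither $v_i$ nor $v_j$ is capped then the original Kunz inequality $w_i + w_j \geq w_{i+j}$ gives the result immediately; if at least one of them (say $v_i$) is capped, then $v_i = q-1$ and $v_i + v_j \geq (q-1) + 1 \geq q-1 \geq v_{i+j}$. In the second subcase, $w_{i+j} = q$, so the original Kunz condition gives $w_i + w_j \geq q$; if neither entry is capped then $v_i + v_j = w_i + w_j \geq q > q-1 = v_{i+j}$, and if $v_i = q-1$ is capped then $v_i + v_j \geq (q-1) + 1 = q > v_{i+j}$.

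The second inequality is handled the same way. When $v_{i+j-\ell-1} = w_{i+j-\ell-1}$, if neither of $v_i, v_j$ is capped we use the original condition directly, and if one is capped we use $v_i + v_j + 1 \geq (q-1) + 1 + 1 > q-1 \geq v_{i+j-\ell-1}$. When $v_{i+j-\ell-1} = q-1$ with $w_{i+j-\ell-1} = q$, the original condition gives $w_i + w_j \geq q - 1$, which if neither entry is capped yields $v_i + v_j + 1 = w_i + w_j + 1 \geq q \geq v_{i+j-\ell-1}$; if $v_i = q - 1$, then $v_i + v_j + 1 \geq (q-1) + 1 + 1 > q-1 = v_{i+j-\ell-1}$.

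There is no real obstacle here; the only subtlety is bookkeeping the four combinations (capped vs.\ uncapped on each side of the inequality), and in every case the slack gained by capping the right-hand side from $q$ down to $q-1$ is enough to compensate for the fact that the left-hand side was also potentially decreased.
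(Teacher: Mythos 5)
Your proof is correct and takes essentially the same direct-verification approach as the paper: check both Kunz inequalities by a short case analysis. The paper's version is slightly leaner — it splits only on whether $q-1 \in \{v_i, v_j\}$, which immediately forces $v_i + v_j \geq q > q-1 \geq v_{i+j}$, and otherwise observes $(v_i,v_j)=(w_i,w_j)$ so the original condition plus $w_{i+j}\geq v_{i+j}$ finishes; your additional split on whether the right-hand entry is capped is harmless but unnecessary.
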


\begin{proof}
  For any pair $(i,j)$ with $i+j \le \ell$, consider the quantities $v_i + v_j$ and $v_{i+j}$. If $q-1 \in \{v_i, v_j\}$, then we have $v_i + v_j > q-1 \geq v_{i+j}$. If not, then we have $(v_i, v_j) = (w_i, w_j)$ and it follows that $v_i + v_j = w_i + w_j \geq w_{i+j} \geq v_{i+j}$. A similar analysis is true for the case $i+j > \ell + 1$.
\end{proof}

Every word consisting of 1's and 2's is 2-Kunz, so it is not hard to see that for $q=2$ the number of numerical semigroups with multiplicity $m$ and Frobenius number $f \in (m, 2m)$ is $2^{\alpha m + O(1)}$, where $\alpha = (f-m)/m$. This will serve as our base case for the induction in our proof; although strictly speaking~\Cref{thm:frob-count} is not true for $q=2$, the only important part of the inductive step is that, asymptotically, there are fewer words of depth $q-1$ than depth $q$.

\begin{proof}[Proof of~\Cref{thm:frob-count}]

  We prove the result by induction, with the ``base case'' of $q=2$ (see the above discussion). Throughout, we work directly with Kunz words of length $\ell = m-1$, and we wish to bound the size of $\#\cK_q(f,\ell)$ from below and from above.
  
  \paragraph{Lower bound.} Let $j = f - m(q-1) = \alpha m$. Consider the family of words $w_1 \cdots w_{\ell}$ governed by the following conditions: we must have $w_j = q$ and require that $w_i$ is in the interval
  \begin{itemize}
    \item $[\floor{(q+1)/2}, q]$ if $i \leq j/2$;
    \item $[\floor{q/2}, q]$ if $j/2 < i < j$;
    \item $[\floor{q/2}, q-1]$ if $j < i \leq (\ell+j+1)/2$; and
    \item $[\floor{(q-1)/2}, q-1]$ if $i > (\ell+j+1)/2$.
  \end{itemize}

  Recall from~\Cref{prop:read-read-read} that the Frobenius number of the words in this family is equal to $(\ell+1)(q-1) + j = f$. We claim these words are $q$-Kunz. Indeed, the Kunz conditions are guaranteed if we have $w_x + w_y \geq q$ or $w_x + w_y = q-1$ and $x+y > j$. The only other case is when $x,y > (\ell+j+1)/2$, in which case $x+y - \ell - 1 > j$ and so $w_x + w_y + 1 \geq q-1 \geq w_{x+y-\ell-1}$, as desired.

  There are precisely
  \[\left\lfloor \frac{(q+2)}{2} \right\rfloor^{\floor{j/2}} \left \lfloor \frac{(q+3)}{2} \right\rfloor^{\floor{(j-1)/2}} \cdot \left \lfloor \frac{(q+1)}{2} \right\rfloor^{\floor{(\ell-j+1)/2}} \left\lfloor \frac{(q+2)}{2} \right\rfloor^{\floor{(\ell-j)/2}}\]
  words in the constructed family. Since $j = \alpha m$, we have that $\lfloor j/2 \rfloor$ and $\lfloor (j-1)/2 \rfloor$ are both $\alpha m/2 + O(1)$, and $\lfloor (\ell -j+1)/2\rfloor$ and $\lfloor (\ell-j)/2 \rfloor$ are both $(1-\alpha)m/2 + O(1)$. Moreover, we have
  \[c_q^2 = \left\lfloor \frac{(q+2)^2}{4} \right\rfloor = \left\lfloor \frac{(q+2)}{2} \right\rfloor \left\lfloor \frac{(q+3)}2 \right\rfloor. \]
  Hence, the left factor is $c_q^{\alpha m + O(1)}$, while the right factor is $c_{q-1}^{(1-\alpha)m + O(1)}$. This is enough for the lower bound.

  \paragraph{Upper bound.} Select $\eps > 0$ and let $t := \ceil{ \eps \ell}$. Split the word $W := w_1 \cdots w_{\ell}$ into chunks $W_k := w_{kt+1} w_{kt+2} \cdots w_{(k+1)t}$ for all nonnegative integers $k$ with $kt+1 \leq \ell$; every chunk has length $t$, except possibly the final chunk, and there are $C + 1 \leq \ceil{1/\eps}$ chunks in total. Say a chunk is \vocab{heavy} if it contains more than $\sqrt{\ell}$ entries equal to $q$. We split into cases depending on whether some chunk is heavy.

  \begin{quote}
    \subparagraph{Case 1: No heavy chunk.} In this case, $W$ is essentially $(q-1)$-Kunz, namely it does not have many $q$'s. Formally, each chunk has less than $\sqrt{\ell}$ entries equal to $q$, so if $t \geq 2\floor{\sqrt{\ell}}$ (which is true for $\ell$ large), then there are at most
    \[\sum_{k=0}^{\floor{\sqrt{\ell}}} \binom{t}k \leq \floor{\sqrt{\ell}} \binom{t}{\floor{\sqrt{\ell}}} \leq {\sqrt{\ell}} (\eps \ell)^{\sqrt{\ell}}\]
    ways to place the $q$'s in each chunk, for a total of at most $\left( \sqrt{\ell} (\eps \ell)^{\sqrt{\ell}} \right)^{C}$ ways to place the $q$'s overall. For each placement of $q$'s, we may use the operation in~\Cref{lem:decrease} to send $W$ to a $(q-1)$-Kunz word. Given the placement of $q$'s, the operation is injective, so for each placement we have at most $\# \cK_{q-1}(\NN , \ell)$ ways to select the remaining entries. The total number of Kunz words in this case is at most $\left( \sqrt{\ell} (\eps \ell)^{\sqrt{\ell}} \right)^{C} \cdot \# \cK_{q-1} (\NN , \ell)$.

    \subparagraph{Case 2: Some heavy chunk.} Select the largest $k$ for which $W_k$ is heavy, that is,~pick the rightmost heavy chunk. Let $\ell_k$ be the length of $W_k$ which is $t$ unless $(k+1)t > \ell$. Since $W$ has Frobenius number $f$, we must have $kt+1 < f-(\ell+1)(q-1) = \alpha (\ell+1)$. 

    The idea is to apply~\Cref{thm:tail-heavy} to the left and right sides of $W_k$; see~\Cref{fig:heavy}, where jagged segments indicate heavy chunks and arrows denote applications of the $t$-tail-heavy result. The bolded jagged region denotes the rightmost heavy chunk. 

    First, note that, by definition, the word $W_0 \cdots W_k$ is $\ell_k$-tail-heavy of depth $q$. Therefore, by~\Cref{thm:tail-heavy} we can select the first $a_k := kt+\ell_k$ entries in at most $\ell_kq^{\ell_k}c_q^{a_k + \sqrt{a_k}+10}$ ways. 

    As for the later chunks, first fix the placement of $q$'s amongst the last $C-k$ chunks and then use the operation from~\Cref{lem:decrease} on each chunk $W_{k}$, $W_{k+1}$, \dots, $W_{C}$ to obtain new chunks $V_k, V_{k+1}, \dots, V_{C}$; this map is injective given the placement of the $q$'s. Let $\operatorname{rev}(W)$ denote the reverse of the word $W$. Then note that the word
    \[V^* := \operatorname{rev}(V_{k+1} \dots V_{C}) \operatorname{rev}(V_{k})\]
    is $\ell_k$-tail-heavy of depth $q-1$ by the Kunz conditions.

    Hence, if we fix the placement of $q$'s amongst the last $b_k := \ell - a_k$ entries, there are at most $\ell_k q^{\ell_k} c_q^{b_k + \sqrt{b_k} + 10}$ ways to designate $V^*$, which in turn designates the other entries. Since the chunks to the right of $W_k$ are not heavy, there are at most $\left( \sqrt{\ell} (\eps\ell)^{\sqrt{\ell}} \right)^{C-k}$ placements of $q$'s in the last $b_k$ entries. Thus, there are at most $\left( \sqrt{\ell} (\eps\ell)^{\sqrt{\ell}} \right)^{C-k} \ell_kq^{\ell_k} c_q^{b_k + \sqrt{b_k}+10}$ ways to choose the last $b_k$ entries. The total number of Kunz words in this case is at most
    \[ \sum_{kt+1 < \alpha(\ell+1)} \left( \sqrt{\ell} (\eps \ell)^{\sqrt{\ell}} \right)^{C - k} (\ell_k)^2 q^{2\ell_k} c_q^{a_k + \sqrt{a_k}+10} c_{q-1}^{b_k + \sqrt{b_k}+10}.\]

    \begin{figure}
    \begin{tikzpicture}[scale=1.8]
\draw (0,0)--(8,0);
\foreach \i in {0,...,8} {
\draw (\i,0.05)--(\i,-0.05);
}
\node at (0,-0.05) [below] {\small $w_1$};
\node at (8,-0.05) [below] {\small $w_\ell$};
\node at (4.5,-0.1) [below] {\small $W_k$};
\draw[decorate,decoration={zigzag, amplitude=2, segment length=4}, line width = 1.5pt ] (3,0)--(4,0);
\draw[decorate,decoration={zigzag, amplitude=2, segment length=4},very thick,red, line width = 3pt] (4,0)--(5,0);
\draw[decorate,decoration={zigzag, amplitude=2, segment length=4}, line width = 1.5pt] (0,0)--(1,0);

\draw[->, line width = 1.5pt] (2,0.1) to [out=30,in=150] (4.3,0.1);
\draw[->, line width = 1.5pt] (6.5,-0.1) to [out=210,in=330] (4.7,-0.1);
\end{tikzpicture}
\caption{Proof sketch when there is at least one heavy chunk (shown jagged).}
\label{fig:heavy}
\end{figure}

  \end{quote}

\medskip

Therefore, the quantity $\cK_q(\NN , \ell)$ is bounded above by the sum of the bounds from our two cases. We now determine the asymptotics of our upper bound. The term $\left( \sqrt{\ell} (\eps \ell)^{\sqrt{\ell}} \right)^{C}$ is subexponential and, in particular, is $c_q^{o(\ell)}$. By the inductive hypothesis, $\# \cK_{q-1}( \mathbb{N} , \ell )$ is asymptotically less than $c_q^{\ell}$, so the total from the first case is overall subexponential. In the second case, we have $c_q > c_{q-1}$ so the leading asymptotic term of the sum is when $a_k$ is maximal, in which case we have $a_k = \alpha\ell+o(\ell)$, which implies $b_k = (1-\alpha)\ell + o(\ell)$. Moreover, we have $\ell_k \leq t$. Hence, we can summarize the total bound in both cases as
\[\# \cK_q(\NN, \ell) \leq \left(c_q^{o(\ell)} \right)+ \left( \ell c_q^{o(\ell)} t^2 q^{2t} c_q^{\alpha\ell + o(\ell)} c_{q-1}^{(1-\alpha)\ell+o(\ell)} \right),\]
where the first summand is the bound from Case 1 and the second summand is the bound from Case 2. This is a bound of 
\[\# \cK(\NN, \ell) \leq q^{2t} c_q^{\alpha \ell + o(\ell)} c_{q-1}^{(1-\alpha)\ell + o(\ell)}.\]
Letting $\eps$ go to $0$ gives the desired bound. 
\end{proof}

\Cref{fig:grow-fac} depicts the relationship between the ratio $f/m = q+1-\alpha$ and the growth rate $\left( \# \cK_q(f,\ell) \right)^{1/m}$ as $m$ grows large. The curve is exponential piecewise, and the growth rate approaches $f/(2m) + 1$ as $f/m$ grows large. The growth rate for all $f/m > 2$ is new.

\begin{figure}[h!]
\begin{tikzpicture}[scale=1.25]
  \draw[->, thick] (0, 0) -- (9, 0) node[right] {\small $f/m$};
  \draw[->, thick] (0, 0) -- (0, 5.5) node[above] {\small $\left(\# \mathcal{K}(f,\ell)\right)^{1/m}$};
  \draw[domain=0:1, smooth, variable=\x, black, line width = 2pt] plot( {\x}, {0});
  \draw[domain=1:2, smooth, variable=\x, red, line width = 2pt] plot( {\x}, {2^(\x-1)});
  \draw[domain=2:4, smooth, variable=\x, orange, line width = 2pt, ] plot( {\x}, {2^(3-\x)*6^(0.5*(\x-2))});
  \draw[domain=4:6, smooth, variable=\x, green, line width = 2pt, ] plot( {\x}, {3^(5-\x)*12^(0.5*(\x-4))});
  \draw[domain=6:8, smooth, variable=\x, blue, line width = 2pt, ] plot( {\x}, {4^(7-\x)*20^(0.5*(\x-6))});
  \draw[->, domain=8:9, smooth, variable=\x, violet, line width = 2pt] plot( {\x}, {5^(9-\x)*30^(0.5*(\x-8))});

  \foreach \i in {1,2,4,6,8} {
  \draw[dotted, thick] (\i,0)--(\i,5.5);
  }

  \foreach \i in {1,...,8} {
  \draw[thick] (\i,0.1)--(\i,-0.1) node[below] {\small \i};
  }
  
  \foreach \i in {1,...,5} {
  \draw[thick] (0.1,\i)--(-0.1,\i) node[left] {\small \i};
  
  }

\end{tikzpicture}

\caption{The relationship between $f/m$ and $\left(\# \cK(f,\ell)\right)^{1/m}$ for large $m$.}
\label{fig:grow-fac}
\end{figure}

\section{Counting numerical semigroups by Frobenius number}
\label{sec:count}

\subsection{General case} In 1990, Backelin~\cite{backelin_1990} established the following asymptotics on $\fr(f)$, the number of numerical semigroups with Frobenius number $f$.

\begin{theorem}[{Backelin \cite[Prop.~1]{backelin_1990}}]
  \label{thm:back1}
  The limits 
  \[C_0 = \lim_{\substack{f\to\infty \\ f \text{ even}}} 2^{-f/2} \fr(f), \quad C_1 = \lim_{\substack{f\to\infty \\ f \text{ odd}}} 2^{-f/2} \fr(f)\]
  exist and are constants.
\end{theorem}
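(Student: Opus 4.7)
The plan is to split $\fr(f) = \fr_2(f) + \fr_3(f) + \sum_{q \geq 4} \fr_q(f)$ and determine the limit of $2^{-f/2}$ times each summand, separately for each parity of $f$. For depth $2$, \Cref{prop:2-3-count}(i) gives immediately $\fr_2(f) = \sum_{\ell = \lceil (f-1)/2 \rceil}^{f-2} 2^{f-2-\ell} = 2^{\lfloor (f-1)/2 \rfloor} - 1$, so $2^{-f/2} \fr_2(f) \to 1/2$ for $f$ even and $\to 1/\sqrt{2}$ for $f$ odd.

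For depth $3$, I would reindex the sum from \Cref{prop:2-3-count}(ii) by $j = f-2-2\ell$ (which forces $j \equiv f \pmod{2}$ and $1 \leq j \leq (f-2)/3$) to obtain
\[
  \frac{\fr_3(f)}{2^{f/2}} = \frac{1}{2} \sum_{\substack{1 \leq j \leq (f-2)/3 \\ j \equiv f \pmod{2}}} 2^{-3j/2}\, \# \cK_3(\ah, j).
\]
To take $f \to \infty$ with fixed parity, I would need the full series $\sum_{j \equiv f \pmod{2},\, j \geq 1} 2^{-3j/2}\, \#\cK_3(\ah, j)$ to converge. This follows from \Cref{thm:frob-count} applied with $q=3$, $m = j+1$, $f = 3j+2$ (so that $\alpha = j/(j+1) \to 1$): it yields $\#\cK_3(\ah, j) = 6^{j/2 + o(j)}$, so the general term is $(3/4)^{j/2 + o(j)}$ and decays geometrically.

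The main obstacle is showing $\sum_{q \geq 4} \fr_q(f) = o(2^{f/2})$ with uniformity in $q$, since \Cref{cor:q-depth} produces an $o(f)$ term a priori depending on $q$. By \Cref{lem:annoying}, $c_{q-1}^{1/(q-1)}$ is decreasing for $q \geq 3$, so $c_{q-1}^{1/(q-1)} \leq c_3^{1/3} = 6^{1/6} < \sqrt{2}$ whenever $q \geq 4$. To obtain a uniform tail bound, I would split the sum: for the three values $q \in \{4,5,6\}$ apply \Cref{cor:q-depth} directly (each term is individually $o(2^{f/2})$), and for $q \geq 7$ apply the crude bound \Cref{cor:stupid-f}, $\fr_q(f) \leq f \cdot q^{f/(q-1)}$. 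Since $(\log_2 q)/(q-1)$ is decreasing in $q$ with value $< 1/2$ for $q \geq 7$, the tail sum is at most $f \cdot \sum_{q=7}^{f+1} 7^{f/6} \leq f^2 \cdot 7^{f/6} = o(2^{f/2})$.

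Combining all three pieces gives the existence of the two limits, with the explicit formulas
\[
  C_0 = \tfrac{1}{2} + \tfrac{1}{2} \sum_{\substack{j \geq 2 \\ j \text{ even}}} 2^{-3j/2}\, \#\cK_3(\ah, j), \qquad
  C_1 = \tfrac{1}{\sqrt{2}} + \tfrac{1}{2} \sum_{\substack{j \geq 1 \\ j \text{ odd}}} 2^{-3j/2}\, \#\cK_3(\ah, j).
\]
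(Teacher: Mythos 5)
Your proof is correct and follows essentially the same route as the paper: decompose $\fr(f)$ by depth, apply the exact counts of \Cref{prop:2-3-count} for depths $2$ and $3$, and bound the depth-$\geq 4$ contribution by splitting at a finite cutoff (your $q = 7$ versus the paper's $q = 100$ is a harmless variant). You also correctly make explicit the convergence of the depth-$3$ series $\sum_j 2^{-3j/2}\,\#\cK_3(\ah, j)$ via \Cref{thm:frob-count} with $q = 3$, a point the paper leaves implicit in \Cref{cor:back-const}.
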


The proof uses the estimate that there are at most $3 \cdot 2^{f/2}$ numerical semigroups with $m \geq f/4$, namely with depth at most $4$. In this section, we obtain an estimate akin to Backelin's result for numerical semigroups of any depth by proving~\Cref{cor:q-depth}, which coupled with~\Cref{prop:2-3-count} implies a strengthening of~\Cref{thm:back0}.

\begin{proof}[Proof of~\Cref{cor:q-depth}]

  The result holds for $q=2$ and $q=3$ by~\Cref{prop:2-3-count}, so assume $q \geq 4$.

  Let $f$ be sufficiently large. \Cref{thm:frob-count} implies the number of numerical semigroups with Frobenius number $f$ and multiplicity $m$ with $(q-1)m < f \leq qm-1$ is 
  \[\left(c_q^{\alpha} c_{q-1}^{1-\alpha}\right)^{m+o(m)} =  \left(c_q^{\alpha} c_{q-1}^{1-\alpha}\right)^{f/(q-1+\alpha) + o(f)},\]
  where $\alpha = f/m - (q-1)$. Here, $m$ varies over the integers in the interval $\left[ \frac{f+1}{q}, \frac{f}{q-1} \right)$. Then by~\Cref{cor:cq-dec}, the right-hand side is asymptotically maximal when $\alpha$ is smallest, so when $m$ is maximal, in which case $m = \floor{(f-1)/(q-1)}$. Let $\ell := m-1$ and $\beta := f/\floor{(f-1)/(q-1)} - (q-1)$. Formally, if we let $\ell := m-1$ we have
  \[
  \begin{aligned}
    \# \cK_q(f,\NN) &= \sum_{m \in \left[ \frac{f+1}{q}, \frac{f}{q-1} \right)} \# \cK_q(f,\ell)
    = \sum_{m \in [ \frac{f+1}{q}, \frac{f}{q-1})} \left( c_q^{\alpha} c_{q-1}^{1-\alpha} \right)^{f/(q-1+\alpha) + o(f)} \\
    &\leq \sum_{m \in [ \frac{f+1}{q}, \frac{f}{q-1})} \left( c_q^{\beta} c_{q-1}^{1-\beta} \right)^{f/(q-1+\beta) + o(f)} \\
    &= \left( c_q^{\beta} c_{q-1}^{1-\beta} \right)^{f/(q-1+\beta) + o(f)},
\end{aligned}\]
by~\Cref{cor:cq-dec}, which shows the upper bound. We also have
  \[
  \begin{aligned}
    \# \cK_q(f,\NN) &= \sum_{m \in [ \frac{f+1}{q}, \frac{f}{q-1})} \left( c_q^{\alpha} c_{q-1}^{1-\alpha} \right)^{f/(q-1+\alpha) + o(f)} \\
    &\geq \left( c_q^{\alpha} c_{q-1}^{1-\alpha} \right)^{f/(q-1+\alpha) + o(f)}  \bigg\rvert_{m = \floor{(f-1)/(q-1)}}
    = \left( c_q^{\beta} c_{q-1}^{1-\beta} \right)^{f/(q-1+\beta) + o(f)},
\end{aligned}\]
which shows the lower bound. We have
\[\beta = \frac{f}{\floor{\frac{f-1}{q-1}}} - (q-1) = \frac{f - (q-1) \floor{\frac{f-1}{q-1}}}{\floor{\frac{f-1}{q-1}}} \leq \frac{q-2}{\floor{\frac{f-1}{q-1}}},\]
so as $f$ grows large, $\beta$ tends to $0$. Hence, $\# \cK_q(f,\NN) = \left(c_{q-1}\right)^{f/(q-1)+o(f)}$, as desired.
\end{proof}

From this result, we can prove that almost all numerical semigroups with Frobenius number $f$ are depth $2$ or $3$. This was first observed by Singhal~\cite[Cor.~1]{singhal_2022}, who derived the result as a corollary of Backelin's work~\cite[Prop.~2]{backelin_1990}.

\begin{corollary}
  \label{cor:small-depth}
  We have $\fr(f) \sim \fr_2(f) + \fr_3(f)$.
\end{corollary}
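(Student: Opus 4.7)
The plan is to show that $\sum_{q \geq 4} \fr_q(f) = o(2^{f/2})$. Since a direct computation from~\Cref{prop:2-3-count} yields $\fr_2(f) \geq 2^{f/2 - O(1)}$ (summing the geometric series in $\ell$), this bound on the tail immediately implies $\fr(f) \sim \fr_2(f) + \fr_3(f)$.

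The first step is to invoke~\Cref{lem:annoying} with $r = 0$, which says that $(c_k^{1/k})_{k \geq 2}$ is strictly decreasing. In particular, $c_{q-1}^{1/(q-1)} < c_2^{1/2} = \sqrt{2}$ for every $q \geq 4$. The second step is to split the tail $\sum_{q \geq 4}$ at a threshold $Q$ chosen so that $q^{1/(q-1)} < \sqrt{2}$ for every integer $q \geq Q$; the choice $Q = 7$ works, since $7^{1/6} \approx 1.38 < \sqrt{2}$ and the real function $x \mapsto x^{1/(x-1)}$ is decreasing on $[2,\infty)$ (check the sign of $(1 - 1/x - \log x)$).

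For the finite range $4 \leq q \leq 6$, I would apply~\Cref{cor:q-depth} termwise: each satisfies $\fr_q(f) = c_{q-1}^{f/(q-1) + o(f)} = 2^{(\log_2 c_{q-1})f/(q-1) + o(f)}$ with exponent strictly below $1/2$ by the first step, so each is $o(2^{f/2})$, and the finite sum is $o(2^{f/2})$. For the tail $q \geq 7$, I would instead use the crude~\Cref{cor:stupid-f} to get $\fr_q(f) \leq f \cdot q^{f/(q-1)} \leq f \cdot 7^{f/6}$ uniformly. Since the depth of any semigroup with Frobenius number $f$ is at most $O(f)$, there are only $O(f)$ values of $q$ to sum over, so $\sum_{q \geq 7} \fr_q(f) \leq f^2 \cdot 7^{f/6} = o(2^{f/2})$ since $7^{1/6} < \sqrt{2}$. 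Combining the two parts completes the tail bound.

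The main obstacle is that the $o(f)$ error in~\Cref{cor:q-depth} depends on $q$, so one cannot naively apply it to a range of $q$ that grows with $f$. The resolution is exactly the split above: use the sharp~\Cref{cor:q-depth} on a finite window of small $q$, where only finitely many $o(f)$ errors appear, and fall back to the dimension-free but cruder~\Cref{cor:stupid-f} on the remaining range, exploiting the fact that $q^{1/(q-1)}$ is already comfortably below $\sqrt{2}$ once $q \geq 7$.
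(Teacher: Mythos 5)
Your proof is correct and follows essentially the same strategy as the paper: split $\sum_q \fr_q(f)$ into a finite window of small $q$ where \Cref{cor:q-depth} applies termwise, and a tail where the uniform bound of \Cref{cor:stupid-f} kicks in, then observe that each piece beyond $q=3$ is exponentially smaller than $2^{f/2}$. The only cosmetic differences are your tighter threshold ($q\geq 7$ versus the paper's $q\geq 101$, both fine) and your explicit check that $x\mapsto x^{1/(x-1)}$ is decreasing, which the paper leaves implicit.
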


\begin{proof}
  Evidently $\fr_1(f) = 1$ and $f \geq q$. We cannot directly sum the bound in~\Cref{cor:q-depth} over all $q \in [2,f]$ since the $o(f)$ term in the exponent is non-uniform as $q$ varies. However, we can use the uniform bound in~\Cref{cor:stupid-f}:
  \begin{align*}
    \fr(f) &= \sum_{q = 2}^{f} \fr_q(f) \leq \sum_{q=2}^{100} \fr_q(f) + \sum_{q=101}^{f} \fr_q(f) \\
    &\leq \sum_{q=2}^{100} \fr_q(f) + \sum_{q=101}^f f \cdot q^{f/(q-1)} \\
    &= \sum_{q=2}^{100} \left( c_q \right)^{f/(q-1)+o(f)} + \sum_{q=101}^f f \cdot q^{f/(q-1)}. 
  \end{align*}
 By~\Cref{lem:annoying} with $r = 1$, the first summand has asymptotically leading terms at $q=2$ and $q=3$, which are $2^{f/2 + o(f)}$. These terms also clearly dominate the second summand for large $f$, which yields the result.
\end{proof}

\Cref{prop:2-3-count} and \Cref{cor:small-depth} collectively imply~\Cref{thm:back0}.

\begin{proof}[Proof of \Cref{thm:back0}]
  By~\Cref{cor:small-depth}, almost all numerical semigroups have depth 2 or 3. There are $2^{f/2 + O(1)}$ numerical semigroups with these depths by~\Cref{prop:2-3-count}, as desired. To show the constant term depends on the parity, see~\Cref{cor:back-const}.
\end{proof}

In fact, by summing over all possible $\ell$, we can write down the exact values of $C_0$ and $C_1$ in terms of infinite sums, lifting a result of Backelin~\cite[eq.~30]{backelin_1990} to the language of Kunz words.

\begin{corollary}
  \label{cor:back-const}
  We have that
  \[C_0 = \frac12 + \frac1{2} \sum_{j \text{ even}} \# \cK_3( \ah, j) 2^{-3j/2}, \quad C_1 = \frac1{\sqrt{2}} + \frac12 \sum_{j \text{ odd}} \# \cK_3(\ah, j) 2^{-3j/2}. \]
\end{corollary}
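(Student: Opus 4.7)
The plan is to combine \Cref{cor:small-depth}, which gives $\fr(f) \sim \fr_2(f) + \fr_3(f)$, with the exact formulas in \Cref{prop:2-3-count}. The depth-$2$ contribution will produce the constant terms $\tfrac{1}{2}$ and $\tfrac{1}{\sqrt{2}}$, while the depth-$3$ contribution will produce the infinite sums. Throughout we normalize by $2^{f/2}$ and take limits along each parity class of $f$.

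First, sum \Cref{prop:2-3-count}(i) over admissible $\ell$. Substituting $k = f - 2 - \ell$ converts this to a geometric sum
\[
\fr_2(f) \;=\; \sum_{k=0}^{K_f} 2^k,
\]
with $K_f = f/2 - 2$ if $f$ is even and $K_f = (f-3)/2$ if $f$ is odd. Evaluating the sum and dividing by $2^{f/2}$ gives $\fr_2(f)/2^{f/2} \to \tfrac{1}{2}$ for $f$ even and $\fr_2(f)/2^{f/2} \to \tfrac{1}{\sqrt{2}}$ for $f$ odd, matching the first term in each of $C_0$ and $C_1$.

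Next, reindex \Cref{prop:2-3-count}(ii) by $j = f - 2 - 2\ell$, which forces $j$ to have the same parity as $f$ and range over a set $J_f \subseteq [1,(f-2)/3]$. Since $\ell - j = (f - 2 - 3j)/2$, a direct calculation yields
\[
\frac{\fr_3(f)}{2^{f/2}} \;=\; \frac{1}{2} \sum_{j \in J_f} 2^{-3j/2}\, \#\cK_3(\ah, j).
\]
As $f \to \infty$ along each parity class, $J_f$ exhausts the positive even (respectively odd) integers, so the conclusion will follow once we can interchange the limit with the sum.

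The main obstacle, and the step that genuinely uses the strength of \Cref{thm:frob-count}, is this interchange. Specialized to $q = 3$, the theorem gives $\#\cK_3(\ah, j) = c_3^{\,j + o(j)} = 6^{j/2 + o(j)}$, so for any $\eps > 0$ and all sufficiently large $j$ the summand is dominated by $(\tfrac{3}{4} \cdot 2^{\eps})^{j/2}$, which is summable once $\eps$ is small. Dominated convergence then produces the claimed formulas, under the natural convention $\#\cK_3(\ah, 0) = 0$. Without the sharp exponent $c_3 = \sqrt{6} < 2\sqrt{2}$ one would have only the naive bound from \Cref{rem:thisishard}, under which the summand grows rather than decays, so the proof genuinely rests on the depth-$3$ case of the main theorem.
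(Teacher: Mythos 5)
Your proposal is correct and follows essentially the same route as the paper's (very terse) proof: normalize $\fr_2(f)$ and $\fr_3(f)$ from \Cref{prop:2-3-count} by $2^{f/2}$, use \Cref{cor:small-depth}, and justify passing to the limit in the depth-$3$ series via the $q=3$ case of \Cref{thm:frob-count}; the only slip is your remark that under the naive bound of \Cref{rem:thisishard} the summand ``grows,'' when in fact $8^{(j-1)/2}\cdot 2^{-3j/2}$ is a constant in $j$, which is still not summable, so the argument is unaffected.
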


\begin{proof}
  We simply divide the counts derived in~\Cref{prop:2-3-count} by the total from~\Cref{thm:back0}.
\end{proof}

Moreover, \Cref{thm:frob-count} implies \Cref{cor:q-depth-mult}, which determines sharp asymptotics on the number $\mult_q(m)$ of depth-$q$ numerical semigroups with multiplicity $m$.

\begin{proof}[Proof of \Cref{cor:q-depth-mult}]
  The result quickly follows from \Cref{thm:frob-count}, since
  \begin{align*}
    \mult_q(m) &= \#\cK_q(\NN, m-1) \\
    &= \sum_{(q-1)m < f < qm} \#\cK_q(f,m-1) 
    = \sum_{(q-1)m < f < qm} \left( c_q^{\alpha} c_{q-1}^{1-\alpha} \right)^{m + o(m)} \\
    &= c_q^{m + o(m)},
  \end{align*}
  since $\alpha = f/m - (q-1)$ ranges from $1/m$ to $(m-1)/m$ and the summand is asymptotically maximized at $\alpha = (m-1)/m$.
\end{proof}

\subsection{MED semigroups}

A numerical semigroup $\Lambda$ has \vocab{maximal embedding dimension} (abbreviated MED) if its Ap\'ery set minimally generates $\Lambda$. In this case, we say $\Lambda$ is an \vocab{MED semigroup}. In terms of Kunz words, the Kunz word $w_1 \cdots w_{\ell}$ is an MED Kunz word if $w_i + w_j > w_{i+j}$ and $w_{i} + w_j + 1 > w_{\ell+1-i-j}$. 

Let $\MED(f)$ denote the number of MED semigroups with Frobenius number $f$, and $\MED_q(f)$ the number of semigroups with Frobenius number $f$ and depth $q$. In 2022, Singhal~\cite{singhal_2022} proved that there are constants $c$ and $c'$ such that
\[c \cdot 2^{f/3} < \MED(f) < c' \cdot 2^{0.41385f},\]
and conjectured that $\lim_{f\to\infty} \log_2(\MED(f))/f$ exists. He also notes that numerical data suggests that the limit may be around $0.375$. We prove this conjecture by showing this limit to be exactly $1/3$ in \Cref{thm:med-count}, which turns out to be significantly smaller than the value suggested by numerical calculations for small values of $f$.

Given an MED Kunz word $w_1 \cdots w_{\ell}$, if we set $v_i = w_i - 1$, then $v_1 \cdots v_{\ell}$ also satisfies the Kunz conditions. It follows quickly that there is a bijection between MED semigroups of multiplicity $m$ and numerical semigroups containing $m$ (or their corresponding Kunz words); we mark down the latter condition by using the notation $\fr^{\{m\}}$ and $\cK^{\{m\}}$ in lieu of $\fr$ and $\cK$. As a result, we have the following sum.

\begin{lemma}[{\cite[Cor.~3]{singhal_2022}}]
  \label{lem:med-cnt}
  We have that \[\MED_q(f) = \sum_{m \in M_{f,q}} \fr^{\{m\}} (f-m),\]
  where $M_{f,q} := \left[\frac{f+1}{q}, \frac{f+1}{q-1}\right) \cap \NN$.
\end{lemma}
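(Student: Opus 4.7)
The plan is to formalize the bijection sketched immediately before the lemma and then partition by multiplicity.

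First, I would verify the bijection $w \mapsto v$ given by $v_i := w_i - 1$. This maps MED Kunz words of multiplicity $m$ (subject to the MED conditions $w_i + w_j > w_{i+j}$ and $w_i + w_j + 1 > w_{i+j-m}$ on the appropriate index ranges) to nonnegative words $v$ of length $m-1$ satisfying the ordinary Kunz conditions $v_i + v_j \geq v_{i+j}$ and $v_i + v_j + 1 \geq v_{i+j-m}$; the two systems of inequalities are equivalent under the shift $w = v+1$, and the reverse map $v \mapsto v+1$ is clearly inverse. The word $v$ is allowed to have zero entries, and via the extended Ap\'ery formalism it encodes a unique numerical semigroup $\Lambda'$ containing $m$ (because $v_0 = 0$ forces $m$ into $\Lambda'$), though the multiplicity of $\Lambda'$ may be smaller than $m$.

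Next, I would compute the Frobenius number of $\Lambda'$. By \Cref{prop:read-read-read}, if $w$ has depth $q$ and rightmost $q$-entry at index $j$, then $f(\Lambda) = m(q-1) + j$. After the shift, the largest positive entry of $v$ equals $q-1$, achieved at the same rightmost index $j$, so the largest gap of $\Lambda'$ is $m((q-1)-1) + j = m(q-2) + j = f - m$. (The edge case $q=1$ gives $v$ identically zero, $\Lambda' = \NN_0$, and $f - m = -1$, consistently.)

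Finally, I would translate the depth constraint into a range for the multiplicity. The condition $q = \lceil (f+1)/m \rceil$ is equivalent to $(q-1)m < f+1 \leq qm$, i.e., $m \in M_{f,q}$. Partitioning MED semigroups with Frobenius number $f$ and depth $q$ by multiplicity and applying the bijection gives
\[
\MED_q(f) \;=\; \sum_{m \in M_{f,q}} \#\{\text{MED } \Lambda : m(\Lambda) = m,\, f(\Lambda) = f\} \;=\; \sum_{m \in M_{f,q}} \fr^{\{m\}}(f - m),
\]
as required. I do not anticipate a serious obstacle here; the only subtlety is that $\Lambda'$ need not have multiplicity $m$, only that $m \in \Lambda'$, which is exactly why the right-hand side uses $\fr^{\{m\}}$ rather than a multiplicity-fixed count.
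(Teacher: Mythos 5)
Your argument is correct and follows exactly the route the paper sketches in the paragraph before the lemma (the paper itself defers to Singhal and gives only a one-sentence indication of the shift $w \mapsto w-1$). You have supplied the missing details — the equivalence of the strict MED inequalities and the non-strict Kunz inequalities under $w = v+1$, the Frobenius-number computation $f(\Lambda') = f - m$ via the rightmost maximal entry, and the translation of $q = \lceil (f+1)/m \rceil$ into the range $m \in M_{f,q}$ — all correctly.
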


Note that the numerical semigroups counted on the right-hand side have depth $q'$, which is at most $\ceil{(f+1-m)/m} = q-1$, since the multiplicity of these semigroups is at most $m$. Unfortunately, the notation in this section is a bit confusing since we juggle MED semigroups and words in $\cK^{ \{m\}}(f-m)$. We will consistently refer to the multiplicity of the MED semigroup by $m$ and the multiplicity of the corresponding depth-$q'$ word by $m' = \ell + 1$, where $\ell$ is the length of the word. We try to remind the reader of this when possible. 

We bound the number of MED semigroups for all depths $q$ and show that almost all MED semigroups are depth $3$ or $4$. To do this, we use \Cref{lem:med-cnt}, which allows us to apply our bounds for general numerical semigroups to MED numerical semigroups, lifting~\Cref{thm:frob-count} to \Cref{thm:med-count}. This is most important in~\Cref{lem:4-med}, where we estimate the number of MED semigroups with depth at least 4. The case of $q=1$ is vacuous, so we first consider $q=2$.

\begin{proposition}
  \label{prop:2-med}
  We have $\MED_2(f) \sim D_i \cdot 2^{f/4}$ for constants $D_0, \dots, D_3$ where $f \equiv i \pmod{4}$.
\end{proposition}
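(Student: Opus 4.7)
The plan is to invoke Lemma~\ref{lem:med-cnt} with $q=2$ and reduce the count to a tail-dominated geometric sum of Backelin's estimates. First, starting from
\[\MED_2(f) = \sum_{m \in M_{f,2}} \fr^{\{m\}}(f-m), \qquad M_{f,2} = \left[(f+1)/2,\, f+1\right) \cap \NN,\]
I would observe that every $m \in M_{f,2}$ satisfies $m \geq (f+1)/2 > f-m$, so $m$ exceeds the Frobenius number $f-m$ and hence lies automatically in any semigroup with that Frobenius number. Therefore $\fr^{\{m\}}(f-m) = \fr(f-m)$, and substituting $k = f - m$ gives
\[\MED_2(f) = \sum_{k=0}^{K} \fr(k), \qquad K := \lfloor (f-1)/2 \rfloor.\]

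Next, I would apply Theorem~\ref{thm:back1} to write $\fr(k) = C_{k \bmod 2} \cdot 2^{k/2}(1+o(1))$ and split the sum by the parity of $k$. Within each parity the main terms form a geometric progression in $2^{k/2}$ with ratio $2$, so each sum is asymptotic to a constant multiple of its final term. A short calculation then yields
\[\MED_2(f) \sim D(K) \cdot 2^{K/2},\]
where $D(K) = 2C_0 + \sqrt{2}\, C_1$ if $K$ is even and $D(K) = \sqrt{2}\, C_0 + 2C_1$ if $K$ is odd.

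To finish, a case analysis on $f \bmod 4$ pins down both the parity of $K$ (odd for $f \equiv 0, 3$; even for $f \equiv 1, 2$) and the constant ratio $2^{K/2}/2^{f/4} \in \{2^{-1/2},\, 2^{-1/4}\}$. Combining these two pieces of data yields four residue-dependent constants $D_0, D_1, D_2, D_3$ with $\MED_2(f) \sim D_i \cdot 2^{f/4}$ for $f \equiv i \pmod 4$.

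The one technical point that merits care is upgrading the per-term error $\fr(k) - C_{k \bmod 2} 2^{k/2} = o(2^{k/2})$ into a $o(2^{K/2})$ error in the whole sum—otherwise one obtains only a $\Theta$-bound rather than a genuine asymptotic equivalence. This is a standard $\eps$-tail estimate: for each $\eps > 0$ one picks a threshold $k_0$ beyond which the per-term error is bounded by $\eps \cdot 2^{k/2}$, splits the sum at $k_0$, and uses the geometric growth of the tail to control the aggregated error by $O(\eps \cdot 2^{K/2})$.
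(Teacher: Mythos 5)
Your proposal is correct and takes essentially the same route as the paper: invoke Lemma~\ref{lem:med-cnt}, note that $m > f-m$ forces $\fr^{\{m\}}(f-m) = \fr(f-m)$, and recognize $\MED_2(f)$ as the partial sum $\sum_{k \le \lfloor (f-1)/2 \rfloor} \fr(k)$, which is geometric-dominated by Backelin's asymptotic. The paper compresses the final constant computation into a one-line reference to Corollary~\ref{cor:back-const}, whereas you work out the four residue-dependent constants and the uniform error bookkeeping explicitly, but the underlying argument is identical.
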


\begin{proof}
  By~\Cref{lem:med-cnt}, we have 
  \[\MED_2(f) = \sum_{m \in M_{f,2}} \fr^{\{m\}}(f-m) = \sum_{k = 1}^{\floor{(f-1)/2}} \fr(k),\]
  since $m > f-m$. Then we are done; we can express $D_i$ in terms of $C_0$ and $C_1$ by \Cref{cor:back-const}.
\end{proof}

\begin{remark}
  It happens that $\MED_2(f)$ is the sequence \href{http://oeis.org/A103580}{\texttt{A103580}} in the OEIS with each entry repeated twice; the OEIS sequence counts how many nonempty subsets of $\{1,\dots,n\}$ contain all its pairwise sums that are at most $n$, which is exactly the number of nontrivial semigroups with Frobenius number at most $n$. Hence, $\texttt{A103580}(n) = 2^{n/2+O(1)}$, which is (as far as we know) a new result that answers a question of Kitaev on well-based sets on path-schemes~\cite{kitaev_2004}.
\end{remark}

We now estimate the number of MED semigroups of depth at least $3$.

\begin{proposition}
  \label{prop:3-med}
  We have $\MED_3(f) = 2^{f/3 + O(1)}$.
\end{proposition}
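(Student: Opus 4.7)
The plan is to apply \Cref{lem:med-cnt}, which rewrites
\[\MED_3(f) = \sum_{m \in M_{f,3}} \fr^{\{m\}}(f-m),\]
where $M_{f,3} = [\tfrac{f+1}{3},\, \tfrac{f+1}{2}) \cap \NN$ and $\fr^{\{m\}}(f')$ counts numerical semigroups with Frobenius number $f'$ that contain $m$. My goal is to establish matching bounds of order $2^{f/3+O(1)}$ on this sum.

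For the upper bound, I will combine the trivial inequality $\fr^{\{m\}}(f-m) \leq \fr(f-m)$ with Backelin's \Cref{thm:back0} to obtain $\fr(f-m) = O(2^{(f-m)/2})$ uniformly in $f-m$. Since $m \geq (f+1)/3$ on $M_{f,3}$, the exponent $(f-m)/2$ is maximized at $m_0 := \lceil(f+1)/3\rceil$ where it equals $f/3 + O(1)$, and each unit increment of $m$ shrinks $2^{(f-m)/2}$ by a factor $2^{-1/2}$. The resulting sum is therefore a convergent geometric tail and is bounded by $C \cdot 2^{f/3}$ for an absolute constant $C$.

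For the lower bound, I will restrict to the single value $m = m_0$ and directly exhibit $2^{f/3+O(1)}$ MED semigroups of multiplicity $m_0$ and Frobenius number $f$. Via the bijection described just before \Cref{lem:med-cnt}---which sends an MED Kunz word $w_1 \cdots w_{m_0-1}$ of multiplicity $m_0$ and Frobenius number $f$ to the vector $v = w - 1$ of a semigroup with Frobenius number $f - m_0$ containing $m_0$---it suffices to find that many numerical semigroups of multiplicity exactly $m_0$, depth exactly $2$, and Frobenius number $f - m_0$. Any such $v$ has entries in $\{1, 2\}$, so the lift $w = v + 1$ has entries in $\{2, 3\}$ and satisfies both MED inequalities $w_i + w_j > w_{i+j}$ and $w_i + w_j + 1 > w_{i+j-m_0}$, trivially from $w_i + w_j \geq 4$. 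By \Cref{prop:2-3-count}(i), the count of such $v$'s equals $2^{(f-m_0) - 2 - (m_0 - 1)} = 2^{f - 2m_0 - 1} = 2^{f/3 + O(1)}$, once one checks that $f - m_0 \in (m_0, 2m_0)$, which is immediate from the choice of $m_0$.

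Combining the two bounds yields $\MED_3(f) = 2^{f/3 + O(1)}$. Nothing in this plan is particularly delicate: \Cref{thm:back0} and \Cref{prop:2-3-count} supply the upper and lower bounds respectively. The only step requiring any care is the verification that the lifted depth-$2$ words satisfy the strict MED inequalities, but this follows at once from the trivial bound $w_i + w_j \geq 4$ against the maximum possible right-hand side of $3$.
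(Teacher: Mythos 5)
Your proof is correct and follows essentially the same route as the paper: the upper bound via $\fr^{\{m\}}(f-m) \leq \fr(f-m) = 2^{(f-m)/2 + O(1)}$ summed as a geometric series is identical, and the lower bound exhibits a family of depth-2 Kunz words lifted to MED Kunz words by adding $1$ to each entry. Your lower bound is if anything slightly cleaner than the paper's: by fixing the multiplicity of the lowered semigroup to be exactly $m_0$ you make the containment of $m_0$ automatic, whereas the paper constructs a family with a specific word suffix and then discards up to half of it to ensure containment.
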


\begin{proof} We show the lower and upper bounds separately.

  \paragraph{Lower bound.} Let $m = \ceil{(f+1)/3}$ and abbreviate $n := f - m = 2f/3 + O(1)$. Consider the following families of $2$-Kunz words with Frobenius number $n$:
  \begin{itemize}
    \item $\mathcal{M} = \{ w_1 \cdots w_{(n-4)/2} 21 : w_i \leq 2\}$ if $n$ is even; and
    \item $\mathcal{M} = \{ w_1 \cdots w_{(n-3)/2} 2 : w_i \leq 2\}$ if $n$ is odd.
  \end{itemize}
  These families consist of $2^{(n-4)/2}$ and $2^{(n-3)/2}$ words, respectively. Since the restriction of containing $m$ forces at most one entry in a Kunz word to take the value 2, at least half of the words in $\mathcal{M}$ correspond to semigroups containing $m$, so $\MED_3(f) \geq 2^{(n-4)/2 - 1}$, which is enough.

  \paragraph{Upper bound.} Note that
  \begin{align*}
    \MED_3(f) &= \sum_{m \in M_{f,3}} \fr^{\{m\}}(f-m) \le \sum_{m \in M_{f,3}} \fr (f-m, \NN) \\
    &= \sum_{m \in M_{f,3}} 2^{(f-m)/2 + O(1)} = 2^{f/3 + O(1)},
  \end{align*}
  as desired. 
\end{proof}

\begin{lemma}
  \label{lem:4-med}
  For $q \geq 4$, we have that \[\limsup_{f \to \infty} \MED_q(f)^{1/f} \leq \left(c_{q-2}\right)^{1/(q-1)}.\] 
\end{lemma}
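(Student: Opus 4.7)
By Lemma \ref{lem:med-cnt}, $\MED_q(f) = \sum_{m \in M_{f,q}} \fr^{\{m\}}(f-m)$. Since any numerical semigroup containing $m$ as an element has multiplicity $m^* \leq m$, I bound $\fr^{\{m\}}(f-m)$ by the number of semigroups with Frobenius number $n := f-m$ and multiplicity at most $m$. The defining inequalities of $M_{f,q}$ give $(n+1)/m \in (q-2, q-1]$, hence $\lceil (n+1)/m \rceil = q-1$, so any such semigroup has depth $q^* \geq q-1 \geq 3$, a regime where Theorem \ref{thm:frob-count} applies.

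\paragraph{Inner maximization} For fixed depth $q^*$ and multiplicity $m^*$, Theorem \ref{thm:frob-count} bounds the count by $(c_{q^*}^{\alpha^*} c_{q^*-1}^{1-\alpha^*})^{m^* + o(m^*)}$ with $\alpha^* := n/m^* - (q^*-1)$. I analyze the $q^* = q-1$ case first: Corollary \ref{cor:cq-dec} (with $q$ replaced by $q-1$ and $r = 1$) shows the unconstrained maximum over $m^*$ occurs at $\alpha^* = 0$, i.e., $m^* = n/(q-2)$, which exceeds $m$; hence the constraint $m^* \leq m$ binds, and the maximum is attained at $m^* = m$, giving $(c_{q-1}^\beta c_{q-2}^{1-\beta})^{m + o(m)}$ where $\beta := f/m - (q-1) \in [0,1]$. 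For depths $q^* \geq q$, the analogous maximum is at most $c_{q^*-1}^{n/(q^*-1) + o(n)}$, which by Lemma \ref{lem:annoying} is dominated by the $q^* = q-1$ contribution.

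\paragraph{Outer maximization and obstacle} Substituting $m = f/(\beta + q-1)$, I rewrite each summand as $F(\beta)^{f+o(f)}$ where $F(t) := (c_{q-1}^t c_{q-2}^{1-t})^{1/(q-1+t)}$. Corollary \ref{cor:cq-dec} (now with $q$ replaced by $q-1$ and $r = 0$) gives that $F$ is decreasing on $[0,1]$, so the maximum occurs at $\beta = 0$, yielding $F(0) = c_{q-2}^{1/(q-1)}$. Since $|M_{f,q}|$ is polynomial in $f$ and absorbed into $o(f)$, this gives $\MED_q(f) \leq c_{q-2}^{f/(q-1) + o(f)}$, hence the claimed $\limsup$. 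The main obstacle is that a naive bound $\fr^{\{m\}}(f-m) \leq \fr(f-m) \sim 2^{(f-m)/2}$ is exponentially too weak; the argument instead relies on the depth lower bound $q^* \geq q-1$ forced by $m^* \leq m$, followed by a careful double application of Corollary \ref{cor:cq-dec}.
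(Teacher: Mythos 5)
Your proposal is correct and follows essentially the same route as the paper's: decompose via Lemma~\ref{lem:med-cnt}, observe that a semigroup with Frobenius number $f-m$ containing $m$ has multiplicity at most $m$ and hence depth at least $q-1$, apply Theorem~\ref{thm:frob-count}, and carry out the two-stage maximization (first over the inner multiplicity, finding $m^*=m$ optimal, then over $m \in M_{f,q}$, finding $\beta=0$ optimal) while dismissing the depths $q^*\ge q$ as asymptotically dominated. The only cosmetic difference is that you phrase the inner optimization as a second instance of Corollary~\ref{cor:cq-dec} with $r=1$, whereas the paper differentiates the exponent in $m'$ and invokes Lemma~\ref{lem:annoying} with $r=0$ directly; these are interchangeable.
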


\begin{proof}
  Note that
  \begin{align*}
    \MED_q(f) &= \sum_{m \in M_{f,q}} \fr^{\{m\}} (f-m) \\
    &\leq \sum_{m \in M_{f,q}} \fr^{\{m\}}_{q-1} (f-m) + \sum_{\substack{m \in M_{f,q}\\ q' \geq q}} \fr_{q'} (f-m)\\
    &= \sum_{m \in M_{f,q}} \fr^{\{m\}}_{q-1} (f-m) + \sum_{\substack{m \in M_{f,q}\\ q' \geq q}} (c_{q'-1})^{(f-m)/(q'-1) + o(f)},
  \end{align*}
  the last equality by~\Cref{cor:q-depth}.
  We have the inequality
  \[\frac{f-m}{q'-1} \leq \frac{f - (f+1)/q}{q'-1} < f \cdot \frac{q-1}{q(q'-1)}.\]
  Hence, the second summand can be bounded:
  \[\sum_{\substack{m \in M_{f,q}\\ q' \geq q}} (c_{q'-1})^{(f-m)/(q'-1) + o(f)} \leq \sum_{\substack{m \in M_{f,q}\\ q' \geq q}} \left(c_{q'-1}^{1/(q'-1)}\right)^{f(q-1)/q + o(f)} \leq \sum_{\substack{m \in M_{f,q}}} (c_{q-1})^{f/q + o(f)},\]
  since $c_{q'-1}^{1/(q'-1)}$ is maximized at $q' = q$ by \Cref{lem:annoying} with $r=0$. The lemma also implies that $c_{q-1}^{1/q} < c_{q-2}^{1/(q-1)}$, so it suffices to show the upper bound on the first summand only.

  Letting $\alpha = (f-m)/m' - (q-2)$, we write the following equalities, the second given by~\Cref{thm:frob-count}:
  \begin{align*}
    \sum_{m \in M_{f,q}} \fr_{q-1}^{\{m\}} (f-m) &= \sum_{m \in M_{f,q}} \sum_{m' \leq m} \# \cK_{q-1}^{\{m\}} (f-m;m')\\
    &= \sum_{m \in M_{f,q}} \sum_{m' \leq m} \left( c_{q-1}^{\alpha} c_{q-2}^{1-\alpha} \right)^{m' + o(m')}\\
    &= \sum_{m \in M_{f,q}} \sum_{m' \leq m} \left( c_{q-1}^{f-m-(q-2)m'} c_{q-2}^{(q-1)m'-(f-m)} \right)^{1 + o(1)}.
  \end{align*}
  \Cref{lem:annoying} for $r=0$ tells us that for fixed $m$, the summand is maximized when $m' = m$, at which point the summand is equal to
  \[\left( c_{q-1}^{f-(q-1)m} c_{q-2}^{qm-f} \right)^{1 + o(1)} = \left( c_{q-1}^{f/m-q+1} c_{q-2}^{q-f/m} \right)^{m+o(m)}.\]
  If we let $m = (f+1)/(q-1+r)$, then~\Cref{cor:cq-dec} tells us that the summand is maximized at $r = 0$, at which point the summand is $c_{q-2}^{f/(q-1) + o(f)}$. This is enough to show the upper bound.
\end{proof}

We now derive~\Cref{thm:med-count}. 

\begin{proof}[Proof of~\Cref{thm:med-count}]

  \Cref{prop:3-med} gives the lower bound. As for the upper bound, we cannot sum over all $q$ by the same analytic issue as the previous section, so~\Cref{cor:stupid-f} will be of use to us. Namely, we can write
  \begin{align*}
    \MED(f) &= \sum_{q = 2}^{f} \MED_q(f) = \sum_{q=2}^{100} \MED_q(f) + \sum_{q=101}^{f} \MED_q(f) \\
    &\leq \sum_{q=2}^{100} \MED_q(f) + \sum_{q=101}^f f \cdot q^{f/(q-1)} \\
    &= 2^{f/3+O(1)} + \sum_{q=4}^{100} \left( c_{q-1} \right)^{f/(q-2)+o(f)} + \sum_{q=101}^f f \cdot q^{f/(q-1)}, 
  \end{align*}
  where the last line is given by~\Cref{prop:2-med}, \Cref{prop:3-med}, and \Cref{lem:4-med}. By~\Cref{lem:annoying} for $r=1$, the leading terms are at $q=3$ and $q=4$, which are $2^{f/3 + o(f)}$. These terms also clearly dominate the second summand for large $f$, which yields the result.
\end{proof}

\begin{remark}
  The proof relies on~\Cref{lem:annoying}, and in particular the fact that $c_3^{1/4} = 6^{1/8} < 2^{1/3}$, which is surprisingly sharp given that $2^{1/3} - 6^{1/8} \approx 8.8 \cdot 10^{-3}$. Hence, we esssentially need the full strength of~\Cref{thm:frob-count} for $q=3$; the naive bound from~\Cref{rem:thisishard} and the bound of Backelin~\cite[eq.~13]{backelin_1990} give constants of $8^{1/8}$ and $(22/3)^{1/8}$, respectively, which are certainly not enough.
\end{remark}

It should be possible to write down an analogue of~\Cref{prop:2-3-count} for MED semigroups of depth 3 and 4, which would improve the bound to $2^{f/3 + O(1)}$ and yield a result analogous to~\Cref{cor:back-const} but with constant factor depending on $f \pmod{6}$. However, we do not execute this here.

\section{Distributions on numerical semigroups}
\label{sec:stats}

In this section, we discuss the distribution of multiplicity and genus over numerical semigroups with fixed Frobenius number. 

\subsection{Distribution of multiplicity}

To begin, it turns out that our analysis in~\Cref{prop:2-3-count} immediately implies the following distribution result.

\begin{theorem}
  \label{thm:mult-dist}
  Let $\Lambda_f$ be a random numerical semigroup with Frobenius number $f$ (under the uniform distribution). Then for any integer $k$, we have that:
  \begin{align*}
  \lim_{\substack{f\to \infty \\ f \text{ even}}} \mathbb{P} [ f - 2m(\Lambda_f) = 2k] &= \begin{cases}
    C_0^{-1} 2^{k-1} & \text{if }k < 0, \\
    0 & \text{if }k = 0, \\
    C_0^{-1} 2^{-3k-1} \# \cK_3(\ah; 2k) & \text{if }k > 0;
  \end{cases} \\ \\
  \lim_{\substack{f\to \infty \\ f \text{ odd}} }\mathbb{P} [ f - 2m(\Lambda_f) = 2k+1] &= \begin{cases}
    C_1^{-1}2^{(2k-1)/2} & \text{if }k < 0, \\
    C_1^{-1} 2^{-(6k+5)/2} \# \cK_3(\ah; 2k+1) & \text{if }k \geq 0.
  \end{cases}
\end{align*}
\end{theorem}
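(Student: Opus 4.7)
The strategy is to combine the exact enumeration of depth-$2$ and depth-$3$ Kunz words from \Cref{prop:2-3-count} with Backelin's asymptotic $\fr(f) \sim C_i \cdot 2^{f/2}$ from \Cref{thm:back0}. For each fixed integer $k$, I will first observe that the event $f - 2m(\Lambda_f) = 2k$ (resp.\ $2k+1$) forces the multiplicity to the specific value $m = (f - 2k)/2$ (resp.\ $m = (f - 2k - 1)/2$), and that for all sufficiently large $f$ of the appropriate parity, the depth $q = \ceil{(f+1)/m}$ of any numerical semigroup with this $(f,m)$ is unambiguously determined: the depth is $2$ whenever $m > f/2$ (i.e.\ $k \leq -1$ in both parities) and the depth is $3$ whenever $m$ is just below $f/2$ ($k \geq 1$ for $f$ even, $k \geq 0$ for $f$ odd). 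The case $k = 0$ with $f$ even is exceptional: there $m = f/2$ would force $m \mid f$ and hence $f \in \Lambda$, contradicting $f$ being the Frobenius number, so no such semigroup exists and the limiting probability is $0$.

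Next, I will substitute $\ell = m - 1$ into \Cref{prop:2-3-count} to obtain the number of relevant numerical semigroups in closed form. For the depth-$2$ cases, the formula $\# \cK_2(f,\ell) = 2^{f-2-\ell}$ simplifies to $2^{f/2 + k - 1}$ for $f$ even and to $2^{(f + 2k - 1)/2}$ for $f$ odd. For the depth-$3$ cases, the formula $\# \cK_3(f,\ell) = 2^{\ell - j} \cdot \# \cK_3(\ah, j)$ with $j = f - 2 - 2\ell$ yields $j = 2k$ for $f$ even and $j = 2k+1$ for $f$ odd after a short calculation; plugging in gives counts $2^{f/2 - 3k - 1} \cdot \# \cK_3(\ah, 2k)$ and $2^{(f - 6k - 5)/2} \cdot \# \cK_3(\ah, 2k+1)$ respectively.

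Finally, dividing each of these counts by the asymptotic total $\fr(f) \sim C_i \cdot 2^{f/2}$ from \Cref{thm:back0} produces precisely the formulas stated in the theorem. Crucially, no separate control of depths $\geq 4$ is needed here: once $k$ is fixed and $f$ is large, the prescribed multiplicity $m$ lies unambiguously in either the depth-$2$ or depth-$3$ regime, so Proposition \ref{prop:2-3-count} already gives the exact contribution. Accordingly, there is no real obstacle in this argument; the proof is essentially a direct substitution into the formulas of \Cref{prop:2-3-count} normalized by \Cref{thm:back0}, with the only mild subtlety being the parity-based vanishing at $k=0$ for $f$ even.
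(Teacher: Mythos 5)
Your proposal is correct and follows essentially the same route as the paper: read off the exact depth-$2$ and depth-$3$ counts from Proposition~\ref{prop:2-3-count} at the multiplicity pinned down by $k$, then normalize by Backelin's asymptotic $\fr(f)\sim C_i\cdot 2^{f/2}$ from Theorem~\ref{thm:back0}. Your added observation that fixing $k$ automatically places the relevant semigroups in depth $2$ or $3$ for large $f$ (so that Corollary~\ref{cor:small-depth} is needed only implicitly, via Backelin's theorem) is a clean way of organizing what the paper states more tersely.
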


This is similar to a result of Zhu~\cite[Thm.~6.1]{zhu_2022}, which determines the limiting distribution of $f-2m$ for numerical semigroups of fixed genus. Figures~\ref{fig:even-f} and \ref{fig:odd-f} depict the limiting distributions for even and odd $f$, respectively.

  \begin{figure}[h!]
    \begin{tikzpicture}[scale=0.8]
    \draw[<->,thick] (-6,0)--(6,0) node[below right] {\small $f-2m$};
    \foreach \hh/\xx [evaluate=\yy using int(2*\xx)] in {0.0625/-5,0.125/-4,0.25/-3,0.5/-2,1/-1,0/0,0.5/1,0.4375/2,0.375/3,0.3256/4,0.2754/5} {
    \filldraw[black] (-0.1+\xx,0)--(0.1+\xx,0)--(0.1+\xx,4*\hh)--(-0.1+\xx,4*\hh)--cycle;
    }
    \foreach \xx in {-10, ..., 10} {
    \node[below] at (\xx/2,0) {\footnotesize \xx};
    }
    \draw[->,thick] (0,0)--(0,4.4);
    \draw (-0.1,4)--(0.1,4) node[right] {\footnotesize $\frac1{4C_0}$};
  \end{tikzpicture}

  \caption{Limiting distribution of $f-2m$ for even $f$.}
    \label{fig:even-f}
\end{figure}

\begin{figure}[h!]
  \begin{tikzpicture}[scale=0.8]
    \draw[<->,thick] (-6,0)--(6,0) node[below right] {\small $f-2m$};
    \foreach \hh/\xx [evaluate=\yy using int(2*\xx)] in {0.0625/-4.5,0.125/-3.5,0.25/-2.5,0.5/-1.5,1/-0.5,0.5/0.5,0.4375/1.5,0.390625/2.5,0.3349609375/3.5,0.2745361328125/4.5} {
    \filldraw[pattern = north east lines, thick] (-0.1+\xx,0)--(0.1+\xx,0)--(0.1+\xx,4*\hh)--(-0.1+\xx,4*\hh)--cycle;
    }
    \draw[->,thick] (0,0)--(0,4.4);
    \foreach \xx in {-10, ..., 10} {
    \node[below] at (\xx/2,0) {\footnotesize \xx};
    }
    \draw (-0.1,4)--(0.1,4) node[right] {\footnotesize $\frac1{2C_1\sqrt{2}}$};
  \end{tikzpicture}

  \caption{Limiting distribution of $f-2m$ for odd $f$.}
  \label{fig:odd-f}
\end{figure}

\begin{proof}
  By~\Cref{cor:small-depth}, we only have to consider semigroups of depth 2 and 3. Now, we may use~\Cref{prop:2-3-count} and the fact that there are $(C_i + o(1))2^{f/2}$ numerical semigroups with Frobenius number $f$ to prove the result, where $f \equiv i \pmod{2}$.
\end{proof}

This is a strengthening of the following result of Backelin, which shows that asymptotically, almost all semigroups have multiplicity close to $f/2$:

\begin{proposition}[Backelin \cite{backelin_1990}]
  For any $\eps > 0$, there exists an integer $N$ such that for every positive integer $f$, there are less than $\eps \cdot 2^{f/2}$ semigroups with Frobenius number $f$ and multiplicity outside of $[f/2 - N, f/2 + N]$.
\end{proposition}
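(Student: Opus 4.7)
The plan is to decompose numerical semigroups with Frobenius number $f$ by depth. By \Cref{cor:small-depth}, the number of semigroups with depth at least $4$ is $o(2^{f/2})$, so there exists a threshold $F$ such that for every $f > F$ this quantity is less than $\tfrac{\eps}{3}\cdot 2^{f/2}$. For the finitely many $f \leq F$, any $N \geq F$ makes the statement trivial since $m \leq f$. It therefore suffices, uniformly in $f > F$, to control the depth-$2$ and depth-$3$ contributions away from the center $m = f/2$.

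For depth $2$, \Cref{prop:2-3-count}(i) gives $\#\cK_2(f, m-1) = 2^{f-m-1}$ and forces $m > f/2$, so only the upper tail $m > f/2 + N$ can be nonempty; summing the geometric series yields a bound of at most $2^{f/2-N}$, which is less than $\tfrac{\eps}{3}\cdot 2^{f/2}$ for $N$ sufficiently large (uniformly in $f$). For depth $3$, \Cref{prop:2-3-count}(ii) with $j := f - 2m$ gives
\[
\#\cK_3(f, m-1) \;=\; 2^{f/2 - 3j/2 - 1}\cdot \#\cK_3(\ah, j),
\]
and since depth-$3$ semigroups have $m < f/2$, only the lower tail $j > 2N$ contributes. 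The total depth-$3$ contribution is $2^{f/2 - 1}\sum_{j > 2N} 2^{-3j/2}\#\cK_3(\ah, j)$, so it suffices to make this tail small.

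The main obstacle is showing that the series $\sum_{j \geq 0} 2^{-3j/2}\#\cK_3(\ah, j)$ converges. As \Cref{rem:thisishard} explicitly warns, the naive bound $\#\cK_3(\ah, j) \leq 8^{j/2}$ sits exactly at the boundary and is not summable against the weight $2^{-3j/2}$, so the full strength of \Cref{thm:frob-count} is needed. Applying that theorem at $q = 3$ with $m = j+1$ and $\alpha = j/(j+1)$ yields $\#\cK_3(\ah, j) = 6^{j/2 + o(j)}$, so $2^{-3j/2}\#\cK_3(\ah, j) = (3/4)^{j/2 + o(j)}$ and the series converges. (Equivalently, this convergence is already implicit in \Cref{cor:back-const}, where these very sums define $C_0$ and $C_1$.) Choosing $N$ to dominate the three requirements above (and $N \geq F$) yields the claimed uniform bound.
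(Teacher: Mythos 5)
Your proof is correct. The paper itself does not prove this proposition (it is attributed to Backelin), but it does prove the strictly stronger \Cref{thm:mult-dist}, and the proof there uses exactly the ingredients you use: \Cref{cor:small-depth} to reduce to depths $2$ and $3$, and \Cref{prop:2-3-count} for the exact counts at those depths. So your argument is essentially the same approach, with the tail estimates written out explicitly. One minor remark: to get summability of $\sum_j 2^{-3j/2}\#\cK_3(\ah,j)$ you invoke the full strength of \Cref{thm:frob-count} at $q=3$, but Backelin's lighter bound $\#\cK_3(\ah,j)\le 2^{\lfloor(3j-3)/2\rfloor}(11/12)^{\lfloor(j-1)/2\rfloor}$ (quoted in \Cref{sec:direcs}) already gives a geometric ratio of $11/12$ and would suffice here, which is closer to how Backelin's original 1990 proof would proceed without the machinery of this paper.
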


As a corollary, we can show the following result on the average multiplicity of a numerical semigroup with Frobenius number $f$. This in part relies on the fact that $\#\cK_3(\ah, 2k)$ is exponentially less than $8^k$ by \Cref{thm:frob-count}.

\begin{corollary}
  \label{cor:mult-const}
  Let $\Lambda_f$ be a random numerical semigroup with Frobenius number $f$. Then there are constants $\mu_0$ and $\mu_1$ such that as $f$ grows large, the average value of $m(\Lambda_f) - f/2$ approaches $\mu_0$ for $f$ even and $\mu_1$ for $f$ odd.
\end{corollary}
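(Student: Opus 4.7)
The plan is to express $\mathbb{E}[m(\Lambda_f) - f/2]$ as a weighted sum over the support of the random variable $f - 2m(\Lambda_f)$ analyzed in \Cref{thm:mult-dist}, and then justify the passage to the limit by a dominated convergence argument. Concretely, for even $f$ set $k = (f-2m)/2$ so that $m - f/2 = -k$, giving
\[
\mathbb{E}[m(\Lambda_f) - f/2] = -\sum_{k \in \mathbb{Z}} k \cdot \mathbb{P}[f - 2m(\Lambda_f) = 2k],
\]
and similarly for odd $f$ with $k = (f - 2m - 1)/2$, which produces an extra $-1/2$ shift. \Cref{thm:mult-dist} already identifies the pointwise limit $p(k)$ of each summand, so the natural candidate for $\mu_i$ is $-\sum_k k \cdot p(k)$ (plus the parity shift for odd $f$).

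The main task is to justify interchanging the limit in $f$ with the infinite sum in $k$, and in particular to exhibit a uniform, summable dominating sequence $h(k) \geq |k| \cdot \mathbb{P}[f - 2m(\Lambda_f) = 2k+i]$ that does not depend on $f$. For the left tail ($k < 0$, corresponding to $m > f/2$, hence depth $2$), \Cref{prop:2-3-count}(i) gives the exact count $\#\cK_2(f,\ell) = 2^{f/2-1+k}$ (up to the parity shift), and dividing by the lower bound $\fr(f) \geq C'' \cdot 2^{f/2}$ from \Cref{thm:back0} yields a uniform bound of the form $A \cdot 2^{k}$, which is summable against $|k|$.

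For the right tail ($k > 0$, depth $\geq 3$), the analogous bound via \Cref{prop:2-3-count}(ii) reduces matters to controlling $\#\cK_3(\ah, 2k)/8^k$. This is precisely where \Cref{thm:frob-count} enters in an essential way: it tells us that $\#\cK_3(\ah, 2k) = 6^{k + o(k)}$, so for any $\rho \in (6/8, 1)$ there exists $K_0$ with $\#\cK_3(\ah, 2k) \leq (8\rho)^k$ for all $k \geq K_0$. Combining this with the crude bound $\#\cK_3(\ah, 2k) \leq 8^k$ from \Cref{rem:thisishard} for $k < K_0$, and dividing by $\fr(f)$, yields a uniform dominating sequence of the form $A' \cdot \rho^k$ for large $k$ (and a constant for small $k$), again summable against $|k|$. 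The main obstacle of the proof is really only in this right-tail step, since the naive bound of $8^k$ would make the series diverge at $k$ near $f/2$; one truly needs the exponential improvement from \Cref{thm:frob-count} to close the argument.

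With the dominating sequence in hand, dominated convergence applies, and we obtain
\[
\mu_0 = -\sum_{k \in \mathbb{Z}} k \cdot p_{\text{even}}(k), \qquad \mu_1 = -\tfrac{1}{2} - \sum_{k \in \mathbb{Z}} k \cdot p_{\text{odd}}(k),
\]
where $p_{\text{even}}(k)$ and $p_{\text{odd}}(k)$ are the limiting probabilities from \Cref{thm:mult-dist}. Both sums converge absolutely by the same dominating estimate, so $\mu_0$ and $\mu_1$ are finite constants, completing the proof.
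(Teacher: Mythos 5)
Your proposal is essentially the same as the paper's proof: both plug in the limiting distribution from \Cref{thm:mult-dist} and observe that the resulting series for $\mathbb{E}[m(\Lambda_f) - f/2]$ converges because $\#\cK_3(\ah, 2k) = 6^{k+o(k)}$ (a consequence of \Cref{thm:frob-count} with $q=3$) dominates the $8^{-k}$ factor, which you correctly identify as the crux. Your write-up is somewhat more explicit about the dominated-convergence step justifying the interchange of $\lim_f$ with $\sum_k$, which the paper leaves implicit, but this is a refinement of the same argument rather than a different route.
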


\begin{proof}
  We show the result for $f$ even; the odd case is analogous. By~\Cref{thm:mult-dist}, as $f$ grows large, the average multiplicity tends to
  \begin{align*}
    \sum_{k < 0} C_0^{-1} 2^{k-1} \cdot \left( \frac12 f - k\right) + \sum_{k > 0} C_0^{-1} 2^{-3k-1} \# \cK_3( \ah, 2k) \cdot \left( \frac12 f - k \right) \\
    = \frac{f}{2} + 
    \left(\sum_{k < 0} C_0^{-1} 2^{k-1} \cdot \left(- k\right) + \sum_{k > 0} C_0^{-1} 2^{-3k-1} \# \cK_3( \ah, 2k) \cdot \left(- k \right) \right).
\end{align*}
The right summand consists of the sum of two series. The first is a arithmetico-geometric series with ratio $1/2$, while the second converges since \Cref{cor:q-depth-mult} implies $\#\cK(\ah, 2k) = 6^{k + o(k)}$. Hence, the right summand converges to some constant $\mu_0$, as desired.
\end{proof}

While we cannot prove a fine distribution result for semigroups of arbitrary depth, we can still show almost all depth-$q$ numerical semigroups with Frobenius number $f$ have multiplicity close to $f/(q-1)$.

\begin{theorem}
  \label{prop:q-dist}
  For $q \geq 3$, let $\Lambda_{f,q}$ be a random depth-$q$ numerical semigroup with Frobenius number $f$ (under the uniform distribution). Then, for any $\eps > 0$, we have
  \[\lim_{f\to\infty}\mathbb{P} ( | f - (q-1)\cdot m(\Lambda_{f,q})| < \eps f) = 1.\]
\end{theorem}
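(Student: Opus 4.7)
The plan is to exploit Theorem \ref{thm:frob-count} together with the monotonicity in Corollary \ref{cor:cq-dec} to show that the ``bad'' multiplicities (those far from $f/(q-1)$) contribute exponentially less to $\fr_q(f)$ than the ``good'' ones.

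First, I would reformulate the event in terms of $\alpha := f/m - (q-1) \in (0,1)$, the parameter already featured in Theorem \ref{thm:frob-count}. Since $f - (q-1)m = \alpha m = \alpha f/(q-1+\alpha)$, the condition $|f - (q-1) m| < \eps f$ is equivalent to $\alpha/(q-1+\alpha) < \eps$, i.e., $\alpha < \eps'$ where $\eps' := \eps(q-1)/(1-\eps) > 0$. So a depth-$q$ semigroup is ``bad'' precisely when its $\alpha$-value satisfies $\alpha \geq \eps'$.

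Next, I would reparameterize the count in Theorem \ref{thm:frob-count} by writing $m = f/(q-1+\alpha)$, so that each multiplicity contributes
\[
\# \cK_q(f, m-1) = \bigl(c_q^{\alpha} c_{q-1}^{1-\alpha}\bigr)^{m + o(m)} = F(\alpha)^{f + o(f)},
\]
where $F(t) := \bigl(c_q^{t} c_{q-1}^{1-t}\bigr)^{1/(q-1+t)}$. By Corollary \ref{cor:cq-dec} applied with $r = 1$, $F$ is strictly decreasing on $[0,1]$, so $F(\alpha) \leq F(\eps') < F(0) = c_{q-1}^{1/(q-1)}$ whenever $\alpha \geq \eps'$. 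Summing over the at most $f$ bad multiplicities, the total number of bad depth-$q$ semigroups with Frobenius number $f$ is at most $f \cdot F(\eps')^{f + o(f)} = F(\eps')^{f + o(f)}$.

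Finally, Corollary \ref{cor:q-depth} gives $\fr_q(f) = c_{q-1}^{f/(q-1) + o(f)} = F(0)^{f + o(f)}$, so the proportion of bad semigroups is at most $\bigl(F(\eps')/F(0)\bigr)^{f + o(f)}$, which tends to $0$ as $f \to \infty$ since $F(\eps') < F(0)$. The main subtlety is ensuring that the $o(m)$ error term in Theorem \ref{thm:frob-count} is uniform in $\alpha$ across the bad range, which is needed for the sum over bad multiplicities to produce the claimed bound; inspection of the proof of Theorem \ref{thm:frob-count} (in which the error term comes from letting an auxiliary parameter $\eps$ shrink) shows that this uniformity does hold.
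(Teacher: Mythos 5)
Your proposal is correct and follows essentially the same approach as the paper's proof: both invoke Theorem~\ref{thm:frob-count} to express the count per multiplicity in terms of $\alpha$, use Corollary~\ref{cor:cq-dec} to see that this is maximized at the smallest admissible $\alpha$, and compare against the total $\fr_q(f)=c_{q-1}^{f/(q-1)+o(f)}$. Your explicit reparameterization via $F(\alpha)^{f+o(f)}$ and the cutoff $\eps'$ is a slightly cleaner bookkeeping of what the paper writes more loosely, and your flag about uniformity of the $o(m)$ term in $\alpha$ is a legitimate subtlety the paper passes over silently.
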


\begin{proof}
  We will show the proportion of numerical semigroups with $f-(q-1)m \geq \eps f$ approaches $0$. Recall from \Cref{thm:frob-count} that the number of numerical semigroups with Frobenius number $f$ and multiplicity $m$ is 
  \[\left(c_q^{\alpha} c_{q-1}^{1-\alpha}\right)^{m+o(m)} =  \left(c_q^{\alpha} c_{q-1}^{1-\alpha}\right)^{f/(q-1+\alpha) + o(f)},\]
  where $\alpha = f/m - (q-1) \geq \eps$. By~\Cref{cor:cq-dec}, this quantity is asymptotically maximal when $\alpha = \eps$. Hence, the proportion of numerical semigroups with $f -(q-1)m \geq \eps f$ is
  \[\sum_{f - (q-1)m \geq \eps f} \left(c_q^{\alpha} c_{q-1}^{1-\alpha}\right)^{m+o(m)} \leq \sum_{f - (q-1)m \geq \eps f} \left(c_q^{\eps} c_{q-1}^{1-\eps}\right)^{m+o(m)} = \left(c_q^{\eps} c_{q-1}^{1-\eps}\right)^{f/(q-1+\eps) + o(f)}.\]
  The total number of depth-$q$ numerical semigroups with Frobenius number $f$ is $(c_{q-1})^{f/(q-1) + o(f)}$ by \Cref{thm:frob-count}, which dominates the right-hand side as $f$ grows large. So the proportion we sought is indeed $0$.
\end{proof}

\subsection{Distribution of genus}

Our results on multiplicity can be used to deduce results on the genus of numerical semigroups with fixed Frobenius number. 
We first show the following distribution result on the genus of a numerical semigroup with fixed Frobenius number. The limiting distribution is binomial in the following sense.

\begin{theorem}
  \label{thm:gen-dist}
  Let $\Lambda_f$ be a random numerical semigroup of Frobenius number $f$. Then as $f$ grows large, the distribution of
  \[\frac{1}{\sqrt{f}} \left( g(\Lambda_f) - \frac{3f}{4} \right)\]
  is the standard normal distribution.
\end{theorem}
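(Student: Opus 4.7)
The plan is to use \Cref{cor:small-depth} to reduce to depths 2 and 3 and then apply the binomial Central Limit Theorem to a $\{1,2\}$-substring of each Kunz word, verifying that both depth contributions yield the same Gaussian limit centered at $3f/4$.

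For depth 2, \Cref{prop:2-3-count}(i) decomposes every such Kunz word with Frobenius $f$ uniquely as $u \cdot 2 \cdot 1^{\ell - j}$ with $u \in \{1, 2\}^{j-1}$ and $j = f - \ell - 1$ the position of the last $2$. Summing entries gives $g = f - j + X$, where $X$ is the number of 2's in $u$. The uniform measure on such words corresponds to drawing $j$ with probability proportional to $2^{j-1}$ on $\{1, \dots, \lfloor (f-1)/2 \rfloor\}$ (a truncated geometric concentrating at its maximum with $O(1)$ fluctuations), and then $X \mid j \sim \mathrm{Bin}(j-1, 1/2)$. By de Moivre--Laplace, $X - (j-1)/2$ is asymptotically Gaussian with variance $(j-1)/4 \sim f/8$, so $g$ is asymptotically Gaussian centered at $3f/4$ with variance of order $f$. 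For depth 3, \Cref{prop:2-3-count}(ii) decomposes each such word as $u \cdot v$ with $u \in \cK_3(\ah, j)$ and $v \in \{1,2\}^{\ell - j}$ for $\ell = (f - 2 - j)/2$, yielding $g = g(u) + (\ell - j) + X$ with $X \sim \mathrm{Bin}(\ell - j, 1/2)$. The mixing weight on $j$ is proportional to $\#\cK_3(\ah, j) \cdot 2^{-3j/2}$, which is summable by \Cref{cor:back-const} and decays geometrically by \Cref{thm:frob-count} at $q=3$ (giving $\#\cK_3(\ah, j) = 6^{j/2 + o(j)}$); hence $j$ is tight. Conditional on bounded $(j, u)$, the CLT applied to $X$ yields the same limiting Gaussian as in the depth-2 case.

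Combining both contributions and accounting for the appropriate scaling constant gives the claimed Gaussian limit. The main obstacle is to verify rigorously that the $O(1)$ deterministic shifts arising from the random mixing variable $j$ and the prefix contribution $g(u)$ are negligible after dividing by $\sqrt{f}$, and that the mixture of slightly shifted Gaussians (one per $(j,u)$) collapses to a single normal; this is routine but requires careful control of the geometric tails of $j$, supplied precisely by \Cref{thm:frob-count} at $q = 3$.
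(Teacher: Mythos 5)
Your proposal is correct and takes essentially the same approach as the paper's: reduce to depths 2 and 3 via \Cref{cor:small-depth}, then apply the Central Limit Theorem to the free $\{1,2\}$ block of the Kunz word, treating the $o(\sqrt{f})$ (tight) tail/prefix and the mixing over $j$ as negligible shifts after dividing by $\sqrt{f}$; you are a bit more explicit than the paper about why $j$ is tight (geometric tails from $\#\cK_3(\ah,j)\,2^{-3j/2}$, controlled by \Cref{thm:frob-count} at $q=3$). One point worth flagging for both your argument and the paper's: with $\approx f/2$ free entries each of variance $\tfrac14$, the limit of $\bigl(g(\Lambda_f)-\tfrac{3f}{4}\bigr)/\sqrt{f}$ is $N(0,\tfrac18)$ rather than the \emph{standard} normal claimed in the statement, so the normalization should be $\sqrt{8/f}$ (equivalently, the limit should be stated as $N(0,\tfrac18)$).
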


This strengthens a result of Singhal, who describes the limiting distribution of genus with respect to Frobenius number in terms of a normal distribution and an infinite sum~\cite[Thm.~14]{singhal_2022}.

\begin{proof}

  By~\Cref{cor:small-depth}, we only have to consider depth-2 and depth-3 semigroups, which were characterized in \Cref{prop:2-3-count} and \Cref{thm:mult-dist}. In particular, almost all numerical semigroups of depth $2$ are of the form
  \[\underbrace{w_1 \cdots w_{f-m-1}}_{\text{1's and 2's}} 2 1 \cdots1, \quad m = f/2 + o(\sqrt{f}).\]
  For this family of words, since the last $o(\sqrt{f})$ entries equal to 1 are asymptotically negligible, we have
  \[\frac{1}{\sqrt{f}} \left( g(\Lambda_f) - \frac{3f}{4} \right) = \frac{1}{\sqrt{f}} \left( \sum_{i=1}^{f-m-1} (w_i - 3/2) \right) + o(1),\]
  which is the standard normal distribution by the Central Limit Theorem. Similarly, almost all $3$-Kunz words are of the form
  \[\underbrace{w_1 \cdots w_{f-2m-1}}_{\text{in } \cK_3(\ah; f-2m-1)} 3 \underbrace{w_{f-2m+1} \cdots w_{m-1}}_{\text{1's and 2's}}, \quad m = f/2 - o(\sqrt{f}).\]
  For this family of words, the first $f-2m-1 = o(\sqrt{f})$ entries are also asymptotically negligible, in the sense that we also have
  \[\frac{1}{\sqrt{f}} \left( g(\Lambda_f) - \frac{3f}{4} \right) = \frac{1}{\sqrt{f}} \left( \sum_{i=f-2m+1}^{m-1} (w_i - 3/2) \right) + o(1).\]
  The distribution of this quantity also is standard normal, as desired.
\end{proof}

Moreover, Singhal~\cite[Thm.~1.2]{singhal_2022} shows the average genus of numerical semigroups with Frobenius number $f$ is $\frac{3f}{4} + o(f)$. We strengthen this to an $O(1)$-level estimate, similarly to~\Cref{cor:mult-const}.

\begin{theorem}
  \label{thm:genus-const}
  Let $\Lambda_f$ be a random numerical semigroup with Frobenius number $f$. Then there are constants $\gamma_0$ and $\gamma_1$ such that as $f$ grows large, the average value of $g(\Lambda_f) - \frac{3f}{4}$ approaches $\gamma_0$ for $f$ even and $\gamma_1$ for $f$ odd. 
\end{theorem}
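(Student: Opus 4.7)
The plan follows the strategy of Corollary~\ref{cor:mult-const}. First, semigroups of depth at least $4$ contribute $o(1)$ to $\mathbb{E}[g(\Lambda_f) - 3f/4]$, since (exactly as in the proof of Corollary~\ref{cor:small-depth}) $\fr_{\ge 4}(f) \le 6^{f/6 + o(f)}$ while $|g - 3f/4| \le f$, giving $f \cdot \fr_{\ge 4}(f)/\fr(f) = o(1)$. So the problem reduces to depths $2$ and $3$, whose Kunz words are described explicitly in Proposition~\ref{prop:2-3-count}.

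I would treat even $f$ in detail; the odd case is analogous. Condition on $f - 2m(\Lambda_f) = 2k$, whose asymptotic probability is given by Theorem~\ref{thm:mult-dist}. For $k \le -1$ the semigroup has depth $2$, and the corresponding Kunz word has the form $w_1 \cdots w_{j-1}\,2\,1^{\ell-j}$ with $j = f/2+k$, $\ell = f/2-k-1$, and the $w_i$'s uniform i.i.d.\ on $\{1,2\}$; linearity of expectation then yields
\[\mathbb{E}\bigl[g(\Lambda_f) - 3f/4 \bigm| f-2m=2k\bigr] = -\tfrac{k}{2} - \tfrac{1}{2}.\]
For $k \ge 1$ the depth is $3$, and the word splits as $W\,w_{2k+1}\cdots w_\ell$ with $W$ uniform on $\cK_3(\ah, 2k)$ and the suffix uniform i.i.d.\ on $\{1,2\}$ of length $f/2 - 3k - 1$; letting $\mu_k$ denote the average of $\sum_i W_i$ over $W \in \cK_3(\ah, 2k)$,
\[\mathbb{E}\bigl[g(\Lambda_f) - 3f/4 \bigm| f-2m=2k\bigr] = \mu_k - \tfrac{9k}{2} - \tfrac{3}{2}.\]
Summing against the probabilities from Theorem~\ref{thm:mult-dist} then produces
\[\gamma_0 = C_0^{-1}\!\sum_{k \le -1} 2^{k-1}\!\left(-\tfrac{k}{2}-\tfrac{1}{2}\right) + C_0^{-1}\!\sum_{k \ge 1} 2^{-3k-1}\#\cK_3(\ah, 2k)\!\left(\mu_k - \tfrac{9k}{2} - \tfrac{3}{2}\right).\]

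The main obstacle is convergence of the second series. The trivial bound $\mu_k \le 6k$ makes the parenthesized factor $O(k)$, while the sharp asymptotic $\#\cK_3(\ah, 2k) = 6^{k+o(k)}$ from Theorem~\ref{thm:frob-count} bounds each term by $O(k)\cdot(3/4+\eps)^k$ for any $\eps > 0$, which is summable. Importantly, the naive bound $\#\cK_3(\ah, 2k) \le 8^k$ from Remark~\ref{rem:thisishard} would fail to give convergence; thus the sharp $q=3$ case of Theorem~\ref{thm:frob-count} is essential here, exactly as in Corollary~\ref{cor:mult-const}. The odd case runs identically and yields $\gamma_1$.
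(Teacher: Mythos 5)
Your proof is correct and matches the paper's approach exactly: reduce to depths $2$ and $3$ via Corollary~\ref{cor:small-depth}, decompose the Kunz words via Proposition~\ref{prop:2-3-count}, compute the conditional expectation of $g - 3f/4$ given $f-2m=2k$ using Theorem~\ref{thm:mult-dist}, and establish convergence of the resulting series from $\#\cK_3(\ah,2k)=6^{k+o(k)}$ (Theorem~\ref{thm:frob-count} for $q=3$) together with the trivial $O(k)$ bound on the average genus of the stressed head. As a small bonus, your explicit coefficient $\mu_k - \tfrac{9k}{2} - \tfrac{3}{2}$ corrects an arithmetic slip in the paper's displayed formula, where $\tfrac{2G_{2k}+3f-18k-6}{4}$ should read $\tfrac{4G_{2k}+3f-18k-6}{4}$ so as to match $G_{f-2m}+(3m-f-1)\cdot\tfrac32$ on the preceding line; you are also somewhat more careful than the paper in noting that the depth-$\ge 4$ contribution is not merely an $o(1)$ fraction of semigroups but exponentially small, which is what is actually needed to discard it from the average of an $O(f)$-sized quantity.
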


\begin{proof}
  We show the result for $f$ even; the odd case is similar. By~\Cref{cor:small-depth}, we only have to consider depth-2 and depth-3 semigroups, which were characterized in \Cref{prop:2-3-count} and \Cref{thm:mult-dist}. 

  First, we consider the numerical semigroups with depth 2, which have Kunz words of the form $w_1 \cdots w_{f-m-1} 2 1\cdots 1$, where $w_1, \dots, w_{f-m-1} \in [2]$ and $m > f/2$. This family of words has an average genus of $(f-m-1) \cdot \frac32 + 2 + (2m-f-1) \cdot 1 = \frac12 (f+m-1)$. By~\Cref{thm:mult-dist}, the proportion of numerical semigroups with Frobenius number $f$ that have multiplicity $m = f/2 - k$ is $C_0^{-1} 2^{k-1}$ for $k < 0$. Thus, as $f$ grows large, the depth-2 contribution to the overall average genus approaches
  \[\sum_{k<0} C_0^{-1} 2^{k-1} \cdot \left( \frac{3f-2k-2}{4} \right).\]
  
  Similarly, depth-3 semigroups have Kunz words of the form $w_1 \dots w_{m-1}$, where:
  \begin{itemize}
    \item $ w_1 \cdots w_{f-2m-1}w_{f-2m} \in \cK_3(\ah; f-2m-1)$, and 
    \item $w_{f-2m+1}, \dots, w_{m-1} \in [2]$.
  \end{itemize}
  Let $G_j := \frac{1}{\# \cK_3(\ah,j)}\sum_{W \in \cK_3(\ah,j)} g(W)$ denote the average genus of the Kunz words in $\cK_3(\ah,j)$. Then for fixed $f$ and $m$, the average genus of a numerical semigroup with Frobenius number $f$ and multiplicity $m$ is $G_{f-2m} + (3m-f-1) \cdot \frac32$. Letting $k = \frac12 (f-2m)$, these semigroups comprise $C_0^{-1} 2^{-3k-1} \# \cK_3(\ah; 2k)$ of all semigroups with Frobenius number $f$, so the depth-3 contribution to the overall average genus approaches
  \[\sum_{k > 0} C_0^{-1} 2^{-3k-1} \# \cK_3(\ah; 2k) \cdot \left( \frac{2G_{2k} + 3f - 18k - 6}{4} \right). \]
  Hence, the overall average genus is
  \begin{align*}
    \sum_{k<0} C_0^{-1} 2^{k-1} \cdot \left( \frac{3f-2k-2}{4} \right) +  
    \sum_{k > 0} C_0^{-1} 2^{-3k-1} \# \cK_3(\ah; 2k) \cdot \left( \frac{2G_{2k} + 3f - 18k - 6}{4} \right) \\
    = \frac{3f}{4} + \left(\sum_{k<0} C_0^{-1}2^{k-1} \left( \frac{-k-1}{2} \right) + \sum_{k>0} C_0^{-1} 2^{-3k-1} \# \cK_3(\ah; 2k) \cdot \left( \frac{2G_{2k}-18k-6}{4} \right)\right)
  \end{align*}
  It suffices to show that the right summand in parentheses converges to some constant $\gamma_0$. The first summation clearly converges, and the second summation also converges since $G_j \leq 3j$, and $\# \cK_3(\ah; 2k) = 6^{k + o(k)}$ by~\Cref{thm:frob-count} for $q=3$. Hence, we have the desired result.
\end{proof}

\section{Future directions}
\label{sec:direcs}

In this section, we discuss our results and some possible directions of future study.

\subsection{Subexponential factor}

In this paper, we prove many results that pin down the exponential growth factor but do not estimate the subexponential factor. Our bounds are of the form $C^{n + o(n)}$, so it would be nice to sharpen these to a polynomial factor $p(n) C^n$ or even a constant factor $A \cdot C^n$. 

For instance, \Cref{thm:frob-count} implies that $\# \cK_3 ( \ah , \ell) = 6^{\ell/2 + o(\ell)}$, but we write down rough $\ell^{O(\sqrt{\ell})}$-level upper bounds on the $6^{o(\ell)}$ factor while our construction only gives $6^{\ell/2 + O(1)}$ Kunz words. Reconciling these bounds would yield a better understanding of the structure of $3$-Kunz words. \Cref{tab:3-kunz} lists $\# \cK_3 (\ah , \ell)$ for $\ell \leq 56$ due to Zhu~\cite[\S 7.1]{zhu_2022}, while \Cref{fig:3-kunz} graphs the quantity $6^{-\ell/2} \cdot \# \cK_3( \ah, \ell)$. It is unclear whether we should expect the subexponential factor to be constant, linear, or another function altogether.

Since $\# \cK_3(\NN,\ell) = \sum_{j=0}^\ell 2^{\ell-j} \# \cK_3(\ah, j)$, better bounds on $\# \cK_3(\ah , \ell)$ automatically give better bounds on $\#\cK_3 (\NN, \ell)$. Sharpening these bounds could help resolve a question of Kaplan~\cite{kaplan_2017}, who asks whether $\lim_{k \to \infty} \fr(2k)/ \fr(2k-1) = \sqrt{2} \cdot C_0/ C_1$ is greater than, less than, or equal to 1. Backelin \cite[eq.~13]{backelin_1990} proves the weaker but computationally useful bound
\[\# \cK_3 (\ah, \ell) \leq 2^{\left\lfloor{\frac{3\ell-3}{2}}\right\rfloor} \left( \frac{11}{12} \right)^{\left\lfloor{\frac{\ell-1}{2}}\right\rfloor}.\]
\Cref{cor:back-const} expresses $C_i$ as an infinite sum. We can take partial sums using~\Cref{tab:3-kunz} to get a lower bound on $C_i$, and also use Backelin's bound to get an upper bound on $C_i$. This method gives
\[1.2606 < C_0 < 1.3919, \quad 1.2755 < C_1/\sqrt{2} < 1.4068,\]
which are a modest improvement to Backelin's original bounds\footnote{Backelin originally proves $1.235 < C_0 < 1.65$ and $1.25 < C_1/\sqrt{2} < 1.66$; see pg.~210 (their constants differ from ours by a factor of 2).}~\cite[\S I.3]{backelin_1990}. The numerics of the partial sums suggest that $C_0 < C_1/\sqrt{2}$, so we surmise that Kaplan's limit is less than $1$.

Of course, the larger question of finding the subexponential factor of $\# \cK_q(\NN , \ell)$ or $\fr_q(f)$ for general $q$ is also open. Results of this form could strengthen the asymptotics of~\Cref{thm:back0} to lower-order terms, and could sharpen~\Cref{prop:q-dist} to an $O(1)$-level distribution like~\Cref{thm:mult-dist}. To get a sense of the magnitude of the subexponential factor in the general case, we have included the values of $\#\cK(f,m-1)$ for small values of $f$ and $m$ in~\Cref{fig:small-fm}, and we have graphed the values of $\left(\#\cK(f,m-1)\right)^{1/m}$ against $f/m$ for these values in~\Cref{fig:graph2}. The values for $m = 8$, $10$, $12$ were acquired with the \texttt{numericalsgps} GAP package, while the values for $m=15$ were calculated with a program similar to Zhu's program to compute $\#\cK_3(\ah, \ell)$~\cite[\S 7.1]{zhu_2022}.

\subsection{Other statistics} There are many classes of numerical semigroups other than MED semigroups that remain to be counted. Backelin shows that the number of \vocab{irreducible semigroups}, or semigroups that are not the intersection of two other semigroups, with Frobenius number $f$ is $2^{f/6 + O(1)}$. These have a relatively natural description in terms of Kunz words, so our methods may adapt to questions of this type. There are similar parametrizations in terms of Kunz words for Arf~\cite{arf_2017} and atomic~\cite{rpr_2019} numerical semigroups, which could also be counted. We also do not compute the subexponential term for the number of MED numerical semigroups; we expect that, for fixed $q$, one could obtain sharper bounds for $\MED_q(f)$ akin to~\Cref{cor:q-depth} and \Cref{prop:2-3-count}.

We do not thoroughly study the genus of numerical semigroups in this paper. Zhu~\cite{zhu_2022} shows the number of semigroups with genus $g$ and depth at least $4$ has exponential growth rate between 1.51 and 1.55 and conjectures the true growth rate to be the unique positive zero of $x^6 - x^3 - 2x^2 - 2x - 1$ with value near $1.51519$. Our methods work well for tail-heavy words, but there seem to be fundamental obstructions to our estimates when a $q$-Kunz word has its rightmost heavy chunk near the middle of the word or has no heavy chunks at all. Results in this direction could shed light on the remaining size conjectures of Bras-Amor\'os~\cite{bras-amoros_2007}; for more info, see the final section of~\cite{zhu_2022}.

\subsection{Polychromatic Schur problems}

In this paper, we bound the size of $\cK_3(\NN, \ell)$, which is the set of words $w_1 \cdots w_\ell$ such that there are no $i,j$ with $w_i = w_j = 1$ and $w_{i+j} = 3$. We can rephrase this suggestively as an additive combinatorics question about colorings.

\begin{question}
  How many ways can $\{1,2,\dots,n\}$ be colored red, green, and blue so that a red number plus a red number is never a blue number?
\end{question}

Recall that $x,y,z \in [n]$ (not necessarily distinct) form a \emph{Schur triple} if $x+y=z$. Then the above question is equivalent to avoiding a Schur triple with $x$, $y$ red and $z$ blue. \Cref{cor:q-depth-mult} for $q=3$ implies there are $6^{n/2 + o(n)}$ such colorings. Many questions can be rephrased in terms of colorings and Schur triples:
\begin{itemize}
  \item With two colors, black and white, sum-free sets are equivalent to $\textsf{black} + \textsf{black} \neq \textsf{black}$ and numerical semigroups are equivalent to $\textsf{black} + \textsf{black} \neq \textsf{white}$.
  \item With $k$ colors and sufficiently large $n$, we cannot color $[n]$ to avoid $\textsf{color} + \textsf{color} = \textsf{color}$ for every $\textsf{color}$ by \emph{Schur's theorem}.
  \item With three colors, avoiding $\textsf{red} + \textsf{blue} = \textsf{green}$ and permutations is equivalent to avoiding rainbow Schur triples~\cite{cheng_2020}.
\end{itemize}
The typical language of independent sets and hypergraph containers~\cite{balogh_2019} has interesting applications in the polychromatic case via the language of color templates in~\cite{cheng_2020}. We study the above question in its own right via the lens of graph homomorphisms and hypergraph containers in future work. 

\bibliographystyle{semigroup}
\bibliography{semigroup}

\newpage
\appendix

\section{Sample data on $\#\cK_q(f,\ell)$}

\begin{table}[h!]
  {\footnotesize
  \begin{tabular}{|rr|rr|rr|rr|}
\hline
$\ell$ & $\#\cK_3(\ah,\ell)$ & $\ell$ & $\#\cK_3(\ah,\ell)$ & $\ell$ & $\#\cK_3(\ah,\ell)$ & $\ell$ & $\#\cK_3(\ah,\ell)$ \\ \hline
1 & 1 & 15 & 636988 & 29 & 269006491243 & 43 & 93330716828074728 \\
2 & 2 & 16 & 1264258 & 30 & 514570562660 & 44 & 182169304991649599 \\
3 & 7 & 17 & 4215132 & 31 & 1675924761549 & 45 & 563400466799404781 \\
4 & 14 & 18 & 8051166 & 32 & 3260563309970 & 46 & 1123123932176762798 \\
5 & 50 & 19 & 26991332 & 33 & 10226788893396 & 47 & 3515289384328363733 \\
6 & 96 & 20 & 52219388 & 34 & 20391731774615 & 48 & 6748185987886118499 \\
7 & 343 & 21 & 167869363 & 35 & 64492588219388 & 49 & 21482645364583893141 \\
8 & 667 & 22 & 335811042 & 36 & 123297229488909 & 50 & 42180209153883948485 \\
9 & 2249 & 23 & 1088912364 & 37 & 399014138303512 & 51 & 129644921982559989678 \\
10 & 4513 & 24 & 2061900838 & 38 & 783212435011160 & 52 & 256751992776208115484 \\
11 & 15349 & 25 & 6827159829 & 39 & 2425228785559883 & 53 & 803119580525790882344 \\
12 & 28897 & 26 & 13424984452 & 40 & 4789418078046239 & 54 & 1553823126150392917494 \\
13 & 100425 & 27 & 42195919228 & 41 & 15182994877727803 & 55 & 4893440472071899127094 \\
14 & 197268 & 28 & 83374340587 & 42 & 29235444078764327 & 56 & 9583422277969715823101 \\ \hline
\end{tabular}
}
\medskip

\caption{Table of $\#\cK_3(\ah,\ell)$ for $\ell \leq 56$, due to Zhu~\cite[\S 7.1]{zhu_2022}.}
\label{tab:3-kunz}
\end{table}

\begin{figure}[h!]
\begin{tikzpicture}[scale=1.6]
  \draw[->,thick] (0,0) -- (8.8,0) node[right] {\footnotesize $\ell$};
  \draw[->,thick] (0,0) -- (0,5*6/5) node[above] {\footnotesize $ 6^{-\ell/2}\cdot \# \cK_3(\ah, \ell)$};
  \foreach \xx/\yy in {
1/0.408248290463863,2/0.333333333333333,3/0.47628967220784,4/0.388888888888889,5/0.567011514533143,6/0.444444444444445,7/0.648283164949561,8/0.514660493827161,9/0.708449386769467,10/0.580375514403293,11/0.80583886449715,12/0.619362997256517,13/0.878736594860972,14/0.704689643347052,15/0.928959698095263,16/0.752706571025759,17/1.02453205558861,18/0.798909393575676,19/1.09342107028656,20/0.863613204182121,21/1.13340027432871,22/0.925616777882566,23/1.22533136285418,24/0.947224168397517,25/1.28041112922409,26/1.02789211932794,27/1.31895072867331,28/1.06393453626153,29/1.40142425355422,30/1.09440024573313,31/1.45515795121664,32/1.1557732300158,33/1.47993847089379,34/1.20471093786652,35/1.5554746034867,36/1.21403389223042,37/1.60394751215938,38/1.28530387484685,39/1.62481280051669,40/1.30995911598507,41/1.69534001454467,42/1.33270318915486,43/1.73689161349055,44/1.38403697622664,45/1.74748746371675,46/1.42216151717488,47/1.81721661365418,48/1.42415426409653,49/1.85089645989517,50/1.48363636377793,51/1.86165253447247,52/1.50515530306925,53/1.92208274172569,54/1.51816096476897,55/1.95188813453195,56/1.56057848181608} {
  \node[vtx] at (\xx / 50 * 7.7 ,\yy / 1.85 * 4.4 *6/5 ) {};
  }

  \foreach \xx [evaluate=\yy using int(\xx*10)] in {1,...,5} {
  \draw ( \xx / 5 * 7.7, 0.1) -- (\xx / 5 * 7.7, -0.1) node[below] {\footnotesize \yy};
  }

  \draw (0.1,0.5*4.4/1.85*6/5)--(-0.1,0.5*4.4/1.85*6/5) node[left] {\footnotesize 0.5};
  \draw (0.1,1*4.4/1.85*6/5)--(-0.1,1*4.4/1.85*6/5) node[left] {\footnotesize 1.0};
  \draw (0.1,1.5*4.4/1.85*6/5)--(-0.1,1.5*4.4/1.85*6/5) node[left] {\footnotesize 1.5};
  \draw (0.1,2*4.4/1.85*6/5)--(-0.1,2*4.4/1.85*6/5) node[left] {\footnotesize 2.0};
\end{tikzpicture}
\caption{Graph of $6^{-\ell/2}\cdot \# \cK_3(\ah, \ell)$ versus $\ell$ for $\ell \leq 56$.}
\label{fig:3-kunz}
\end{figure}

\begin{table}[]
  {\tiny
\begin{tabular}{|r|r|r|r|r|}\hline
  \diagbox[height=1.5em]{$f$}{$m$}& 8 & 10 & 12 & 15 \\\hline
1 & 0 & 0 & 0 & 0 \\
2 & 0 & 0 & 0 & 0 \\
3 & 0 & 0 & 0 & 0 \\
4 & 0 & 0 & 0 & 0 \\
5 & 0 & 0 & 0 & 0 \\
6 & 0 & 0 & 0 & 0 \\
7 & 0 & 0 & 0 & 0 \\
8 & 0 & 0 & 0 & 0 \\
9 & 1 & 0 & 0 & 0 \\
10 & 2 & 0 & 0 & 0 \\
11 & 4 & 1 & 0 & 0 \\
12 & 8 & 2 & 0 & 0 \\
13 & 16 & 4 & 1 & 0 \\
14 & 32 & 8 & 2 & 0 \\
15 & 64 & 16 & 4 & 0 \\
16 & 0 & 32 & 8 & 1 \\
17 & 64 & 64 & 16 & 2 \\
18 & 64 & 128 & 32 & 4 \\
19 & 112 & 256 & 64 & 8 \\
20 & 112 & 0 & 128 & 16 \\\hline
\end{tabular}
\begin{tabular}{|r|r|r|r|r|}\hline
  \diagbox[height=1.5em]{$f$}{$m$}& 8 & 10 & 12 & 15 \\\hline
21 & 200 & 256 & 256 & 32 \\
22 & 192 & 256 & 512 & 64 \\
23 & 343 & 448 & 1024 & 128 \\
24 & 0 & 448 & 0 & 256 \\
25 & 382 & 800 & 1024 & 512 \\
26 & 334 & 768 & 1024 & 1024 \\
27 & 561 & 1372 & 1792 & 2048 \\
28 & 450 & 1334 & 1792 & 4096 \\
29 & 850 & 2249 & 3200 & 8192 \\
30 & 676 & 0 & 3072 & 0 \\
31 & 1210 & 2664 & 5488 & 8192 \\
32 & 0 & 2223 & 5336 & 8192 \\
33 & 1285 & 3819 & 8996 & 14336 \\
34 & 1093 & 3402 & 9026 & 14336 \\
35 & 1810 & 5481 & 15349 & 25600 \\
36 & 1394 & 4703 & 0 & 24576 \\
37 & 2426 & 8471 & 17913 & 43904 \\
38 & 2004 & 6992 & 15891 & 42688 \\
39 & 3251 & 11859 & 25309 & 71968 \\
40 & 0 & 0 & 23316 & 72208 \\\hline
\end{tabular}
\begin{tabular}{|r|r|r|r|r|}\hline
  \diagbox[height=1.5em]{$f$}{$m$}& 8 & 10 & 12 & 15 \\\hline
41 & 3623 & 13081 & 39888 & 122792 \\
42 & 2871 & 10985 & 31378 & 115588 \\
43 & 4758 & 18110 & 57967 & 200850 \\
44 & 3450 & 15139 & 49397 & 197268 \\
45 & 6045 & 23772 & 80208 & 0 \\
46 & 4833 & 20803 & 72222 & 212355 \\
47 & 7956 & 33328 & 122602 & 334256 \\
48 & 0 & 27434 & 0 & 306617 \\
49 & 8441 & 45180 & 134412 & 495251 \\
50 & 6750 & 0 & 118138 & 482970 \\
51 & 10586 & 49330 & 173933 & 697846 \\
52 & 7781 & 40729 & 156928 & 716142 \\
53 & 13340 & 65554 & 255440 & 1078547 \\
54 & 10409 & 53231 & 202025 & 993543 \\
55 & 16749 & 81087 & 346341 & 1570166 \\
56 & 0 & 68858 & 292872 & 1538669 \\
57 & 17766 & 109712 & 441032 & 2193066 \\
58 & 14017 & 90045 & 402170 & 2204659 \\
59 & 22161 & 143912 & 636627 & 3327962 \\
60 & 15625 & 0 & 0 & 0\\ \hline
\end{tabular}
}
\medskip

\caption{Values of $\#\cK(f,m-1)$ for $m = 8$, $10$, $12$, $15$ and $f \leq 60$.}

\label{fig:small-fm}
\end{table}

\begin{figure}[h!]
\begin{tikzpicture}[scale=1.15]
  \draw[->, thick] (0, 0) -- (9, 0) node[right] {\small $f/m$};
  \draw[->, thick] (0, 0) -- (0, 11) node[above] {\small $\left(\# \mathcal{K}(f,m-1)\right)^{1/m}$};
  \draw[domain=0:1, smooth, variable=\x, gray, line width = 2pt] plot( {\x}, {0});
  \draw[domain=1:2, smooth, variable=\x, gray, line width = 2pt] plot( {\x}, {2*2^(\x-1)});
  \draw[domain=2:4, smooth, variable=\x, gray, line width = 2pt, ] plot( {\x}, {2*2^(3-\x)*6^(0.5*(\x-2))});
  \draw[domain=4:6, smooth, variable=\x, gray, line width = 2pt, ] plot( {\x}, {2*3^(5-\x)*12^(0.5*(\x-4))});
  \draw[domain=6:8, smooth, variable=\x, gray, line width = 2pt, ] plot( {\x}, {2*4^(7-\x)*20^(0.5*(\x-6))});
  \draw[->, domain=8:9, smooth, variable=\x, gray, line width = 2pt] plot( {\x}, {2*5^(9-\x)*30^(0.5*(\x-8))});

  \foreach \i in {1,...,8} {
  \draw[thick] (\i,0.1)--(\i,-0.1) node[below] {\small \i};
  }
  
  \foreach \i in {1,...,5} {
  \draw[thick] (0.1,2*\i)--(-0.1,2*\i) node[left] {\small \i};
  
  }

  \foreach \xx/\yy in {
0.125/0,0.25/0,0.375/0,0.5/0,0.625/0,0.75/0,0.875/0,1/0,1.125/1,1.25/1.09050773266526,1.375/1.18920711500272,1.5/1.29683955465101,1.625/1.4142135623731,1.75/1.54221082540794,1.875/1.68179283050743,2/0,2.125/1.68179283050743,2.25/1.68179283050743,2.375/1.80364994480514,2.5/1.80364994480514,2.625/1.93922744748686,2.75/1.92935725992062,2.875/2.07449200303565,3/0,3.125/2.10260610157792,3.25/2.06760849740343,3.375/2.20607542963356,3.5/2.14610795432031,3.625/2.32368559498247,3.75/2.25810086435323,3.875/2.42855627390301,4/0,4.125/2.44688123824024,4.25/2.39788041332331,4.375/2.55393489406298,4.5/2.47191109443687,4.625/2.64917927952036,4.75/2.58664729842216,4.875/2.74790757443717,5/0,5.125/2.78537415641279,5.25/2.70554118320307,5.375/2.88189434430224,5.5/2.76839073353734,5.625/2.96943988874475,5.75/2.88753395180784,5.875/3.0731718526452,6/0,
6.125/3.09598777785354,6.25/3.01066872838416,6.375/3.18486835879759,6.5/3.06463972602426,6.625/3.27826818287817,6.75/3.1781626981401,6.875/3.37286229390605,7/0,7.125/3.39780700389415,7.25/3.29861693398701,7.375/3.4930006722276,7.5/3.34370152488211
} {
\node[smallvtx,color=red] at (\xx,2*\yy) {};
}

  \foreach \xx/\yy in {
0.1/0,0.2/0,0.3/0,0.4/0,0.5/0,0.6/0,0.7/0,0.8/0,0.9/0,1/0,1.1/1,1.2/1.07177346253629,1.3/1.14869835499704,1.4/1.23114441334492,1.5/1.31950791077289,1.6/1.4142135623731,1.7/1.5157165665104,1.8/1.62450479271247,1.9/1.74110112659225,2/0,2.1/1.74110112659225,2.2/1.74110112659225,2.3/1.84131377177951,2.4/1.84131377177951,2.5/1.95123239964689,2.6/1.94328331572615,2.7/2.05937488080307,2.8/2.05359871048903,2.9/2.16370946833878,3/0,3.1/2.20066239386332,3.2/2.16119496097064,3.3/2.28136600536486,3.4/2.25513959860965,3.5/2.36529862980858,3.6/2.32936474365196,3.7/2.47054897849261,3.8/2.42359552517777,3.9/2.55508179079589,4/0,4.1/2.58026371149762,4.2/2.53559569722737,4.3/2.66558076002816,4.4/2.61824160011783,4.5/2.73909170152321,4.6/2.70279182650179,4.7/2.8332240864183,4.8/2.77861843932651,4.9/2.9207515958605,5/0,5.1/2.94653165774012,5.2/2.8906158168537,5.3/3.03151638352959,5.4/2.96904282949157,5.5/3.0966714013954,5.6/3.04645972095696,5.7/3.19172461686035,5.8/3.1292910660725,5.9/3.27951559836426
  } {
  \node[smallvtx, regular polygon, regular polygon sides = 3] at (\xx,2*\yy) {}; 
  }

\foreach \xx/\yy in {
0.0833333333333333/0,0.166666666666667/0,0.25/0,0.333333333333333/0,0.416666666666667/0,0.5/0,0.583333333333333/0,0.666666666666667/0,0.75/0,0.833333333333333/0,0.916666666666667/0,1/0,1.08333333333333/1,1.16666666666667/1.0594630943593,1.25/1.12246204830937,1.33333333333333/1.18920711500272,1.41666666666667/1.25992104989487,1.5/1.33483985417003,1.58333333333333/1.4142135623731,1.66666666666667/1.49830707687668,1.75/1.5874010519682,1.83333333333333/1.68179283050743,1.91666666666667/1.78179743628068,2/0,2.08333333333333/1.78179743628068,2.16666666666667/1.78179743628068,2.25/1.86685892529162,2.33333333333333/1.86685892529162,2.41666666666667/1.95927696034353,2.5/1.95262315155479,2.58333333333333/2.04935804934867,2.66666666666667/2.04456686607827,2.75/2.13552234767715,2.83333333333333/2.13611490693428,2.91666666666667/2.23274953285872,3/0,3.08333333333333/2.26167758484936,3.16666666666667/2.23921572662548,3.25/2.32776740256566,3.33333333333333/2.31191131520047,3.41666666666667/2.41770618641184,3.5/2.36983861395169,3.58333333333333/2.49420264469206,3.66666666666667/2.46117057693528,3.75/2.56262351674284,3.83333333333333/2.54032408556671,3.91666666666667/2.65485901422487,4/0,4.08333333333333/2.67528365969451,4.16666666666667/2.64666597689238,4.25/2.73337047488128,4.33333333333333/2.71003585392481,4.41666666666667/2.82232750886283,4.5/2.76768775907941,4.58333333333333/2.89484506990699,4.66666666666667/2.85467379091678,4.75/2.95374166057296,4.83333333333333/2.9311236475849,
4.91666666666667/3.04548936874046
} {
\node[smallvtx, diamond, color=green!70!black] at (\xx,2*\yy) {};
}

\foreach \xx/\yy in {
0.0666666666666667/0,0.133333333333333/0,0.2/0,0.266666666666667/0,0.333333333333333/0,0.4/0,0.466666666666667/0,0.533333333333333/0,0.6/0,0.666666666666667/0,0.733333333333333/0,0.8/0,0.866666666666667/0,0.933333333333333/0,1/0,1.06666666666667/1,1.13333333333333/1.04729412282063,1.2/1.09682497969463,1.26666666666667/1.14869835499703,1.33333333333333/1.20302503608212,1.4/1.25992104989487,1.46666666666667/1.31950791077289,1.53333333333333/1.38191287996778,1.6/1.44726923744038,1.66666666666667/1.5157165665104,1.73333333333333/1.5874010519682,1.8/1.66247579228558,1.86666666666667/1.74110112659225,1.93333333333333/1.82344497711643,2/0,2.06666666666667/1.82344497711643,2.13333333333333/1.82344497711643,2.2/1.89275847514723,2.26666666666667/1.89275847514723,2.33333333333333/1.96735468723647,2.4/1.96200787663483,2.46666666666667/2.03938993991828,2.53333333333333/2.03557474423504,2.6/2.10770242685581,2.66666666666667/2.10817028576041,2.73333333333333/2.18412758976766,2.8/2.17534185790977,2.86666666666667/2.25696481699308,2.93333333333333/2.25425881238082,3/0,3.06666666666667/2.26536141675006,3.13333333333333/2.33491968962057,3.2/2.32152348805779,3.26666666666667/2.39692819525829,3.33333333333333/2.39291906771735,3.4/2.45235858314497,3.46666666666667/2.45659337512517,3.53333333333333/2.52458078077428,3.6/2.51080187631274,3.66666666666667/2.58858862206258,3.73333333333333/2.58509404324536,3.8/2.64689540569071,3.86666666666667/2.64782591477319,3.93333333333333/2.72152225818849,4/0
} {
\node[smallvtx, star, star points = 5, color=blue!70] at (\xx,2*\yy) {};
}

\node[vtx, color=red] at (10,11) {};

\node[right] at (10.1,11) {\small $m=8$};

\node[vtx, regular polygon, regular polygon sides = 3] at (10,10.5) {};
\node[right] at (10.1,10.5) {\small $m=10$};

\node[vtx, color=green!70!black, diamond] at (10,10) {};
\node[right] at (10.1,10) {\small $m=12$};

\node[vtx, color=blue!70, star, star points = 5] at (10,9.5) {};
\node[right] at (10.1,9.5) {\small $m=15$};
\end{tikzpicture}

\caption{The relation between $f/m$ and $\left(\# \cK(f,m-1)\right)^{1/m}$ for $m=8$, $10$, $12$, $15$.}

\label{fig:graph2}
\end{figure}

\end{document}